\newtheorem{theorem}{Theorem}[section]
\newtheorem{lemma}[theorem]{Lemma}
\newtheorem{corollary}[theorem]{Corollary}
\newtheorem{prop}[theorem]{Proposition}
\newtheorem{example}{Example}[section]
\newcommand{\Gb}{\ensuremath{\mathrm{G}}}       
\newcommand{\Gc}{\ensuremath{\mathrm{G_0}}}       
\newcommand{\mmg}[2]{{\mbox{MMG}}{#1}{#2}}
\newcommand{\se}[1]{{\mbox{SE}} {#1}} 
\newcommand{\gl}[1]{{\mbox{GL}} {#1}}
\newcommand{\so}[1]{{\mbox{SO}} {#1}}
\newcommand{\su}[1]{{\mbox{SU}} {#1}}
\newcommand{\on}[1]{{\mbox{O}} {#1}}
\newcommand{\m}[1]{{\mbox{M}} {#1}}
\newcommand{\ad}[2]{{\mbox{Ad}}_{{#1}}{\left(#2\right)}}
\newcommand{\mse}[1]{{\mbox{MSE}} {\left( #1 \right) }}
\newcommand{\rd}[1]{{\operatorname{ROUND} } {#1}}
\newcommand{\E}[1]{\ensuremath{\mathbb{E}\left[#1\right] }}   
\newcommand{\R}{\ensuremath{\mathbb{R}}}
\newcommand{\C}{\ensuremath{\mathbb{C}}}
\newcommand{\eye}{\ensuremath{\mathrm{I}}}
\newcommand{\Rmnum}[1]{\expandafter\@slowromancap\romannumeral #1@}
\newcommand{\twopartdef}[4]
{
	\left\{
		\begin{array}{ll}
			#1 & \mbox{if } #2 \\
			#3 & \mbox{if } #4
		\end{array}
	\right.
}
\DeclareMathAlphabet{\mathcalligra}{T1}{calligra}{m}{n}
\DeclareFontFamily{OT1}{pzc}{}
\DeclareFontShape{OT1}{pzc}{m}{it}{<-> s * [1.10] pzcmi7t}{}
\DeclareMathAlphabet{\mathfrc}{OT1}{pzc}{m}{it}
\begin{document}

\title{Synchronization over Cartan motion groups via contraction}

\author[1]{Onur \"{O}zye\c{s}il \thanks{oozyesil@intechjanus.com}}
\author[2]{Nir Sharon\thanks{nsharon@math.princeton.edu} }
\author[3]{Amit Singer \thanks{amits@math.princeton.edu} }

\affil[1]{\footnotesize{INTECH Investment Management LLC, One Palmer Square, Suite 441, Princeton, NJ 08542, USA}}
\affil[2]{\footnotesize{Program in Applied and Computational Mathematics (PACM), Princeton University, Princeton, NJ 08544-1000}}
\affil[3]{\footnotesize{Department of Mathematics and PACM,  Princeton University, Princeton, NJ 08544-1000}}

\date{} 
\maketitle


\maketitle

\begin{abstract}
Group contraction is an algebraic map that relates two classes of Lie groups by a limiting process. We utilize this notion for the compactification of the class of Cartan motion groups, which includes the important special case of rigid motions. The compactification process is then applied to reduce a non-compact synchronization problem to a problem where the solution can be obtained by means of a unitary, faithful representation. We describe this method of synchronization via contraction in detail and analyze several important aspects of this application. We then show numerically the advantages of our approach compared to some current state-of-the-art synchronization methods on both synthetic and real data.
\end{abstract}

\textbf{Keywords:} Synchronization, Cartan motion groups, group contraction, special Euclidean group, matrix motion group.

\section{Introduction} \label{sec:intro}

The synchronization (also known as registration or averaging) problem over a group is to estimate $n$ unknown group elements $\left\{ g_i \right\}_{i=1}^n$ from a set of measurements ${g}_{ij}$, potentially partial and noisy, of their ratios $g_ig_j^{-1}$, $ 1 \le i < j \le n$. The solution of this problem is not unique as the data is invariant to right action of an arbitrary group element $g$ (also termed global alignment) since $g_ig_j^{-1} =(g_ig)(g_jg)^{-1}$.

When all measurements are exact and noiseless, it is possible to solve the above synchronization problem as follows. Define an undirected graph $ \mathcal{G} = (\mathcal{V},\mathcal{E})$ with the set of vertices $\mathcal{V}$ corresponding to group elements $\left\{g_i\right\}_{i=1}^n$ and the edges representing the measurements. Then, if the graph is connected the elements are determined by traversing a spanning tree, up to an arbitrary group element in the root of the tree. Nevertheless, in the presence of noise, this solution accumulates errors while traversing the spanning tree. Since measurement errors naturally arise in real world problems, such as in dynamical system data \cite{sonday2013noisy}, in networks time distribution \cite{giridhar2006distributed}, or in Cryo-EM imaging \cite{singer2011three}, we henceforth focus on addressing the synchronization problem with noisy measurements.

We roughly divide the synchronization problem into two main cases, corresponding to two types of groups: compact groups and non-compact groups. In the case of compact groups, there exist a faithful, finite, and orthogonal representation of the group (that is an injective map to orthogonal elements of a finite dimensional linear space). Therefore, the problem can be reduced to one in linear algebra. For example, in~\cite{singer2011angular}, two main approaches are suggested: to define a proper (kernel) matrix and extract its (top) eigenvectors, or to solve an appropriate optimization problem. An extensive survey is given in Subsection~\ref{subsec:synch_compact_group}. While the study of synchronization over compact group has attracted significant attention in the literature, with a special attention to the special orthogonal groups, e.g. \cite{boumal2013robust, wang2013exact}, the case of non-compact groups has not received as much attention. A fundamental difficulty in the non-compact case is that many of the methods for compact groups are no more valid in their original form as there is no faithful, orthogonal representation available in a finite linear space. In addition, the Riemannian structure is more complicated. 

One important class of non-compact groups is the Cartan motion groups. These groups have the structure of a semidirect product $K \ltimes V $, where $K$ is a compact group and $V$ is a linear space (and thus the non-compactness). An important special case from the application standpoint is the special Euclidean group, denoted by $\se(d)$, and consists of oriented rotations and translations in $d$-dimensional Euclidean space, that is $\se(d) = \so(d) \ltimes \mathbb{R}^{d} $. This group, also known as the group of rigid motions, is ubiquitous in various research fields, and problems related to synchronization over this group appear in many areas, for example in sensor network localization~\cite{cucuringu2012sensor,peters2015sensor}, object alignment in computer vision~\cite{bernard2015solution}, estimation of location and orientation of mobile robots \cite{rosen2015convex}, the fresco reconstruction problem~\cite{brown2008system} and more. 

Synchronization over $\se(d)$ has gained considerable attention lately, see e.g., \cite{peters2015sensor, rosen2015convex, tron2009distributed}. Most work is concentrated on applications in $d=2$ and $d=3$, but some also refer to general $d$. One such general approach is to form a solution separately in the rotational part and the translational part of the data, see e.g. \cite{jiang2013global}. The linear part in those approaches is solved by least squares or consistency search. The consistency of the data in synchronization is the relation $g_{ij}g_{j\ell} = g_{i\ell}$ for noiseless measurements. Other solutions for arbitrary $d$ generalize the approaches in~\cite{singer2011angular}, either directly based on spectral approach \cite{arrigoni2015spectral, bernard2015solution} or by using modified optimization formulations and different relaxations to solve them, e.g., \cite{ozyesil2015stable}. There is a variety of solutions for the cases $d=2$ and $d=3$, as rigid motion synchronization can facilitate solving several classical problems such as Structure from Motion (SfM) in computer vision and pose graph optimization in robotics. Furthermore, some mathematical relations are only valid for these cases of low dimension, for example, the tight connection between $\se(3)$ and dual quaternions, which is used for synchronization in \cite{torsello2011multiview} to solve registration problems in computer graphics. A more comprehensive survey, including these cases and more, is given in Subsection~\ref{subsec:available_solutions_se_group}. Our approach to addressing the problem of synchronization over $\se(d)$ stems from the algebraic structure of Cartan motion groups and their relation to compact groups. Specifically, we use the notion of group contraction.  

The seminal paper~\cite{inonu1953contraction} introduced the notion of contraction of a Lie group. The motivation was to link two theories in physics which are related via a limiting process. The results show algebraically how the associated groups (and their representations) are linked, for example, the Galilean group and the Poincar\'e group. We use a specific family of contraction maps, presented in~\cite{dooley1983contractions}, which is based on Cartan decomposition of groups, and relates a group to its associated Cartan motion group. For example, in the case of the special Euclidean group, this contraction mapping acts from $\se(d)$ to $\so(d+1)$. These maps facilitate rewriting the synchronization over the Cartan motion group in terms of synchronization problem in its associated group. When the associated group is compact it means one can approximate the solutions of the original synchronization by solving synchronization over the compact group using any existing tools available. The procedure of reducing the problem by means of compact groups is also termed compactification.

The compactification implies a solution in the compact domain, which needs then to be mapped back to the original non-compact domain. The invertibility is not the only requirement on the map as we also want to guarantee the quality of the solution. To this end, we impose an additional algebraic condition on the maps which we term approximated homomorphism. This condition enables to relate the two problems, the compact and the non-compact one. Also, it gives rise to other variants for compactification; we provide one such example for the special case of $\se(d)$, where we use an invertible map for projecting data from $\se(d)$ to $\so(d+1)$. In contrary to contractions which are based on group level decomposition, this variant is based on a matrix decomposition over the group representations. 

Our main focus is on the application of the compactification methods to synchronization. Thus, we present a full, detailed algorithm for synchronization via contraction and analyze several important aspects of it. One such aspect is the invariance of synchronization to global alignment. Apparently, this invariant does not directly pass from the compact synchronization to the non-compact problem. We accurately characterize the correct invariance that the two problems share, and based on that, suggest a method to choose the right global alignment in the solution of the compact synchronization for maximizing the quality of solution in the non-compact side, after back mapping the solution. We also provide some analysis for the case of noisy data. In particular, we show a condition that guarantees, under certain assumptions on the noise, that meaningful information can be extracted to form a solution for synchronization over data contaminated with noise.

We conclude the paper with a numerical part, where we discuss the implementation of our method and compare it numerically with several other methods for synchronization. The examples are performed over two groups: the group of rigid motions and matrix motion group. We provide numerical examples for synthetic data where the noise model consists of multiplicative noise, outliers, and missing data. The results show that the contraction approach, which presents great flexibility in choosing the solver for synchronization over the compact group of rotations, leads to superior performance in almost any tested scenario. A second part is devoted to real data, coming from the problem of 3D registration, that is to estimate a 3D object from raw partial scans of it. This problem presents a realistic mix of noise and missing data and provides a solution that can be evaluated visually. For two different objects, in three different scenarios, synchronization via contraction exhibits superior performance. Furthermore, these results demonstrate that contraction in the group level outperforms contraction in the matrix level, and so justifies the use of algebraic structures.

The paper is organized as follows. In Section~\ref{sec:SyncBackground} we provide a survey on synchronization over groups, including the case of $\se(d)$. Then, in Section~\ref{sec:contraction} we introduce the Cartan motion groups and the notion of group contraction. We also provide a geometrical interpretation in low dimension and a linear variant of compactification on the matrix level. Section~\ref{sec:ApplicationForSync} presents how to apply compactification for solving synchronization and its analysis. In Section~\ref{sec:numerical_examples} we show numerical examples, starting with conventions about measuring error and noise and how to address several implementation issues such as parameter selection. Finally, we numerically compare the performances of various algorithms for synchronization using synthetic and real data. 

\section{Synchronization over groups -- background} \label{sec:SyncBackground}

We provide some background on the synchronization problem over groups. This problem is to estimate $n$ unknown elements $\left\{ g_i \right\}_{i=1}^n$  from a group $G$, given a set of ratio measurements $\{ {g}_{ij} \}_{1 \le i < j \le n} $, $g_{ij} \approx g_ig_j^{-1}$. This set might include only partial collection of measurements, outliers, and noisy ratio samples. As above, we denote the undirected graph of measurements by $\mathcal{G} = (\mathcal{V},\mathcal{E})$, where any edge $\left(i,j\right)$ in $\mathcal{E}$ corresponds to $g_{ij}$ and the set of vertices $\mathcal{V}$ corresponds to the unknown group elements $\left\{g_i\right\}_{i=1}^n$. We also assign weights $w_{ij}\ge 0$ to edges, where in a naive model these weights indicate whether $\left(i,j\right)\in \mathcal{E}$, that is $1$ if such edge exists and $0$ otherwise. In other more advanced models, these weights can be correlated with the noise or suggest our confidence in the measurements. 

\subsection{Synchronization over compact groups: a short survey and an example}  \label{subsec:synch_compact_group} 

The basic assumption in this part is that $G$ is a compact group, and thus having a faithful, finite dimensional and orthogonal (or unitary, for the complex case) representation $\rho \colon  G \rightarrow \gl(d,\R)$. Henceforth, we focus on the real case since the complex case (with representations in $ \gl({d,\C})$) is essentially similar. Having such $\rho$ means that solving synchronization over $G$ is equivalent to solving synchronization over the group of orthogonal transformations $\on(d)$, and we specifically focus on the case of $\so(d)$. 

The problem of synchronization of rotations is widely studied, since it naturally arises in many robotics (motion) applications and as a crucial step in solving the classical computer vision problem of Structure from Motion (SfM). A comprehensive survey about robust rotation optimization in SfM applications is done in \cite{tron2016survey}. Another extensive survey is done in \cite{carlone2015initialization} within the context of pose graph optimization (estimating the positions and orientations of a mobile robot). Since we ultimately plan to solve synchronization over some non-compact groups by mapping it into compact groups and solve it there, we shortly state several approaches for directly solving the synchronization of rotations. 

We begin with an example of a method that globally integrates the data of synchronization to obtain a solution. The full analysis of this method is given in \cite{boumal2014thesis, singer2011angular}.
\begin{example}[The spectral method] \label{exmple:EigenvectorsMethod}
The solution is constructed based upon the spectral structure of a specially designed (kernel) matrix $H$, defined as follows. Let $M$ be a $dn \times dn$ block matrix, consisting of the $d\times d$ blocks 
\begin{equation}   \label{eq:MeasurementMatrixM}
M_{ij} = \twopartdef { w_{ij}\rho \left( {g}_{ij} \right) } {(i,j) \in \mathcal{E}} {\mathbf{0}_{d\times d}} {(i,j) \notin \mathcal{E} . }
\end{equation}
Set $w_{ii}=1$ and $M_{ii} = \eye_d$, for all $1 \le i \le n$ and for consistency $M_{ji} =  M_{ij}^{T}$ since $\rho$ is orthogonal and so 
\[ (g_ig_j^{-1})^{-1} = (g_ig_j^{T})^{T} = g_jg_i^{T} = g_jg_i^{-1} .\] 
Namely, the orthogonality of our representation $\rho$ and the fact that $w_{ij}=w_{ji}$ make $M$ a self adjoint matrix. Denote by $\deg(i) = \sum_{j : (i,j)\in E} w_{ij} >0$ the total sum of edge weights from the $i^\text{th}$ vertex (the degree in $\mathcal{G}$) and define a diagonal $dn\times dn$ matrix $D$ where each $d\times d$ diagonal block $D_i$ is given by $D_i = \deg(i) \eye_d$. The kernel matrix is $H=D^{-1}M$.  

The fundamental observation of the spectral method is that in the noise-free case we get
\begin{equation}
\label{eq:NoiseFreeSynchronization}
M\mu = D\mu , \quad  \mu = \left[\begin{smallmatrix} \rho(g_1) \\ \vdots \\ \rho(g_n) \end{smallmatrix}\right]  .
\end{equation}
Namely, the columns of $\mu$ are $d$ top eigenvectors of $H$, corresponding to the eigenvalue $1$. In addition, based on the non-negativity of the weights $w_{ij}$, the matrix $H$ is similar to the Kronecker product of the normalized Laplacian matrix of the graph and the identity matrix $I_d$, using the conjugate matrix $D^{-1}A$, with $A$ being a block diagonal matrix with the block $\rho(g_i)$, $i=1,\ldots,n$. The last step in the spectral approach is a rounding procedure, since in the noisy case there is no guarantee that each $\mu_i$ is in the image of $\rho$. For a pseudocode see Algorithm~\ref{alg:eigenvector_method} on supplementary material in Section~\ref{app:Supplements}.
\end{example}

As seen in Example~\ref{exmple:EigenvectorsMethod} the structure of the data suggests a spectral solution. Yet a different way to integrate the data globally is by solving a semidefinite programming (SDP) problem, e.g., \cite{arie2012global, saunderson2014semidefinite, saunderson2015semidefinite, singer2011angular}. Global integration can be also exploited for the case of outliers in the data, where the cost function is tailored to be more robust for inconsistent data \cite{hartley2011l1, wang2013exact}. It is also possible to attain robustness using maximum likelihood approach \cite{boumal2013robust}. Another closely related approach is to use the Riemannian structure of the manifold of rotations to define iterative procedures, e.g., \cite{chatterjee2013efficient, hartley2013rotation, moakher2002means}. A different approach is related to the cyclic features of the data. Specifically, the consistency of the data states that for noiseless measurements $g_{ij}g_{j\ell} = g_{i\ell}$. This information can be used to either form a solution or to discard outliers, see e.g., \cite{sharp2004multiview, zach2010disambiguating}. Many other approaches have been designed for low dimensional rotations, for example with $d=3$ one can exploit the relation between rotations and quaternions, e.g., \cite{govindu2001combining}. These low dimensional cases are important as they appear in many applications. A further popular approach for practitioners is to mix between several methods to have a superior unified algorithm. These topics are well covered in the abovementioned surveys \cite{carlone2015initialization,tron2016survey}.

\subsection{An overview of synchronization over the special Euclidean group} \label{subsec:available_solutions_se_group}

In the main focus of this paper is the class of Cartan motion groups. This class of groups, which will be precisely defined in the sequel section, includes the important group of rigid motions, that is the special Euclidean group $G = \se(d)$ of isometries that preserve orientation. We naturally identify $\se(d)$ by the semidirect product $\so(d)\ltimes \R^d$ and multiplication is given by 
\begin{equation} \label{eqn:SE_action}
\left(\mu_1,b_1\right)\left(\mu_2,b_2\right) = \left(\mu_1 \mu_2, b_1+\mu_1 b_2 \right) , \quad \left(\mu_1,b_1\right),\left(\mu_2,b_2\right) \in \so(d)\ltimes \R^d .
\end{equation}
The above description is also reflected by the $d+1$ dimensional representation $\rho : \se(d) \rightarrow \gl(d+1,\R)$, 
\begin{equation} \label{eqn:representationSEd}
\rho \left( g \right) = \left[ \begin{smallmatrix} \mu & b \\ \mathbf{0}_{1\times d} & 1 \end{smallmatrix}\right] , \quad g = \left(\mu,b\right) \in \mbox{SO}(d)\ltimes \R^d . 
\end{equation}      
Henceforth, we will identify the group elements with their matrix representation \eqref{eqn:representationSEd}. 

Next we briefly survey some of the main approaches used today to solve the so called rigid motion synchronization. A fundamental issue is the more intricate Riemannian structure. This leads, for example, to extra effort in working with the exponential and logarithm maps. Another issue with synchronization over non-compact group, and in particular in rigid motion synchronization, is that there is no faithful, finite dimensional and orthogonal representation. This fact complicates most methods for rotation synchronization. For example, for the spectral method of Example~\ref{exmple:EigenvectorsMethod}, the affinity matrix $M$ defined using \eqref{eqn:representationSEd} is no longer self adjoint, which makes the spectral structure of the kernel matrix less accessible. Nevertheless, there is a way to address this difficulty, by considering a Laplacian operator and then to reveal its null space using the SVD decomposition. This spectral method is studied both in \cite{arrigoni2015spectral} and in \cite{bernard2015solution}, where it was shown to be an efficient and accurate method.

Optimization also plays a significant role in solving rigid motion synchronization. Indeed, the search space is non-compact and exponential in $n$, and thus it requires either relaxing or reducing the original problem into a more tractable one. A way to do so is by separating the optimization over the compact and non-compact parts of the group, e.g., \cite{jiang2013global}. One different motivation to use separation appears when considering a quadratic loss function, taken from the compact group case \cite{singer2011angular}
\begin{equation}
\label{eq:NonCompactOptimPrb1}
\begin{aligned}
& \underset{{\scriptstyle\{g_i\}_{i=1}^n} \subset G}{\text{minimize}}
& & \sum_{(i,j)\in \mathcal{E}} w_{ij} \left \| \rho(g_i g_j^{-1}) - \rho(g_{ij}) \right \|_F^2 , \\
\end{aligned}
\end{equation}
where $\norm{\cdot}_F$ stands for the Frobenius norm of a matrix. Using the representation \eqref{eqn:representationSEd} we can rewrite~(\ref{eq:NonCompactOptimPrb1}) as
\begin{equation} \label{eqn:NonCompactProbSimplified}
\begin{aligned}
& \underset{{\scriptstyle\{{\mu}_i, {b}_i\}_{i=1}^n}}{\text{minimize}}
& & \sum_{(i,j)\in \mathcal{E}} w_{ij} \left \| {\mu}_i {\mu}_j^T - {\mu}_{ij} \right \|_F^2 + \sum_{(i,j)\in \mathcal{E}} w_{ij} \left \| {b}_i - {\mu}_i {\mu}_{j}^T {b}_j -  {b}_{ij} \right \|_2^2 \\
& \text{subject to}
& & {\mu}_i^T{\mu}_i = \eye_d, \quad \det ({\mu}_i) = 1, \quad {b}_i\in \R^d,\quad i = 1,\ldots,n .
\end{aligned} 
\end{equation}
The general approach of separation suggests to first minimize the first sum, using for example the spectral method, and then solve the least squares problem of the second sum with the estimated $\{\mu_i\}_{i=1}^n$, as done in \cite{cucuringu2012sensor}. Since the representation of $\se(d)$ is not orthogonal, there is no canonical way of fixing the cost function in the rotations case. In fact, changing the cost function of \eqref{eq:NonCompactOptimPrb1} leads to several interesting solutions. One naive change is to consider 
\begin{equation} \label{eqn:SeparetedCostFunction}
\sum_{(i,j)\in \mathcal{E}} w_{ij} \left \| \rho(g_i^{-1} g_{ij} g_j) - \eye_{d+1} \right \|_F^2 , 
\end{equation}
which yields to the completely separated cost function 
\begin{equation}  \label{eqn:SeparetedCostFunctio2}
\sum_{(i,j)\in \mathcal{E}} w_{ij} \left \| {\mu}_i{\mu}_j^T  - {\mu}_{ij} \right \|_F^2 + \sum_{(i,j)\in \mathcal{E}} w_{ij} \left \| {b}_i - {\mu}_{ij}{b}_j -  {b}_{ij} \right \|_2^2  .
\end{equation}
In this case the separated cost function has an undesirable property -- it is not invariant to right multiplication by an arbitrary group element. The loss of invariance is clear directly from \eqref{eqn:SeparetedCostFunction} but also in \eqref{eqn:SeparetedCostFunctio2} as the translational part of such global alignment results in additional terms in the summand. However, this solution is potentially more suitable for problems where we need to introduce extra constraints for translation parts (the $b_i$-s), e.g. in the fresco reconstruction problem, where one can introduce extra constraints to prevent overlapping of the fresco fragments \cite{funkhouser2011learning}. Another variant is the least unsquared deviations (LUD) which uses 
\[ \sum_{(i,j)\in \mathcal{E}} w_{ij} \left \| \rho(g_i g_j^{-1}) - \rho(g_{ij}) \right \|_F .\]
This is done in \cite{wang2013exact} to face the problem of outliers. Yet another cost function is
\[  \sum_{(i,j)\in \mathcal{E}} w_{ij} \left \| \rho(g_i) - \rho(g_{ij}g_j) \right \|_F^2 =  \tr \left( \mu^T \mathcal{L} \mu \right) , \]  
where $\mu$ is the vectors of group elements (matrices) as in \eqref{eq:NoiseFreeSynchronization}, and $\mathcal{L} = (I-M)(I-M^T)$ where $M$ is the measurements matrix as defined in \eqref{eq:MeasurementMatrixM}. The matrix $\mathcal{L} $ can be also considered as an extension of the ``twisted Laplacian", as defined in~\cite{mantuano2007discretization}. This cost functions brings us back to the spectral method, as the operator $\mathcal{L}$ is  is related to the Laplacian operator in \cite{arrigoni2015spectral} and \cite{bernard2015solution}. Note that both papers \cite{arrigoni2015spectral, bernard2015solution} focus on the case $d=3$ and relax the constraint $\mu_i \in \rho\left(G  \right)$ by forcing the solution $\mu$ to coincide with $\left(0,0,0,1\right)$ in each fourth row and then project the upper left block of each $\mu_i$ on $\so(3)$. Another interesting relation, this time with the ``Connection Laplacian" is established in \cite{briales2017cartan}, where a Maximum Likelihood Estimation (MLE) is constructed for synchronization over $\so(d)$.

As in rotation synchronization, there are a few additional approaches, motivated by the SfM problem. Similar to the above spectral and optimization methods, the following solutions do not use separation, and integrate the data concurrent in both parts of the rigid motion. One method solves the optimization problem related to the SfM with rigid motions by using semidefinite relaxation (SDR) \cite{ozyesil2015stable}. Relaxing the constraints of $\se(3)$ by its convex hull was considered in \cite{rosen2015convex}. Note that some of the solutions related to the SfM problem are specificaly designed to operate under the assumption that $d=3$, and their generalizations to higher dimensions are not always trivial, if possible; one example of such case is in \cite{torsello2011multiview} where the authors use the unique relation between $\se(3)$ and quaternions, in order to use diffusion tools on the graph. The explicit structure of the exponential map and its inverse for $\se(3)$ is also utilized in \cite{govindu2004lie} to define an iterative averaging algorithm. The same explicit form of the exponential map helps in defining the cyclic constraints that lead to an optimized solution in \cite{peters2015sensor}. Iterative optimization that also exploits the graph structure is done in \cite{tron2009distributed} by a distributed algorithm for sensor network orientation. Riemannian gradient descent adapted for $\se(3)$ is also used for rigid motion synchronization under two different variants in \cite{tron2014statistical, tron2014distributed}.
 
\section{Compactification of Cartan motion groups}   \label{sec:contraction} 
 
We introduce the class of Cartan motion groups and then describe the notion of group contraction and compactification, including two specific examples of compactifications in the group level and in the matrix level.

\subsection{Cartan motion groups}

We start with a brief introduction to Cartan decomposition and its induced Cartan motion groups. This prefatory description is tailored for our ultimate goal of addressing the problem of synchronization for such groups via contraction maps. For more information see \cite[Chapter 1.2]{vilenkin2012representation}, \cite{dooley1985contractions}, and references therein.

Let $\Gb$ be a semisimple compact Lie group. It is worth mentioning that algebraically, most of the following can be carried for non-compact groups with finite center instead of $\Gb$, but this case is not that useful for us as will be clear soon. Let $\mathfrc{g}$ be the corresponding Lie algebra of $\Gb$, in the compact case we refer to $\mathfrc{g}$ as a real form of the complexification of the Lie algebra. Then, there is a Cartan decomposition $\mathfrc{g} = \mathfrc{t} \oplus \mathfrc{p}$ such that for an associated Cartan involution, $\mathfrc{t}$ and $\mathfrc{p}$ are the eigenspaces corresponding to eigenvalues of $+1$ and $-1$, respectively. As such, the linear space $\mathfrc{t}$ satisfies $\comm{\mathfrc{t}}{\mathfrc{t} } \subset \mathfrc{t}$ and thus is a Lie subalgebra, where in general $\mathfrc{p}$ is just a linear space. We denote by $K$ the Lie subgroup corresponding to the subalgebra $\mathfrc{t}$ and restrict the discussion to the real case, so we define $V=\mathfrc{p}$ to obtain the Cartan motion group, 
\[ \Gc = K \ltimes V . \] 
The action of the group is 
\begin{equation} \label{eqn:Cartan_action}
 (k_1,v_1)(k_2,v_2) = \left(k_1k_2,v_1+\ad{k_1}{v_2} \right) , 
\end{equation}
where $\ad{\cdot}{\cdot}$ refers to the adjoint representation on $\Gc$. Note that $\mathfrc{p}$ is closed under the adjoint, that is $\ad{k}{v} \in \mathfrc{p}$ for all $k\in K$ and $v \in \mathfrc{p}$. Before proceeding, we provide some examples for the above settings. 
\begin{example}[Euclidean group] \label{exmple:SE}
Let $\Gb = \so(d+1)$ be the group of all orthogonal matrices of order $d+1$ with positive determinant. This Lie group has the Lie algebra $\mathfrc{g}$ consists of all real square matrices of size $d+1$, which are skew symmetric, $X+X^T = 0$. To form the decomposition above, consider the matrices $E(i,j)$, having $1$ on the $(i,j)$ entry and zero otherwise. Thus, $\{ E(i,j)- E(j,i) \}_{1\le i <j \le d+1}$ is a basis of $\mathfrc{g}$. The span of $\{ E(i,j)- E(j,i) \}_{1\le i <j \le d}$ forms the Lie algebra $\mathfrc{t}$, which is isomorphic to the Lie algebra of $\so(d)$. Indeed, we can identify $\so(d)$ with $K$, the subgroup of $\Gb$ of all matrices having $e_{d+1}$ as their last column, where $e_i$ is the unit vector that has one on the $i^\text{th}$ component. The complement of $\mathfrc{t}$ is $\mathfrc{p}$, which is spanned by $\{ E(i,d+1)-E(d+1,i) \}_{1\le i \le  d}$. The linear space $\mathfrc{p}$ is easily identified with $\mathbb{R}^d$ via the map 
\begin{equation} \label{eqn:anti_sym_map_to_Euclidean}
E(i,d+1)-E(d+1,i) \mapsto e_i . 
\end{equation}
Thus, we end up with the Cartan motion group $\so(d) \ltimes \mathbb{R}^d$, that is $\se(d)$. More on this example can be found in \cite{dooley1983contractions} and in the next section.
\end{example}
\begin{example}
Consider $\Gb=\su(d)$, the group of all unitary matrices with determinant equal to one. Although its definition relies on complex matrices, it is common to consider it as a real Lie group. There are three main types of Cartan decompositions, we mention one that corresponds with the Cartan involution of complex conjugation. As such, it divides the Lie algebra $\{ X \mid X+X^\ast=0,\quad \tr(X)=0\} $ to a real part, which is the same Lie algebra of $\so(d)$, consisting of all skew symmetric matrices. Its orthogonal complement, denoted by $W = \so(d)^\perp$, can be described as the direct sum of a $d-1$ dimensional subspace of pure complex, diagonal matrices with zero trace, and a $\frac{d(d-1)}{2}$ dimensional subspace of pure complex matrices of the form $X_{i,j}=X_{j,i}$, with zero diagonal. Then, the Cartan Motion group in this case is $\so(d) \ltimes W$.
\end{example}
\begin{example}[matrix motion group] \label{exm:MMG}
Let $\Gb=\on(d+\ell)$ be the group of all real orthogonal matrices of order $d+\ell$. Denote by $\m(d,\ell)$ the space of all real matrices of order $d \times \ell$. Then, one Cartan decomposition of $\mathfrc{g}$ can be identified with the product of the Lie algebras of $\on(d)$ and $\on(\ell)$ (namely $\mathfrc{t}$) and the space $\mathfrc{p} = \m(d,\ell)$. This decomposition yields the so called matrix motion group, $\Gc = \left(\on(d) \times \on(\ell) \right) \ltimes\m(d,\ell)$. This particular example draws some attention as this Cartan motion group is associated with the quotation space $\Gb/K$ which in this case is the Grassmannian manifold, consisting of all $d$ dimensional linear subspaces of $\mathbb{R}^{d+\ell}$. For more details see \cite[Pages 107--128]{olafsson2008radon}. We return to this example in Subsection~\ref{subsec:MMG} where we describe synchronization over this group and some of its practical considerations.
\end{example}

\subsection{Group contraction map} \label{subsec:group_contraction}

Lie group contraction is a process of obtaining one Lie group from a sequence of other Lie groups. This concept was developed primary in the seminal paper~\cite{inonu1953contraction} to describe the relation between groups of two physical theories linked by a limiting process (for example, the inhomogeneous Lorentz group is a limiting case of the de Sitter groups, more details on the theory of contraction can be found in e.g., \cite{gilmore2012lie}). In this paper we focus on one family of contraction maps, established in~\cite{dooley1983contractions,dooley1985contractions}, for semisimple Lie groups. 

Following~\cite{dooley1983contractions} we concentrate on the family of contraction maps $\{ \Psi_{\lambda} \}_{\lambda>0}$, which are smooth maps $\Psi_{\lambda} \colon \Gc \mapsto \Gb$ and defined by
\begin{equation} \label{eqn:cartan_contraction}
\Psi_{\lambda}(k,v) = \exp(v/\lambda) k , \quad (k,v) \in \Gc
\end{equation}
where the exponential is the Lie group exponential map of $\Gb$ (note that indeed $v/\lambda \in \mathfrc{g} $). The group $\Gc$ is commonly referred to as a contraction of $\Gb$. 

The parameter $\lambda$ in~\eqref{eqn:cartan_contraction} has a major role as it reflects, in a sense, how much significant information from $\mathfrc{p} $ passes to $\Gb$. In the extreme case of $\lambda \rightarrow \infty$, the contraction map $\Psi_{\lambda}$ simply maps the group $K$ to itself as a subgroup of $\Gb$. In the opposite scale of $\lambda$ where it is closer to zero, the map becomes less practical for two main reasons. First, the vector $v/\lambda$ is likely to be very large and lies outside the injectivity radius of the exponential map of $\Gb$, namely the exponential map is no longer a diffeomorphism. This means, for example, that we cannot apply the (correct) inverse of $\Psi_{\lambda}$. For synchronization, the ability to apply the inverse is crucial. Second, the exponential map depends on the curvature of the Lie group manifold (given algebraically by a commutator term). Thus, small $\lambda$ values lead to larger vectors which are more exposed to the effects of curvature and result in large distortions of the mapped elements. Subsequently, and without loss of generality we consider $\lambda$ to be restricted in $[1,\infty)$. Note that the global Cartan decomposition of the group $\Gb$ is defined by \eqref{eqn:cartan_contraction} with $\lambda=1$. 

The first two algebraic properties which we require from the contractions are 
\begin{equation}  \label{eqn:app_homo}
  \begin{aligned}  
 \Psi_{\lambda}\left(g^{-1}\right) = \left(\Psi_{\lambda}(g)\right)^{-1} , \qquad g \in \Gc , \\
 \norm{ \Psi_{\lambda}(g_1g_2) - \Psi_{\lambda}(g_1)\Psi_{\lambda}(g_2) }_F = \mathcal{O}\left( \frac{1}{\lambda^2} \right) \ , \quad g_1,g_2 \in \Gc ,
  \end{aligned}
\end{equation}
where $\mathcal{O}(\cdot)$ means that $\mathcal{O}(x)/x$ is bounded for small enough $x$, and the group elements of $\Gb$ are identified with their orthogonal matrix representation. In addition, the coefficient of the term $\frac{1}{\lambda^2}$ of the second equation is independent of $\lambda$, but (as we will see next) can be a function of $g_1$ and $g_2$. We term the properties in \eqref{eqn:app_homo} ``approximated homomorphism". Note that while we allow some approximation on the homomorphism of the action of the group, we require exactness in the inverse. For our family of contraction maps, these requirements are satisfied.
\begin{prop} \label{prop:app_homo_contraction}
Let $v_1$ and $v_2$ be the translational parts of $g_1$ and $g_2$. In addition, assume $\lambda$ is large enough so $\norm{v_1}+\norm{v_2} \le \lambda r$, with $r = 0.59 $. Then, the contractions defined in~\eqref{eqn:cartan_contraction} are approximated homomorphisms.
\end{prop}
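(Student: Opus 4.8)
The plan is to treat the two requirements of \eqref{eqn:app_homo} separately, establishing the inverse relation as an exact identity and the homomorphism defect as a second-order perturbation governed by a single commutator. Throughout I would lean on one structural fact: the equivariance of the exponential map of $\Gb$, $\exp(\ad{g}{X}) = g\exp(X)g^{-1}$, which holds because conjugation commutes with the matrix power series.

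For the first identity I would start from the group law \eqref{eqn:Cartan_action}, which gives $(k,v)^{-1} = \left(k^{-1}, -\ad{k^{-1}}{v}\right)$. Applying the contraction yields $\psilambda{(k,v)^{-1}} = \exp\!\left(-\ad{k^{-1}}{v}/\lambda\right)k^{-1}$, while $\left(\psilambda{(k,v)}\right)^{-1} = k^{-1}\exp(-v/\lambda)$. The two agree by equivariance applied with $g=k^{-1}$ and $X=-v/\lambda$, since $\exp\!\left(-\ad{k^{-1}}{v}/\lambda\right) = k^{-1}\exp(-v/\lambda)k$, and the trailing $k$ cancels the $k^{-1}$. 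Both sides reduce to $k^{-1}\exp(-v/\lambda)$. This step is purely algebraic and needs no restriction on $\lambda$ or on $\norm{v}$.

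For the homomorphism defect I would first bring the two words into a directly comparable form. Writing $g_i=(k_i,v_i)$, the law \eqref{eqn:Cartan_action} gives $\psilambda{g_1 g_2} = \exp\!\left((v_1+\ad{k_1}{v_2})/\lambda\right)k_1 k_2$, whereas $\psilambda{g_1}\psilambda{g_2} = \exp(v_1/\lambda)\,k_1\exp(v_2/\lambda)\,k_2$. Pushing $k_1$ through the second exponential with the same equivariance identity turns the latter into $\exp(v_1/\lambda)\exp(\ad{k_1}{v_2}/\lambda)\,k_1 k_2$. Since $k_1 k_2$ is represented orthogonally and the Frobenius norm is invariant under right multiplication by an orthogonal matrix, the defect collapses to $\norm{\exp(A+B)-\exp(A)\exp(B)}_F$ with $A=v_1/\lambda$ and $B=\ad{k_1}{v_2}/\lambda$. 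Because $\ad{k_1}{\cdot}$ acts orthogonally on $\mathfrc{p}$, one has $\norm{B}=\norm{v_2}/\lambda$, so $A$ and $B$ are both of size $\mathcal{O}(1/\lambda)$.

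The remaining estimate is the heart of the matter. Expanding the two matrix exponentials, the zeroth- and first-order terms cancel identically and the quadratic terms leave exactly $-\tfrac12\comm{A}{B} = -\tfrac{1}{2\lambda^2}\comm{v_1}{\ad{k_1}{v_2}}$; this is the announced $\mathcal{O}(1/\lambda^2)$ leading term, with coefficient depending only on $v_1,v_2$. The main obstacle is controlling the tail $\sum_{n\ge 3}$ uniformly, so that the full defect is genuinely bounded by $C(g_1,g_2)/\lambda^2$ with $C$ independent of $\lambda$. I would bound the $n$-th order term of $\exp(A+B)-\exp(A)\exp(B)$ in a submultiplicative norm by $2(\norm{A}+\norm{B})^n/n!$, factor out $1/\lambda^2$, and use $\lambda\ge 1$ together with the standing hypothesis $\norm{v_1}+\norm{v_2}\le\lambda r$. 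The constant $r=0.59$ enters precisely here: keeping $\norm{A}+\norm{B}\le r$ safely below the Baker--Campbell--Hausdorff radius $\log 2\approx 0.693$ guarantees that the relevant series converge and sum to a finite, $\lambda$-independent coefficient, and it simultaneously keeps $(v_1+\ad{k_1}{v_2})/\lambda$ within the injectivity radius so that the inverse map used elsewhere is well defined. Carefully tracking the conversion between the vector norm on $\mathfrc{p}$ and the matrix norm used in the series bound is the only genuinely delicate bookkeeping.
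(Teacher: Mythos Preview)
Your argument is correct, and the reduction to $\norm{\exp(A+B)-\exp(A)\exp(B)}_F$ with $A=v_1/\lambda$, $B=\ad{k_1}{v_2}/\lambda$ matches the paper exactly. Where you diverge is in the final estimate: the paper invokes the Zassenhaus product expansion $\exp(A+B)=\exp(A)\exp(B)\exp(-\tfrac12[A,B])\cdots$ and appeals to the convergence result of Casas et al.\ \cite{casas2012efficient}, which is where the specific value $r\approx 0.59$ originates---it is the known convergence radius of the Zassenhaus product, not the BCH radius $\log 2$. Your direct Taylor expansion is more elementary and, as you essentially observe, the tail bound $\sum_{n\ge 3}2(\norm{A}+\norm{B})^n/n!$ is controlled by $(\norm{A}+\norm{B})^2$ times an absolutely convergent series for \emph{any} bound on $\norm{A}+\norm{B}$; in your approach the particular constant $0.59$ plays no essential role, and neither BCH convergence nor injectivity of the exponential is actually needed for this estimate. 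So your route is arguably cleaner here, but your explanation of why $r=0.59$ appears is not the paper's reason.
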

\begin{proof}
Let $g=(k,v)\in \Gc$, then by~\eqref{eqn:Cartan_action} we have $g^{-1} = (k^{-1},\ad{k^{-1}}{-v})$. Therefore, using the linearity of the adjoint and its commutativity with the exponent we have
\[ \begin{aligned}[t]
	\Psi_{\lambda}\left(g^{-1}\right)   &=  \exp(\ad{k^{-1}}{-v}/\lambda)k^{-1}    \\
            &=\ad{k^{-1}}{\exp(-v/\lambda)}k^{-1}  \\
	        &= k^{-1} \exp(-v/\lambda) = \left(\exp(v/\lambda))k\right)^{-1} = \left(\Psi_{\lambda}(g)\right)^{-1} . \\
      \end{aligned} . \]
For the second part, let $g_1=(k_1,v_1),g_2=(k_2,v_2) \in \Gc$. By~\eqref{eqn:Cartan_action}
\[  \norm{ \Psi_{\lambda}(g_1g_2) - \Psi_{\lambda}(g_1)\Psi_{\lambda}(g_2) }_F = 
\norm{\exp(v_1+\ad{k_1}{v_2}/\lambda)) k_1k_2 - \exp( v_1/\lambda)k_1 \exp(v_2/\lambda)k_2}_F .  \]
Since we identify each component with its orthogonal representation in $\Gb$, we can easily cancel $k_2$ from the right hand side. Moreover, by multiplying by $k_1^{-1}$ from the right and using again the adjoint commutativity with the exponential we get 
\[ \exp(v_1/\lambda)k_1 \exp(v_2/\lambda)k_1^{-1} =  \exp(v_1/\lambda) \ad{k_1}{\exp(v_2/\lambda)} = \exp(v_1/\lambda) \exp(\ad{k_1}{v_2/\lambda}).  \]
The Zassenhaus formula (see e.g., \cite{casas2012efficient}) indicates that
\[ \exp((v_1+\ad{k_1}{v_2})/\lambda)= \exp( v_1/\lambda) \exp( \ad{k_1}{v_2}/\lambda)\exp(-\frac{1}{2} \comm{v_1/\lambda}{\ad{k_1}{v_2}/\lambda})\left(1+ \mathcal{O}(1/\lambda^3) \right). \]
Note that the convergence of the above product of the Zassenhaus formula is guaranteed when $\norm{v_1}+\norm{\ad{k_1}{v_2}} \le \lambda r$, with $r\approx 0.59 $, see \cite{casas2012efficient}. This is always true for large enough $\lambda$, as assumed by the claim. Also, since $k_1$ has a representation as an orthogonal matrix, it holds that $\norm{\ad{k_1}{v_2}} = \norm{v_2}$. For simplicity denote $c=\comm{v_1/\lambda}{\ad{k_1}{v_2}/\lambda}$. Then, since the norm is invariant to the common exponential terms we are left with
\begin{equation} \label{eqn:appHomProf}
\norm{I-
\exp(-\frac{1}{2}c)\exp(\frac{1}{6} \left(2 \comm{\ad{k_1}{v_2}/\lambda}{c} -  \comm{v_1/\lambda}{c} \right)  )\left(1+ \mathcal{O}(1/\lambda^4) \right)   } . 
\end{equation}
Consider the Taylor expansion of the matrix exponential map $\exp(A) = I + A + \frac{A^2}{2} + \ldots$, and recall that $c  =  \lambda^{-2} \left(v_1\ad{k_1}{v_2}-\ad{k_1}{v_2}v_1\right) $, to conclude that the leading order in \eqref{eqn:appHomProf} is indeed $\lambda^{-2}$.
\end{proof}

\subsection{The Euclidean group case}    \label{subsec:EuclideanGroupCase}  

The case of $\Gc = \se(d)$ has already been shown in Example~\ref{exmple:SE} to be a special case of Cartan motion groups. Note that the general action \eqref{eqn:Cartan_action}, where $\ad{}{\cdot}$ is taken as similarity transformation in the matrix representation of $\Gb=\so(d+1)$, is identical to the explicit action of the group as given in \eqref{eqn:SE_action}, under the standard matrix representation in $\Gc$. 

The contraction maps in $\se(d)$ take the form
\begin{equation}
 \Psi_{\lambda}\left( \mu, b \right) = \exp(\frac{1}{\lambda}b)\mu, \quad  \lambda \ge 1, 
\end{equation}
where $\mu \in \so(d)$ and $\so(d)$ is identified as a subgroup of $\Gb$ given by $ \{Q \in \so(d+1) \mid Qe_{d+1}=e_{d+1}   \}$. In addition, the exponential map is in the group $\Gb$ and thus $b \in \mathbb{R}^d $ is understood in this context as a matrix. To be specific, recall~\eqref{eqn:anti_sym_map_to_Euclidean}, then we refer to the map $b \mapsto \left[ \begin{smallmatrix} \mathbf{0}_{d\times d}  & b \\-b^T & 0\end{smallmatrix} \right]$. Further examination of the latter matrix reveals that for $b\neq 0$ its rank is $2$, that is the kernel is of dimension $d-1$. In other words, the spectrum consists of $d-1$ zero eigenvalues. The remaining two eigenvalues are pure complex (and conjugated as the matrix is real skew symmetric) and equal to $ \pm \sqrt{-b^Tb} $ since
\begin{equation} \label{eqn:eigenvalueofB}
 \left[ \begin{smallmatrix} \mathbf{0}_{d\times d}  & b \\-b^T & 0\end{smallmatrix} \right] \left[ \begin{smallmatrix} b \\ \pm \sqrt{-b^Tb} \end{smallmatrix} \right] = \pm \sqrt{-b^Tb} \left[ \begin{smallmatrix} b \\ \pm \sqrt{-b^Tb} \end{smallmatrix} \right]. 
\end{equation}
The spectrum is important since the exponential function as a matrix function can be interpreted as a scalar function on the spectrum. Therefore, injectivity of the exponential is guaranteed when $\norm{b} = \sqrt{b^Tb}$ is corresponding to the principal branch of the logarithm function, that is for
\begin{equation} \label{eqn:lambda_cond}
\norm{b}/\lambda < \pi .
\end{equation}  
To have some intuition regarding the nature of our contraction map $\Psi_\lambda$ in $\se(d)$, it is interesting to consider the geometrical interpretation of the special cases $\se(1)$ and $\se(2)$. The following interpretation is accompanied by a visual illustration given in Figure~\ref{fig:VisualPsiOnSE2}. 

The group $\se(1)$ is isomorphic to the real line as the rotations are only the scalar $1$ and the translational part is of dimension $1$. Therefore, $\Psi_\lambda$ is just $\exp(\left[ \begin{smallmatrix} 0  & b \\-b & 0\end{smallmatrix} \right])$, $b\in \mathbb{R}$ which basically maps the scaled number $b/\lambda$ to a point on the unit sphere in $\mathbb{R}^2$ by an angular distance of $b/\lambda$ from the ``east pole" $(0,1)$. Note that \eqref{eqn:lambda_cond} ensures that we cannot approach the west pole and thus we remain in the injectivity domain on the sphere. This interpretation is illustrated in Figure~\ref{fig:VisualPsiOnSE2} (left panel). 

For $\se(2)$, $\Psi_\lambda$ maps any $g = \left( \mu, b \right)$ to an element of $\so(3)$. This element is equivalent to first applying the rotation $\mu$ around the $z$-axis (recall the subgroup $\so(2) \cong \{Q \in \so(3) \mid Qe_{3}=e_{3}   \}$) followed by rotation in the ``direction" of the matrix $b$ with a geodesic distance of $\norm{b}/\lambda$. This characterization, also known as the misorientation axis and angle, helps us interpret geometrically this exponential map as mapping a point from a tangent plane $\{ (x,y,1) \colon x,y\in \mathbb{R}  \}$ on the ``north pole" $(0,0,1)$ of the sphere $S^2$ into $S^2$. Note that while the usual representation of a rotation in $\so(3)$ requires a unit vector in $\mathbb{R}^4$ (unit quaternion), in this case since $\exp(b/\lambda)$ is determined by only $2$ coordinates we can identify any such matrix with a unit vector in $\mathbb{R}^3$. Therefore, we can think about the mapping $\Psi_\lambda(g)$ as sending the sufficiently small, scaled translational part $b/\lambda$, that is a point close to the origin in $\mathbb{R}^2$, to a corresponding point on the sphere $S^2$ close to its north pole. In this case, the illustration is given in Figure~\ref{fig:VisualPsiOnSE2} (right panel), which schematically demonstrates the mapping. Note that we do not described the full contraction of the Cartan motion group which should also include the product with the $K$ component, for the clearness of the figure.

\begin{figure}
\centering    
\captionsetup[subfloat]{labelformat=empty}
\subfloat[$\se(1) \cong \mathbb{R} \mapsto \so(2) \cong S^1$]{\label{subfig:GI1}    \includegraphics[width=0.33\textwidth]{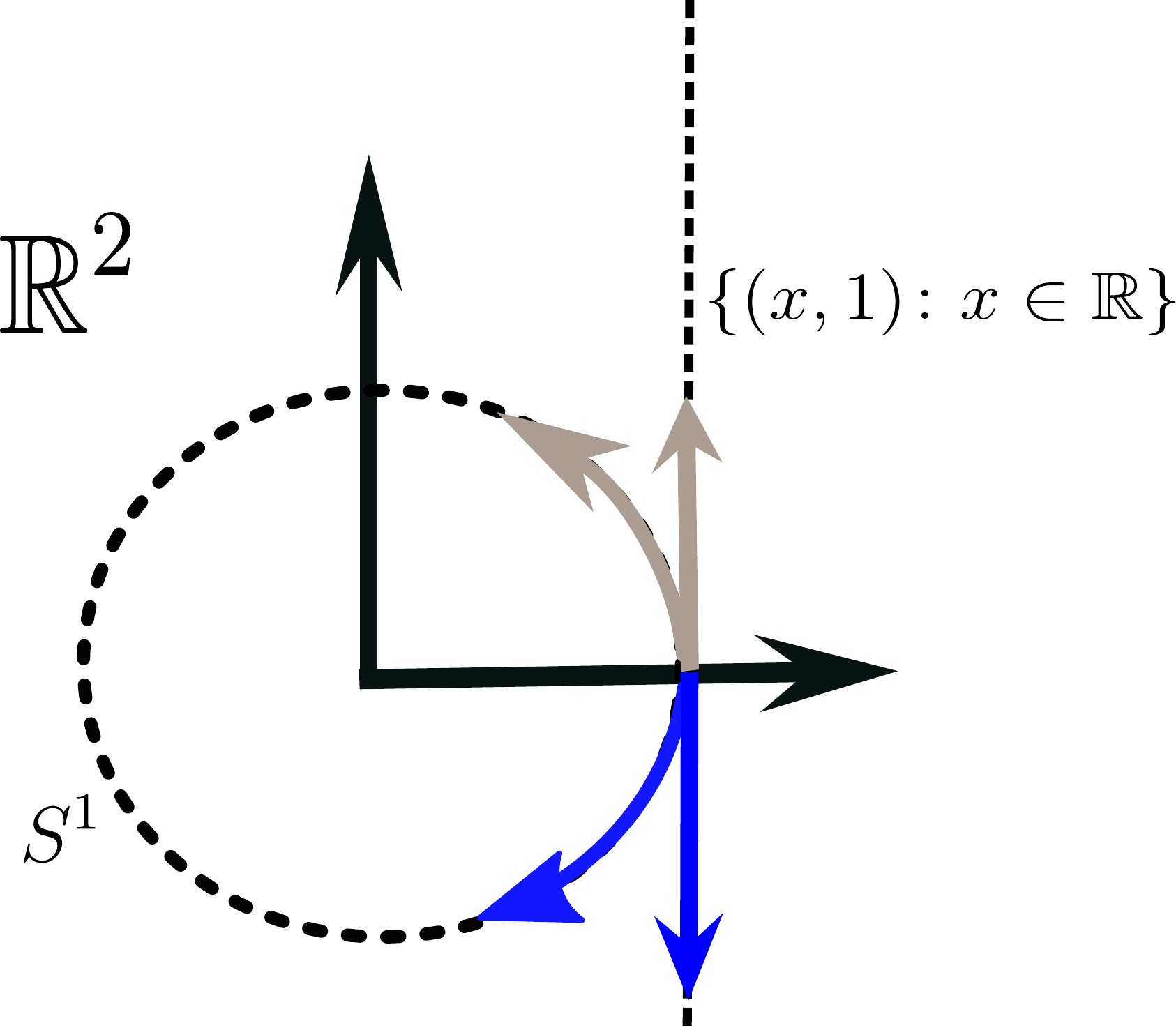}} \qquad \qquad
\subfloat[$\se(2) \mapsto \so(3) \cong \so(2) \times S^2$ ]{	\label{subfig:GI2}   \includegraphics[width=0.38\textwidth]{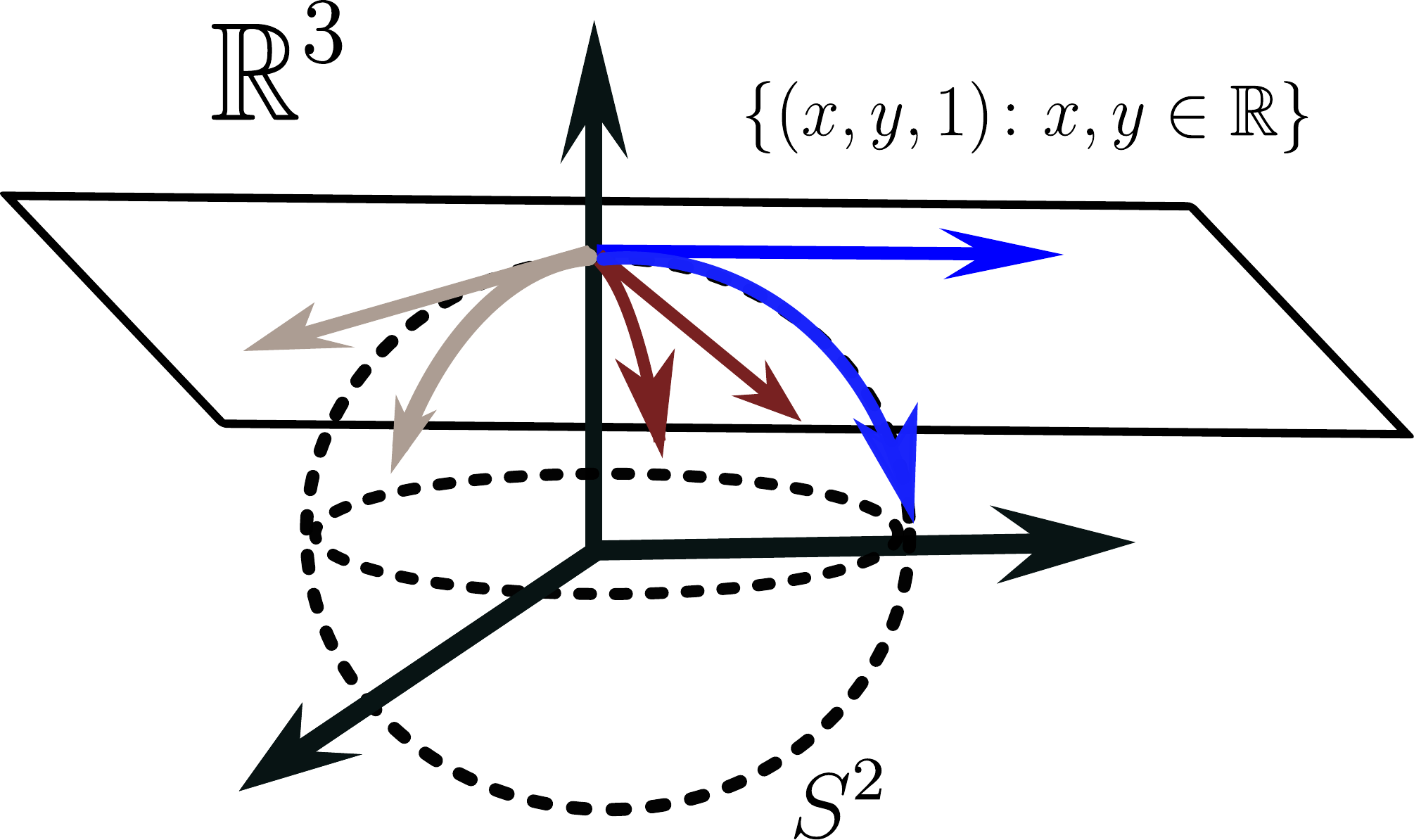}} 
\caption{A geometrical interpretation of  $\Psi_\lambda$ on $\se(1)$ and $\se(2)$ --- mapping scaled translational parts to the spheres.}
\label{fig:VisualPsiOnSE2}
\end{figure}

\subsection{A linear algebra variant -- matrix level projections} \label{subsec:matrix_projection}
 
In previous parts we described how one can exploit the group structure using a group decomposition (Cartan) for mapping $\se(d)$ to $\so(d+1)$. In this part, we aim to use the same mapping notion, but on the matrix level and by a matrix decomposition. Precisely, our second compactification method (valid for the special Euclidean group) is based upon the approximation of the matrices representing elements of $\se(d)$ by the orthogonal part of their polar decomposition (PD). 

Approximating elements from $\se(d)$ by elements from $\so(d+1)$ is not new. In the context of motion synthesis and the design of approximate bi-invariant metrics on $\se(d)$, the cases $d = 2$ and $d = 3$ were studied in~\cite{mccarthy1983planar} using Euler angles and geometrically intuitive ideas. The related following works~\cite{etzel1996metric,larochelle1995planar} use Taylor expansions. These methods are also related to the contraction maps discussed above, however since they are based on geometrical notions and Taylor series of special functions, it is difficult to generalize them further to other groups. The projections based on PD were introduced in~\cite{larochelle2004svd,larochelle2007distance}, and can be considered as alternative generalizations to the approximations in~\cite{mccarthy1983planar}, for the study of approximate invariant metrics in $\se(d)$. 

To be aligned with the contraction maps, we define a family of PD-based projections (we also refer to them broadly as contractions) $\{ \Phi_{\lambda} \}_{\lambda\ge 1}$, which are maps $\Phi_\lambda \colon \se(d) \rightarrow \so(d+1)$ that are defined as
\begin{equation} \label{eqn:matrix_projection}
\Phi_{\lambda}\left(  \left[ \begin{smallmatrix} \mu & b \\ \mathbf{0}_{1\times d} & 1\end{smallmatrix} \right]  \right) = U_{\lambda} ,
\end{equation}
where $U_{\lambda}$ is the orthogonal part of the PD decomposition $g_{\lambda} = U_{\lambda}H_{\lambda}$ ($H_{\lambda}$ is symmetric positive definite) of
\[g_{\lambda} = \left[ \begin{smallmatrix} \mu & b / \lambda \\ \mathbf{0}_{1\times d} & 1\end{smallmatrix} \right] .\] 
As in the contraction maps of groups, $\lambda$ is used as a parameter for the scale of the translational part. In addition, as analogue for the contraction maps, for large $\lambda$ the map $\Phi_{\lambda} \left( g \right)$ tends to $ \left[ \begin{smallmatrix} \mu & \mathbf{0}_{d\times 1} \\ \mathbf{0}_{1\times d} & 1\end{smallmatrix} \right]$ which means $\se(d)$ is mapped into $\so(d)$ (as a subgroup of $\so(d+1)$).

The term projection is used in the context of $\Phi_\lambda$ since in the space of square matrices of size $d+1$, under the metric induced from the Frobenius norm, we have that $\Phi_\lambda \left(\left[ \begin{smallmatrix} \mu & b / \lambda \\ \mathbf{0}_{1\times d} & 1\end{smallmatrix} \right] \right)$ is the closest element in $\so(d+1)$ to $g_\lambda$. This is true since the closest orthogonal matrix is given by $V_{\lambda} U_{\lambda}^T $ where  $g_\lambda = V_{\lambda}\Sigma_{\lambda} U_{\lambda}^T$ is the SVD decomposition, but this is exactly $U_\lambda$ from the PD decomposition. Next, we describe $\Phi_\lambda$ in more detail.

\begin{prop} \label{prop:detailed_phi}
Let $0\neq b \in \mathbb{R}^d$, and let $P$ be an orthogonal projection matrix for the subspace 
\begin{equation} \label{eqn:subspacePDmethod}
  \{ v\in \mathbb{R}^d \mid v^T b=0 \} . 
\end{equation}
Then, the maps $\Phi_\lambda$ of~\eqref{eqn:matrix_projection} are in $\so(d+1)$ and of the form
\[ \Phi_{\lambda}\left(  \left[ \begin{smallmatrix} \mu & b \\ \mathbf{0}_{1\times d} & 1\end{smallmatrix} \right]  \right) =  \left[\begin{matrix}  \left(I + (2\tau_{\lambda}-1)\hat{b}\hat{b}^T \right)\mu &\frac{ \tau_{\lambda}}{\lambda}b\\ - \frac{\tau_{\lambda}}{\lambda}b^T\mu & 2\tau_{\lambda} \end{matrix}\right] , \]
where $\hat{b} = b/\norm{b}$ and $\tau_{\lambda} = \left(4 + \|b\|_2^2/\lambda^2\right)^{-\frac{1}{2}}$.
\end{prop}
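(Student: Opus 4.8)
The plan is to avoid computing any matrix square root and instead verify the displayed formula directly by appealing to the uniqueness of the polar decomposition. First I would observe that $g_\lambda$ is invertible, since $\det g_\lambda = \det \mu = 1$, so its polar decomposition $g_\lambda = UH$ with $U$ orthogonal and $H$ symmetric positive definite is unique; moreover the orthogonal factor of any invertible matrix satisfies $\det U = \operatorname{sign}(\det g_\lambda) = 1$. Consequently it suffices to prove two things about the matrix $U_\lambda$ written in the statement: that $U_\lambda$ is orthogonal, and that $H_\lambda := U_\lambda^T g_\lambda$ is symmetric positive definite. Uniqueness then forces $\Phi_\lambda(\cdot)=U_\lambda$, and the sign of the determinant places it in $\so(d+1)$, so both assertions of the proposition follow at once.

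The key algebraic facts I would isolate at the outset are that $A := I + (2\tau_\lambda-1)\hat b \hat b^T$ is symmetric and acts as the identity on the hyperplane orthogonal to $b$ while $A\hat b = 2\tau_\lambda \hat b$ (equivalently $Ab = 2\tau_\lambda b$ and $A^2 = I + (4\tau_\lambda^2-1)\hat b\hat b^T$), together with the defining relation $\tau_\lambda^{-2} = 4 + \|b\|^2/\lambda^2$, that is, $4\tau_\lambda^2 + \tau_\lambda^2\|b\|^2/\lambda^2 = 1$. To check orthogonality I would expand $U_\lambda^T U_\lambda$ block by block: the top-left block collapses to $I$ precisely because the scalar coefficient $(4\tau_\lambda^2-1)+\tau_\lambda^2\|b\|^2/\lambda^2$ vanishes by the defining relation, the two off-diagonal blocks cancel using $Ab=2\tau_\lambda b$, and the bottom-right scalar is again $4\tau_\lambda^2+\tau_\lambda^2\|b\|^2/\lambda^2=1$. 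This part is routine bookkeeping once the eigen-action of $A$ and the relation for $\tau_\lambda$ are in hand.

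The main work, and the step I expect to be the real obstacle, is establishing positive definiteness of $H_\lambda=U_\lambda^T g_\lambda$. A direct multiplication shows $H_\lambda$ is symmetric (immediate from $A^T=A$), with top-left block $\mu^T A \mu = I + (2\tau_\lambda-1)\hat c\hat c^T$ where $\hat c := \mu^T\hat b$. I would then diagonalize by exploiting invariant subspaces: on the $(d-1)$-dimensional subspace $\{(x,0):x\perp \hat c\}$ the matrix $H_\lambda$ acts as the identity, contributing eigenvalue $1$, so positivity reduces to the $2\times 2$ block acting on the span of $(\hat c,0)$ and $(0,1)$. That block is $\left[\begin{smallmatrix} 2\tau_\lambda & \tau_\lambda\|b\|/\lambda \\ \tau_\lambda\|b\|/\lambda & 2\tau_\lambda + \tau_\lambda\|b\|^2/\lambda^2 \end{smallmatrix}\right]$, whose trace $4\tau_\lambda + \tau_\lambda\|b\|^2/\lambda^2$ is manifestly positive and whose determinant simplifies, again via the defining relation, to exactly $\tau_\lambda^2(4+\|b\|^2/\lambda^2)=1>0$. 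Positive trace together with positive determinant forces two positive eigenvalues, so $H_\lambda$ is positive definite. With symmetry and positivity of $H_\lambda$ and orthogonality of $U_\lambda$ both in place, uniqueness of the polar decomposition identifies $U_\lambda$ with $\Phi_\lambda(\cdot)$ and $\det U_\lambda = 1$ gives membership in $\so(d+1)$, completing the proof.
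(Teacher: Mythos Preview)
Your proof is correct and takes a genuinely different route from the paper's. The paper proceeds constructively: it factors $g_\lambda = T_\lambda\left[\begin{smallmatrix}\mu & 0\\ 0 & 1\end{smallmatrix}\right]$ with $T_\lambda=\left[\begin{smallmatrix} I & b/\lambda\\ 0 & 1\end{smallmatrix}\right]$, then explicitly computes the SVD of $T_\lambda$ by diagonalizing $T_\lambda T_\lambda^T$ and $T_\lambda^T T_\lambda$ --- recognizing the $(d-1)$-dimensional eigenspace orthogonal to $b$ and solving a quadratic for the remaining two eigenvectors --- and finally assembles $U_\lambda V_\lambda^T$ block by block to arrive at the displayed formula. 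Your argument is instead a verification: you take the claimed $U_\lambda$ as given, check orthogonality directly via the identity $4\tau_\lambda^2+\tau_\lambda^2\|b\|^2/\lambda^2=1$, and then show that $U_\lambda^T g_\lambda$ is symmetric positive definite by reducing to a $2\times 2$ block on the span of $(\hat c,0)$ and $(0,1)$. Uniqueness of the polar decomposition and the sign of $\det g_\lambda$ finish the job. Your approach is shorter and avoids any eigenvector bookkeeping or explicit SVD, at the cost of presupposing the formula; the paper's approach is longer but shows how the formula could be discovered and, as a byproduct, yields the full SVD of $g_\lambda$ (which the authors remark is not actually needed to evaluate $\Phi_\lambda$). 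Both ultimately hinge on the same invariant-subspace structure --- the hyperplane orthogonal to $b$ (or $\hat c$) together with a residual two-dimensional block --- so the underlying geometry is the same even though the logical organization differs.
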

The proof is given in the supplementary, Section~\ref{app:proof_of_phi_form}.

The explicit form of the projections allows us to deduce some important properties. First, we do not have to fully compute the SVD of the representing matrix of $g_\lambda$ to calculate $\Phi_{\lambda}\left(g \right)$. Second, it shows how to invert $\Phi_{\lambda}$, as $I + (2\tau_{\lambda}-1)\hat{b}\hat{b}^T$ is invertible for any $\tau_{\lambda}>0$. Note that the case $\tau_{\lambda}=\frac{1}{2}$ corresponds to $b=0$ which verifies $\Phi_{\lambda}(g)=g$ in this case. The invertibility of $\Phi_{\lambda}$ is crucial for its use as a compactification method for synchronization, as we will see in the next section. Thus, we conclude
\begin{corollary} \label{prop:injective_maps}
The maps $\Phi_\lambda$ of~\eqref{eqn:matrix_projection} are injective.
\end{corollary}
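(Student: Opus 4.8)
The plan is to read injectivity directly off the explicit block form supplied by Proposition~\ref{prop:detailed_phi}: I would exhibit an explicit left inverse, showing that the pair $(\mu,b)$ that determines an element of $\se(d)$ can be reconstructed unambiguously from the entries of $\Phi_\lambda(g)$. Writing the image in block form as
\[
\Phi_{\lambda}\left(  \left[ \begin{smallmatrix} \mu & b \\ \mathbf{0}_{1\times d} & 1\end{smallmatrix} \right]  \right) =  \left[\begin{matrix}  A & p \\ q^T & s \end{matrix}\right],
\]
with $A$ the top-left $d\times d$ block, $p$ the top-right column, $q^T$ the bottom-left row, and $s$ the bottom-right scalar, I would recover the unknowns $\tau_\lambda$, $b$, and $\mu$ one at a time and in that order.

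First, the bottom-right scalar gives $s=2\tau_\lambda$, hence $\tau_\lambda = s/2$; note that $\tau_\lambda = (4+\norm{b}^2/\lambda^2)^{-1/2}$ is strictly positive for every $b$. Second, the top-right column is $p=\frac{\tau_\lambda}{\lambda}b$, so with $\tau_\lambda>0$ and $\lambda\ge 1$ now known we recover $b=\frac{\lambda}{\tau_\lambda}p$ with no ambiguity, and therefore also the unit vector $\hat b=b/\norm{b}$ whenever $b\neq 0$. At this point both $b$ and the coefficient $2\tau_\lambda-1$ are determined by the image alone.

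Finally, the top-left block reads $A=\bigl(I+(2\tau_\lambda-1)\hat b\hat b^T\bigr)\mu$, and I would invert the rank-one-perturbed factor to isolate $\mu$. This is the only genuine point to verify, and it is short: the symmetric matrix $I+(2\tau_\lambda-1)\hat b\hat b^T$ acts as the identity on the hyperplane $\hat b^\perp$ and scales $\hat b$ by $1+(2\tau_\lambda-1)=2\tau_\lambda$, so its determinant equals $2\tau_\lambda$, which is strictly positive because $\tau_\lambda>0$. Hence the factor is invertible and $\mu=\bigl(I+(2\tau_\lambda-1)\hat b\hat b^T\bigr)^{-1}A$ is uniquely determined. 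The degenerate case $b=0$ costs only a word: there $\tau_\lambda=\tfrac12$, the coefficient $2\tau_\lambda-1$ vanishes, and $A=\mu$ directly. In every case $(\mu,b)$ is recovered uniquely from $\Phi_\lambda(g)$, so $\Phi_\lambda$ is injective. Note that the substantive work lives in Proposition~\ref{prop:detailed_phi}; given that explicit form, the corollary is a one-pass reconstruction whose only nontrivial ingredient is the positivity of the eigenvalue $2\tau_\lambda$ guaranteeing invertibility of the top-left factor.
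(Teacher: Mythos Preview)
Your proof is correct and follows essentially the same route as the paper: both use the explicit block form from Proposition~\ref{prop:detailed_phi} to first recover $b$ (via the bottom-right scalar and top-right column) and then recover $\mu$ by invoking invertibility of $I+(2\tau_\lambda-1)\hat b\hat b^T$. Your version is slightly more explicit in spelling out the eigenvalue $2\tau_\lambda>0$ that guarantees this invertibility, but the argument is the same.
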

\begin{proof}
The corollary holds since Proposition~\ref{prop:detailed_phi} implies that for any $g_1 = \left[\begin{smallmatrix} \mu_1 & b_1 \\ \mathbf{0}_{1\times d} & 1\end{smallmatrix}\right]$ and $g_2 = \left[\begin{smallmatrix} \mu_2 & b_2 \\ \mathbf{0}_{1\times d} & 1\end{smallmatrix}\right]$ in $\se(d)$, $\Phi_{\lambda}\left( g_1 \right) = \Phi_{\lambda}\left( g_2 \right)$ entails $b_1 = b_2$ and 
\[ \left(I + (2\tau_{\lambda}-1)\hat{b}\hat{b}^T \right)\mu_1 = \left(I + (2\tau_{\lambda}-1)\hat{b}\hat{b}^T\right)\mu_2 , \] 
implies $\mu_1=\mu_2$. Namely, $ g_1  = g_2$. 
\end{proof} 

As in the case of group contraction maps, the justification for using $\Phi_{\lambda}$ is as follows,
\begin{prop} \label{prop:matrix_proj_app_hom}
The maps $\Phi_\lambda$ of~\eqref{eqn:matrix_projection} are approximated homomorphisms, as defined in \eqref{eqn:app_homo}.
\end{prop}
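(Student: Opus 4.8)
The plan is to verify the two conditions of \eqref{eqn:app_homo} directly using the explicit form of $\Phi_\lambda$ given in Proposition~\ref{prop:detailed_phi}. For the first (exact inverse) condition, I would take $g = \left[\begin{smallmatrix} \mu & b \\ \mathbf{0}_{1\times d} & 1\end{smallmatrix}\right]$ and compute $g^{-1}$ in $\se(d)$, which has rotational part $\mu^T$ and translational part $-\mu^T b$. Since $\Phi_\lambda$ is defined via the orthogonal factor of the polar decomposition, and the polar decomposition commutes with transposition (if $g_\lambda = U_\lambda H_\lambda$ then $g_\lambda^T = H_\lambda U_\lambda^T$ has orthogonal factor $U_\lambda^T$), I expect $\Phi_\lambda(g^{-1}) = \Phi_\lambda(g)^T = \Phi_\lambda(g)^{-1}$ to fall out almost immediately. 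The cleanest route is to apply the explicit formula to $g^{-1}$: the new $\hat b$ is $-\mu^T\hat b$, the scalar $\tau_\lambda$ is unchanged because $\|\mu^T b\| = \|b\|$, and a short block-matrix computation should show the result equals the transpose of the formula for $\Phi_\lambda(g)$.

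For the second (approximate-homomorphism) condition, I would estimate $\norm{\Phi_\lambda(g_1 g_2) - \Phi_\lambda(g_1)\Phi_\lambda(g_2)}_F$ by Taylor-expanding the explicit formula in the small parameter $1/\lambda$. The key observation is that $\tau_\lambda = \left(4 + \|b\|^2/\lambda^2\right)^{-1/2} = \tfrac12\left(1 + \|b\|^2/(4\lambda^2)\right)^{-1/2} = \tfrac12 - \tfrac{\|b\|^2}{16\lambda^2} + \mathcal{O}(\lambda^{-4})$, so that $2\tau_\lambda - 1 = \mathcal{O}(\lambda^{-2})$ and the off-diagonal translational blocks are $\mathcal{O}(\lambda^{-1})$. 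Writing $g_1g_2$ in $\se(d)$ via \eqref{eqn:SE_action} gives composed rotation $\mu_1\mu_2$ and translation $b_1 + \mu_1 b_2$, and I would substitute this into the explicit formula. To leading order every $\Phi_\lambda$ looks like $\left[\begin{smallmatrix} \mu & \mathbf{0} \\ \mathbf{0} & 1\end{smallmatrix}\right]$ plus $\mathcal{O}(\lambda^{-1})$ corrections; the product $\Phi_\lambda(g_1)\Phi_\lambda(g_2)$ and the single map $\Phi_\lambda(g_1g_2)$ therefore agree in their rotational block and in their $\mathcal{O}(\lambda^{-1})$ translational blocks, so the discrepancy begins at order $\lambda^{-2}$. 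The translational blocks matching at first order is what forces cancellation of the $\mathcal{O}(\lambda^{-1})$ term: both expressions contribute $\tfrac{1}{2\lambda}(b_1 + \mu_1 b_2)$ to the top-right block.

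The main obstacle will be organizing the first-order bookkeeping carefully enough to confirm the $\mathcal{O}(\lambda^{-1})$ terms genuinely cancel, rather than merely being present. In particular the cross term from multiplying the two off-diagonal blocks of $\Phi_\lambda(g_1)$ and $\Phi_\lambda(g_2)$ — namely the product of a $\tfrac{\tau_\lambda}{\lambda}b_1$ block with a $-\tfrac{\tau_\lambda}{\lambda}b_2^T\mu_2$ block — lands in the rotational block at order $\lambda^{-2}$ and must be tracked, as must the $(2\tau_\lambda-1)\hat b\hat b^T$ correction, which is likewise $\mathcal{O}(\lambda^{-2})$. I would therefore expand each block of both matrices to order $\lambda^{-2}$, subtract, and read off that the constant and $\lambda^{-1}$ terms vanish identically while the surviving $\lambda^{-2}$ coefficient depends only on $\mu_1,\mu_2,b_1,b_2$ and not on $\lambda$, which is exactly the assertion of \eqref{eqn:app_homo}. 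Since $\Phi_\lambda(g)$ is manifestly smooth and bounded on bounded data, no convergence subtlety analogous to the Zassenhaus radius condition of Proposition~\ref{prop:app_homo_contraction} arises here, so the estimate holds for all $\lambda \ge 1$ without an auxiliary norm restriction.
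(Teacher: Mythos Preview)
Your proposal is correct and follows essentially the same approach as the paper: for the second condition you both Taylor-expand the explicit formula of Proposition~\ref{prop:detailed_phi} using $\tau_\lambda = \tfrac12 + \mathcal{O}(\lambda^{-2})$ and check block by block that the $\mathcal{O}(\lambda^{-1})$ terms cancel. For the first condition the paper takes a slightly slicker route: since $(g^{-1})_\lambda = (g_\lambda)^{-1}$ and the SVD of $g_\lambda^{-1}$ is $W_\lambda \Sigma_\lambda^{-1} V_\lambda^T$ whenever $g_\lambda = V_\lambda \Sigma_\lambda W_\lambda^T$, the orthogonal polar factor of $g_\lambda^{-1}$ is immediately $W_\lambda V_\lambda^T = (V_\lambda W_\lambda^T)^{-1}$; your explicit-formula verification also works, but your parenthetical about ``transposition'' is not quite the right justification, since $(g^{-1})_\lambda$ equals $g_\lambda^{-1}$, not $g_\lambda^T$.
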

The proof is given in supplementary, Section~\ref{app:proof_of_approximated_homomorphism}.

\section{Application of compactification to synchronization} \label{sec:ApplicationForSync}

We now focus on the application of the above algebraic tools of compactification to synchronization over groups. In light of current solutions for synchronization, described in Subsection \ref{subsec:available_solutions_se_group}, the use of contractions for synchronization offers several advantages. To begin with, it uses full integration of the available data (in contrary to methods that separate between the two components of the group). Secondly, it is valid for a wide class of non-compact Cartan motion groups, including the special case of $\se(d)$. At last, as reviewed in Subsection~\ref{subsec:synch_compact_group} many efficient methods were designed and suggested for solving rotation synchronization. By applying contraction, we facilitate the use of almost any such method in order to gain extra robustness (for example, dealing with outliers or noise) for the case of synchronization over Cartan motion groups as well.

\subsection{Synchronization: from Cartan motion to compact and back}

We describe the use of compactification for solving the synchronization problem over Cartan motion groups. The compactification methods are exploited to reduce this problem into the well-studied rotation synchronization. Using the notation of the previous section, $\Gb$ is a compact group and $\Gc$ is its contraction, a Cartan motion group. We solve the synchronization over $\Gc$ as follows. First, we apply the contraction map $ \Psi_\lambda$ to $ \{ g_{ij} \}$, forming a new set of measurements in $\Gb$. Then, we solve the corresponding synchronization problem, having the same data graph $\mathcal{G}$ under the new set of measurements. The constraint of the new synchronization problem is now $ \{ \Psi_\lambda\left(g_i \right) \}_{i=1}^n \subseteq \rho\left( \Gb  \right) $. Then, having a solution to the problem in $\Gb$, we map it back to $\Gc$ by using the inverse map $\Psi_\lambda^{-1}$. This procedure is summarized in Algorithm~\ref{alg:contraction_Cartan}.

\begin{algorithm}[ht]
\caption{Synchronization via group contraction on Cartan motion groups}
\label{alg:contraction_Cartan}
\begin{algorithmic}[1]
\REQUIRE  Ratio measurements $\{ g_{ij} \}_{(i,j) \in \mathcal{E}} \subset \Gc$, and corresponding weights $\{ w_{ij} \}_{(i,j) \in E} $.  \\ A synchronization solver $\operatorname{Solve}_\Gb$ over $\Gb$ (e.g., see Algorithm~\ref{alg:eigenvector_method}). \\
\ENSURE An approximation $\{\hat{g}_i\}_{i=1}^n$ to the unknown group elements.
\STATE Choose $\lambda=\lambda \left( \{ g_{ij} \}_{(i,j) \in \mathcal{E}} \right) \ge 1$.  \quad \COMMENT{See Section~\ref{sec:numerical_examples}}
\FOR{$(i,j) \in \mathcal{E}$}   
\STATE $ Q_{ij} \gets \Psi_\lambda\left(g_{ij}\right) $       
\COMMENT{Compactification step: Mapping $\Gc$ to $\Gb$ (for $\se(d)$ it can be done, e.g., by $\Phi_\lambda$)}                           
\ENDFOR 
\STATE $\{ Q_{i} \}_{i=1}^n \gets \operatorname{Solve}_\Gb\left( \{ Q_{ij}, w_{ij}\}_{(i,j) \in E} \right)$   \label{lin:compactSync}
\STATE Choose a global alignment $Q \in \Gb$  \quad \COMMENT{See Section~\ref{sec:numerical_examples}}
\FOR{$i=1,\ldots,n$}    
\STATE  $\hat{g}_i \gets \Psi_\lambda^{-1}\left(Q_{i} Q \right)$           \qquad           \COMMENT{Back mapping to $\Gc$}
\ENDFOR       
\RETURN $\{\hat{g}_i\}_{i=1}^n$.
\end{algorithmic}
\end{algorithm}

The back mapping from $\Gb$ to $\Gc$ after the compact synchronization step leads to the following observation; while compact synchronization offers a non-unique solution, which is determined up to a global alignment (right multiplication by a fixed group element), the choice of the certain solution which we actually map back to $\Gc$ effects the solution there. Namely, the solution in $\Gc$ is still non-unique, but for any two different solutions (equal up to global alignment) in $\Gb$ we get two different solutions in $\Gc$ which do not necessary differ only by global alignment. This is a result of the fact that we cannot guarantee the back mapping to be a homomorphism. Nevertheless, we can characterize when global alignment in $\Gb$ remains a global alignment in $\Gc$ after the back mapping.

\begin{prop} \label{prop:invarint_back_mapping}
Let $\{ Q_i \}_{i=1}^n$ be a solution to the compact synchronization step in Line~\ref{lin:compactSync} of Algorithm~\ref{alg:contraction_Cartan}. Denote the Cartan decomposition in $\Gb$ of each of its elements by $Q_i = \exp(v_i)k_i$. Then, the output of Algorithm~\ref{alg:contraction_Cartan} is invariant to any global alignment $Q$ with the Cartan decomposition $Q = \exp(v)k$ such that $\ad{k_i}{v}$ commutes with $v_i$, $i=1,\ldots,n$. 
\end{prop}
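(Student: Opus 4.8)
The plan is to compute the two back-mapped outputs explicitly in Cartan-decomposed form and to show that they differ by a single fixed element of $\Gc$. First I would record how $\Psi_\lambda^{-1}$ acts on a Cartan decomposition: since $\Psi_\lambda(k,v) = \exp(v/\lambda)k$, an element of $\Gb$ written as $\exp(w)k$ with $w \in \mathfrc{p}$ and $k \in K$ is sent by $\Psi_\lambda^{-1}$ to $(k,\lambda w)\in \Gc$. In particular, with the trivial alignment the output is $\hat{g}_i^{(0)} = \Psi_\lambda^{-1}(Q_i) = (k_i,\lambda v_i)$, while for a general alignment $Q = \exp(v)k$ the output is $\hat{g}_i^{(Q)} = \Psi_\lambda^{-1}(Q_iQ)$. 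The goal then reduces to exhibiting a single $h\in\Gc$, independent of $i$, with $\hat{g}_i^{(Q)} = \hat{g}_i^{(0)}\,h$ for all $i$.

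The central computation is the Cartan decomposition of the product $Q_iQ = \exp(v_i)\,k_i\,\exp(v)\,k$. Using the commutativity of the adjoint with the exponential (exactly as in the proof of Proposition~\ref{prop:app_homo_contraction}), I would commute $k_i$ past $\exp(v)$ via $k_i\exp(v) = \exp(\ad{k_i}{v})\,k_i$, so that $Q_iQ = \exp(v_i)\exp(\ad{k_i}{v})\,(k_ik)$. Here the hypothesis enters: since $\ad{k_i}{v}$ commutes with $v_i$, the two exponentials merge with no Baker--Campbell--Hausdorff corrections, giving $Q_iQ = \exp\!\big(v_i + \ad{k_i}{v}\big)(k_ik)$. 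Because $\mathfrc{p}$ is a linear space closed under the adjoint, $v_i + \ad{k_i}{v}\in\mathfrc{p}$ while $k_ik\in K$, so this is precisely the Cartan decomposition of $Q_iQ$. Applying $\Psi_\lambda^{-1}$ then yields $\hat{g}_i^{(Q)} = \big(k_ik,\ \lambda v_i + \lambda\,\ad{k_i}{v}\big)$.

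It remains to identify the global alignment in $\Gc$. Using the group action~\eqref{eqn:Cartan_action}, I would compute $\hat{g}_i^{(0)}\,h = (k_i,\lambda v_i)(k,\lambda v) = \big(k_ik,\ \lambda v_i + \ad{k_i}{\lambda v}\big)$, which matches $\hat{g}_i^{(Q)}$ term by term. Thus the fixed element $h = (k,\lambda v) = \Psi_\lambda^{-1}(Q)$, which does not depend on $i$, realizes the $\Gc$-global alignment, establishing the claimed invariance.

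The hard part is conceptual rather than computational: one must notice that $\Psi_\lambda$ is only an approximated homomorphism, so the clean identity $\Psi_\lambda^{-1}(Q_iQ) = \Psi_\lambda^{-1}(Q_i)\,\Psi_\lambda^{-1}(Q)$ cannot be invoked in general. The exactness needed here is supplied entirely by the commutativity hypothesis, which is exactly the condition that annihilates the otherwise-present correction term. Since $v_i,\ad{k_i}{v}\in\mathfrc{p}$ and $\comm{\mathfrc{p}}{\mathfrc{p}}\subset\mathfrc{t}$, any failure of commutativity would inject a nonzero $\mathfrc{t}$-component into the merged exponent and spoil the Cartan decomposition; the hypothesis is precisely what forces the back-mapping to behave like a homomorphism on this restricted family of alignments.
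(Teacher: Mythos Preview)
Your proof is correct and follows essentially the same approach as the paper: both compute $Q_iQ = \exp(v_i)\exp(\ad{k_i}{v})(k_ik)$, invoke the commutativity hypothesis to merge the exponentials into a clean Cartan form, and then read off $\Psi_\lambda^{-1}(Q_iQ) = \Psi_\lambda^{-1}(Q_i)\,\Psi_\lambda^{-1}(Q)$. The only cosmetic difference is that the paper finishes by looking at the ratios $\Psi_\lambda^{-1}(Q_iQ)\Psi_\lambda^{-1}((Q_jQ)^{-1})$, whereas you phrase the conclusion directly as $\hat{g}_i^{(Q)} = \hat{g}_i^{(0)}\,h$ for the fixed $h = \Psi_\lambda^{-1}(Q)$; these are equivalent formulations of the same invariance.
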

Note that $v=0$ always satisfies the conditions in the above proposition.
\begin{proof}
By definition,
\begin{equation} \label{eqn:prod_QQi}
Q_iQ = \exp(v_i)k_i \exp(v)k = \exp(v_i)\exp(\ad{k_i}{v})k_i k . 
\end{equation}
However, $\comm{\ad{k_i}{v}}{v_i} = 0$ and therefore $\exp(v_i)\exp(\ad{k_i}{v}) = \exp(v_i+\ad{k_i}{v})$ with both $v_i+\ad{k_i}{v} \in \mathfrc{p} $ and $k_i k \in \mathfrc{t}$. Namely,
\[ \begin{aligned}[t]
    \Psi_\lambda^{-1} \left( Q_iQ \right)    &=   \Psi_\lambda^{-1} \left( \exp(v_i+\ad{k_i}{v}) k_i k \right) \\
            &=  \left( k_i k, \lambda (v_i+\ad{k_i}{v}) \right) = \Psi_\lambda^{-1} \left( Q_i \right) \Psi_\lambda^{-1} \left( Q \right) .
      \end{aligned} \]
Since the inverse is invariant for approximated homomorphism maps, so is the case for the inverse map, and overall we conclude that 
\[  \Psi_\lambda^{-1} \left( Q_i Q \right) \Psi_\lambda^{-1} \left( (Q_j Q)^{-1} \right) = \Psi_\lambda^{-1} \left( Q_i \right) \Psi_\lambda^{-1} \left( Q_j \right)^{-1} , \]
In other words, any cost function of synchronization over $\Gc$ that is based on the distance between $g_{ij}$ and $\Psi_\lambda^{-1} \left( Q_i \right) \Psi_\lambda^{-1} \left( Q_j \right)^{-1} $ remains unchanged after the global alignment $Q_i \mapsto Q_iQ$.
\end{proof}

Proposition~\ref{prop:invarint_back_mapping} basically states that any global alignment that preserves the structure of the Cartan decomposition in $\Gb$, does not affect the back mapping to $\Gc$. In other words, this result opens the door for further potential improvement in synchronization, done by carefully choosing a global alignment in $\Gb$ that minimizes the error after mapping back to $\Gc$. We elaborate more on the practical considerations of this choice in Subsection~\ref{subsec:global_alignment}.

Many of current solutions for synchronization are based on minimizing a cost function that depends on the Euclidean distance between the ratios of estimated elements and the given measurements. One such example is given in \eqref{eq:NonCompactOptimPrb1} for the case of $\se(d)$. In terms of a general Cartan motion group $ \Gc =  K \ltimes V$ we can rewrite this problem as
\begin{equation} \label{eqn:Cartan_Optim}
\begin{aligned}
& \underset{{\scriptstyle\{g_i\}_{i=1}^n}}{\text{minimize}}
& & \sum_{(i,j)\in \mathcal{E}} w_{ij}  d_H \left( g_i g_j^{-1}, {g}_{ij} \right)^2 \\
& \text{subject to}
& & \{ g_i\}_{i=1}^n \subseteq \Gc  ,
\end{aligned}
\end{equation}
with the hybrid metric $d_H \left( g_1,g_2 \right) =\sqrt{ \norm{k_1-k_2}^2_K +\norm{v_1-v_2}^2_V   }$ for $g_1 = (k_1,v_1),g_2=(k_2,v_2) \in \Gc$. The norm on $K$, as a (compact) subgroup of the compact group $\Gb$, is defined as the Frobenius norm on $\rho$, its faithful, orthogonal matrix representation, and the norm on $V$ is the Euclidean norm. The theoretical justification to use a hybrid metric here is since one can show an orthogonality between the two component of the group, following from the Cartan decomposition in the algebra level, and the preservation of orthogonality by the exponential map as implied by Gauss's lemma.

For the special case $\Gc=\se(d)$ the above synchronization problem~\eqref{eqn:Cartan_Optim} gets the following form after applying the contraction $\Psi_\lambda$
\begin{equation} \label{eqn:sync_SE_w_contraction}
\begin{aligned}
& \underset{{\scriptstyle\{Q_i\}_{i=1}^n \subset \so(d+1)}}{\text{minimize}}
& & \sum_{(i,j)\in \mathcal{E}} w_{ij}  \norm{Q_iQ_j^T-\Psi_\lambda(g_{ij})}_F^2      \\
& \text{subject to}
& & Q_i =\Psi_\lambda(g_i) , \quad g_i \in \se(d)  ,\quad i = 1,\ldots,n .
\end{aligned}
\end{equation}
Following Algorithm~\ref{alg:contraction_Cartan}, we firstly approach \eqref{eqn:sync_SE_w_contraction} by solving synchronization with $\left\{ \Psi_{\lambda}\left( g_{ij} \right) \right\}_{i,j \in \mathcal{E}} \subset \so(d+1)$. For that, we can utilize one of the methods mentioned in Subsection~\ref{subsec:synch_compact_group}. For example, following the spectral method of Example~\ref{exmple:EigenvectorsMethod}, we construct the measurement matrix $M$, given by
\[ M_{ij} = \twopartdef { w_{ij}\Psi_{\lambda}\left(g_{ij}\right)} {(i,j) \in \mathcal{E},} {\mathbf{0}_{(d+1)\times (d+1)}} {(i,j) \notin \mathcal{E},} \]
and extract its leading eigenvectors to have our rotations estimation. We then calculate each inverse $\Psi_{\lambda}^{-1}\left( Q_i \right)$ to recover a solution, back in $\se(d)$. This step requires the calculation of the Cartan decomposition for every other element (see Example~\ref{exmple:SE} for this Cartan decomposition). We elaborate more on the calculations involved for obtaining the Cartan decomposition in supplementary, Section~\ref{apx:CartanCalculation}.

In the above example, we see a typical difference between an original synchronization problem in $\Gc$ and its reduced corresponding problem in $\Gb$. The source of difference in the cost functions of \eqref{eq:NonCompactOptimPrb1} and \eqref{eqn:Cartan_Optim} are the norms. Since these Euclidean distances play a key role in many instances of cost functions of synchronization, e.g, in LUD methods \cite{wang2013exact} or in least squares methods \cite{singer2011angular}, we prove that there exists a factor, quadratic in $\lambda$, that links them.
 
\begin{prop} \label{prop:distance_between_const_functions}
Let $\Psi_\lambda$ be a contraction of the form \eqref{eqn:cartan_contraction}. Assume that $g_1,g_2,g \in \Gc$ are such that $g_1g_2^{-1}$ and $g$ are close enough in the sense of Lemma~\ref{lemma:exp_dist_bound}, that is their $\mathfrc{p}$ components after scaling by $\lambda$ are inside the injectivity radius around the identity. Then, 
\[ \norm{\Psi_\lambda(g_1)\Psi_\lambda(g_2)^{-1}-\Psi_\lambda(g)}_F^2 \le  \left( d_H \left( g_1g_2^{-1},g\right)\right)^2 + \mathcal{O}(\lambda^{-2}) .\]
where the constant in $\mathcal{O}(\lambda^{-2})$ is a function of $\comm{g_1}{g_2}$ but independent of $\lambda$.
\end{prop}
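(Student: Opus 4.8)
The plan is to transport the whole expression into the compact group $\Gb$, where the Frobenius metric on orthogonal matrices is bi-invariant, and then to exploit the two structural facts already at hand: the approximated homomorphism of Proposition~\ref{prop:app_homo_contraction} and the orthogonality of the $\mathfrc{t}$- and $\mathfrc{p}$-components coming from the Cartan involution. First I would remove the two ratios. Setting $h=g_1g_2^{-1}$ and using the exact inverse relation together with the approximated homomorphism of Proposition~\ref{prop:app_homo_contraction} gives $\Psi_\lambda(g_1)\Psi_\lambda(g_2)^{-1}=\Psi_\lambda(g_1)\Psi_\lambda(g_2^{-1})=\Psi_\lambda(h)+E$ with $\norm{E}_F=\mathcal{O}(\lambda^{-2})$ (the closeness hypothesis guarantees the convergence condition $\norm{v_1}+\norm{v_2}\le\lambda r$ of that proposition). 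Since all factors are orthogonal, $\norm{\Psi_\lambda(h)-\Psi_\lambda(g)}_F=\mathcal{O}(1)$, so expanding $\norm{\Psi_\lambda(h)+E-\Psi_\lambda(g)}_F^2$ contributes the target term $\norm{\Psi_\lambda(h)-\Psi_\lambda(g)}_F^2$ plus $2\langle \Psi_\lambda(h)-\Psi_\lambda(g),E\rangle_F=\mathcal{O}(\lambda^{-2})$ and $\norm{E}_F^2=\mathcal{O}(\lambda^{-4})$. It therefore suffices to bound $\norm{\Psi_\lambda(h)-\Psi_\lambda(g)}_F^2$.

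Writing $h=(k_h,v_h)$ and $g=(k_g,v_g)$, I would split the difference as $\Psi_\lambda(h)-\Psi_\lambda(g)=A+B$ with $A=\left(\exp(v_h/\lambda)-\exp(v_g/\lambda)\right)k_h$ and $B=\exp(v_g/\lambda)(k_h-k_g)$. Orthogonality of the representation gives $\norm{B}_F=\norm{k_h-k_g}_F$, which is exactly the $K$-contribution to $d_H(h,g)$, while Lemma~\ref{lemma:exp_dist_bound} (applicable since the scaled $\mathfrc{p}$-components lie inside the injectivity radius) together with $\exp(v/\lambda)=I+v/\lambda+\mathcal{O}(\lambda^{-2})$ gives $\norm{A}_F=\norm{\exp(v_h/\lambda)-\exp(v_g/\lambda)}_F=\mathcal{O}(\lambda^{-1})$, hence $\norm{A}_F^2=\mathcal{O}(\lambda^{-2})$.

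The delicate point, and the main obstacle, is the cross term $2\langle A,B\rangle_F$, for which a crude Cauchy--Schwarz bound only gives $\mathcal{O}(\lambda^{-1})$. I would compute its leading order explicitly. Substituting the first-order expansions $A=\lambda^{-1}(v_h-v_g)k_h+\mathcal{O}(\lambda^{-2})$ and $B=(k_h-k_g)+\mathcal{O}(\lambda^{-1})$ and using that $v_h-v_g\in\mathfrc{p}$ is skew-symmetric, the $\mathcal{O}(\lambda^{-1})$ part of $2\langle A,B\rangle_F$ is proportional to $\tr\left((v_h-v_g)\,k_gk_h^{T}\right)-\tr\left(v_h-v_g\right)$. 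Both terms have the form $\tr(Xk)$ with $X=v_h-v_g\in\mathfrc{p}$ and $k\in K$ (namely $k=k_gk_h^{T}$ and $k=I$). Invoking the orthogonality between the two Cartan components---in the matrix realization the Cartan involution acts by conjugation with a fixed involutive orthogonal matrix $S$ (for instance $S=\mathrm{diag}(I_d,-1)$ for $\se(d)$), satisfying $SXS=-X$ and $SkS=k$---invariance of the trace under conjugation yields $\tr(Xk)=\tr\left((SXS)(SkS)\right)=-\tr(Xk)$, so $\tr(Xk)=0$. Hence the leading term vanishes and $2\langle A,B\rangle_F=\mathcal{O}(\lambda^{-2})$.

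Collecting the estimates, $\norm{\Psi_\lambda(h)-\Psi_\lambda(g)}_F^2=\norm{k_h-k_g}_F^2+\mathcal{O}(\lambda^{-2})$, and bounding $\norm{k_h-k_g}_F^2\le\norm{k_h-k_g}_F^2+\norm{v_h-v_g}_V^2=\left(d_H(h,g)\right)^2$ gives the claimed inequality. The bound is loose in the $\mathfrc{p}$-direction precisely because the translational parts are compressed by $\lambda^{-1}$ on the compact side. Finally, every $\mathcal{O}(\lambda^{-2})$ coefficient above comes either from the fixed Taylor coefficients of $\exp$ or from the commutator term of Proposition~\ref{prop:app_homo_contraction}; all are independent of $\lambda$ and governed by $\comm{g_1}{g_2}$, as asserted.
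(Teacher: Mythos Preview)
Your argument is correct and in fact yields a sharper intermediate estimate than the paper's. Both proofs begin the same way, invoking the exact-inverse and approximated-homomorphism parts of \eqref{eqn:app_homo} to replace $\Psi_\lambda(g_1)\Psi_\lambda(g_2)^{-1}$ by $\Psi_\lambda(h)$ up to an $\mathcal{O}(\lambda^{-2})$ error. From there, however, the paper works with the \emph{unsquared} norm and proceeds via two auxiliary lemmas: Lemma~\ref{lemma:orthogonal_mat_dist}, a triangle-type bound $\norm{Q_1P_1-Q_2P_2}_F\le\norm{Q_1-Q_2}_F+\norm{P_1-P_2}_F$ for orthogonal matrices, followed by Lemma~\ref{lemma:exp_dist_bound} (a BCH-based bound $\norm{\exp(p_1/\lambda)-\exp(p_2/\lambda)}\le\norm{p_1-p_2}+\mathcal{O}(\lambda^{-2})$), arriving at $\norm{k_h-k_g}_F+\norm{v_h-v_g}+\mathcal{O}(\lambda^{-2})$.

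You instead expand $\norm{\Psi_\lambda(h)-\Psi_\lambda(g)}_F^2$ directly, and the substantive new idea is your treatment of the cross term $2\langle A,B\rangle_F$: using that the Cartan involution is realized by conjugation with an orthogonal involution $S$ fixing $K$ and negating $\mathfrc{p}$, you get $\tr(Xk)=0$ for $X\in\mathfrc{p}$, $k\in K$, so the $\mathcal{O}(\lambda^{-1})$ part of the cross term vanishes identically. This buys you the tighter equality $\norm{\Psi_\lambda(h)-\Psi_\lambda(g)}_F^2=\norm{k_h-k_g}_F^2+\mathcal{O}(\lambda^{-2})$ (the translational contribution is entirely absorbed in the remainder), whereas the paper's route retains the full $\norm{v_h-v_g}$ at leading order. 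Your approach thus dispenses with both auxiliary lemmas and matches the squared statement more naturally; the paper's approach is coarser but perhaps more transparent, relying only on triangle inequalities and the standard BCH expansion.
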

Note that the condition above on sufficiently closeness of elements is always true for large enough $\lambda$. The proof of the proposition is given in supplementary, Section~\ref{app:proof_of_norms_dist_bound}.

\subsection{Additional analysis of synchronization via contractions} \label{subsec:sync_properties}

In the rest of this section, we aim to further study the effect of mapping the measurements using the contraction map. In this subsection, we focus on the case without the presence of noise, where in the next subsection we examine the influence of multiplicative noise on the mapped measurements. 

One feature of many synchronization methods, such as the spectral method, is the exactness of solution in the case of no noise (where a sufficiently amount of data is available). Recall for the noiseless case, the naive solution of constructing spanning tree is sufficient, so from a design point of view, we aim to apply synchronization for cases of corrupted (either by noise, outliers or missing data) sets of measurements. Nevertheless, it is interesting from the analysis point of view to understand the contraction based synchronization for this case. We start with a lemma.

\begin{lemma} \label{lemma:multiplicative_Psi_homo}
Let $g_1 = (k_1,v_1) $ and $g_2=(k_2,v_2)$ be element of a Cartan motion group $\Gc$. Then,
\[ \Psi_\lambda \left( g_1 g_2^{-1} \right)  = \Psi_\lambda \left( g_1 \right) Q (\Psi_\lambda \left( g_2 \right))^{T} , \]
where 
\[ Q = \prod_{m=2}^\infty \exp\left( Z_m(\ad{k_1^T}{v_1/\lambda},\ad{k_2^T}{v_2/ \lambda}) \right) , \]  
and $Z_m$ is a homogeneous Lie polynomial (nested commutators) of degree $m$.
\end{lemma}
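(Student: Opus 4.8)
The plan is to reduce the asserted identity to an explicit triple product of exponentials and then to recognize that product as a Zassenhaus-type factorization whose first-order term cancels. First I would compute $g_1g_2^{-1}$ directly from the group law \eqref{eqn:Cartan_action}. Using $g_2^{-1}=(k_2^{-1},\ad{k_2^{-1}}{-v_2})$ (as in the proof of Proposition~\ref{prop:app_homo_contraction}) together with the fact that $\ad{}{\cdot}$ is a homomorphism, $\ad{a}{}\ad{b}{}=\ad{ab}{}$, one gets $g_1g_2^{-1}=\left(k_1k_2^{-1},\,v_1-\ad{k_1k_2^{-1}}{v_2}\right)$, and hence
\[ \Psi_\lambda(g_1g_2^{-1}) = \exp\left(\tfrac{1}{\lambda}\left(v_1-\ad{k_1k_2^{-1}}{v_2}\right)\right) k_1k_2^{-1}. \]

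Next I would isolate $Q$. Since every element of $\Gb$ is identified with an orthogonal matrix, $\Psi_\lambda(g_2)^{T}=\Psi_\lambda(g_2)^{-1}$, and since the inverse is exact for our contraction (the first relation in \eqref{eqn:app_homo}), the defining equation is equivalent to $Q=\Psi_\lambda(g_1)^{-1}\,\Psi_\lambda(g_1g_2^{-1})\,\Psi_\lambda(g_2)$. Substituting the explicit forms and repeatedly using the commutativity of $\exp$ with $\ad{}{\cdot}$ (plus $\ad{k^{-1}}{}=\ad{k^{T}}{}$, valid since $K$ is compact) to move every $k$-factor to the outside, the rotational parts collapse into a single conjugation by $k_1^{-1}$ and the trailing rotation produces one exponential, leaving
\[ Q=\exp(-X)\,\exp(X-Y)\,\exp(Y),\qquad X=\ad{k_1^{T}}{v_1}/\lambda,\quad Y=\ad{k_2^{T}}{v_2}/\lambda, \]
which are precisely the two arguments in the statement. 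This step is routine bookkeeping once the $\ad{}{\cdot}$ identities are applied.

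The heart of the proof is the final step: showing this triple product factorizes as $\prod_{m=2}^{\infty}\exp(Z_m(X,Y))$ with each $Z_m$ a nested-commutator polynomial homogeneous of degree $m$. The enabling observation is that the degree-one part of $\log Q$ is $(-X)+(X-Y)+Y=0$; hence by the Baker--Campbell--Hausdorff theorem $\log Q$ is a Lie series in $X,Y$ whose lowest term is of degree two (indeed $\tfrac12\comm{X}{Y}$). I would then peel off homogeneous factors recursively, in the spirit of the Zassenhaus formula \cite{casas2012efficient}: take $Z_2$ to be the degree-two part of $\log Q$, note via BCH that $\log\!\left(\exp(-Z_2)Q\right)$ is again a Lie series but now starting at degree three, set $Z_3$ equal to its degree-three part, and continue. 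At each stage BCH guarantees the remaining logarithm is a Lie element, so every $Z_m$ is automatically a homogeneous Lie polynomial of degree $m$, as claimed.

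The one genuine technical obstacle is convergence of the infinite product, which — exactly as in Proposition~\ref{prop:app_homo_contraction} — is not automatic but holds once $\lambda$ is large enough that $\norm{X}+\norm{Y}$ falls below the Zassenhaus radius $r\approx 0.59$. For such $\lambda$ the recursion converges, and since $X,Y=\mathcal{O}(\lambda^{-1})$ the homogeneity forces $Z_m=\mathcal{O}(\lambda^{-m})$, making the tail summable; the leading factor $Z_2=\tfrac12\comm{X}{Y}=\mathcal{O}(\lambda^{-2})$ is moreover consistent with the approximated-homomorphism estimate \eqref{eqn:app_homo}.
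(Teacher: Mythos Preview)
Your proposal is correct and follows essentially the same approach as the paper: both compute $Q=\Psi_\lambda(g_1)^{-1}\Psi_\lambda(g_1g_2^{-1})\Psi_\lambda(g_2)$ directly, push the $k_i$ factors through using $\exp\circ\ad{}{\cdot}=\ad{}{\cdot}\circ\exp$, and arrive at the identical triple product $\exp(-X)\exp(X-Y)\exp(Y)$ with $X=\ad{k_1^{T}}{v_1/\lambda}$, $Y=\ad{k_2^{T}}{v_2/\lambda}$. The only cosmetic difference is the last step: the paper applies the Zassenhaus formula once to $\exp((X-Y)+Y)$ so that the leading $\exp(-X)$ cancels, whereas you obtain the same factorization by a recursive BCH peeling after noting the degree-one part of $\log Q$ vanishes.
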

\begin{proof}
By definition (recall that we identify $\Psi_\lambda(\cdot)$ with its orthogonal representation)
\begin{eqnarray*}
 Q &=& \Psi_\lambda \left( g_1 \right)^T \Psi_\lambda \left( g_1g_2^{-1} \right) \Psi_\lambda \left( g_2 \right) \\
 & =& k_1^T\exp(\lambda^{-1}(-v_1)) \exp(\lambda^{-1}(v_1-k_1 k_2^Tv_2)) k_1 k_2^T \exp(\lambda^{-1}(v_2))k_2 \\
  & =& \exp(\lambda^{-1}(\ad{k_1^T}{-v_1})) \exp(\lambda^{-1}(\ad{k_1^T}{v_1}-\ad{k_2^T}{v_2}))\exp(\lambda^{-1}(\ad{k_2^T}{v_2})) 
\end{eqnarray*}
The last equation is of the form $\exp(-A)\exp(A+B)\exp(B)$, so applying the Zassenhaus formula on the most right product yields $\exp(A)$ multiply by an infinite product of exponentials of nested commutators. Note that $\comm{A+B}{B} = \comm{A}{B}$.
\end{proof}

As a conclusion from Lemma~\ref{lemma:multiplicative_Psi_homo} we learn that only by applying the contraction map we suffer from some distortion of the data (by a multiplicative factor). 
\begin{corollary} \label{cor:clean_measure_analysis}
In the case of clean measurements $g_{ij} = g_ig_j^{-1} \in \Gc$, the contraction map $\Psi_\lambda$ yields the data $\Psi_\lambda \left( g_i \right)Q_{i,j}\Psi_\lambda \left( g_j \right)$. Namely, the elementwise mapping $\Psi_\lambda \left( g_i \right)\Psi_\lambda \left( g_j \right)$ is distorted by an orthogonal matrix $Q_{ij}$. The additional matrix $Q_{ij}$ lies in a distance bounded by $\mathcal{O}(\lambda^{-2}\comm{g_i}{g_j})$ from the identity matrix.
\end{corollary}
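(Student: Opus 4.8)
The plan is to read the first assertion straight off Lemma~\ref{lemma:multiplicative_Psi_homo} and then to extract the leading order of the correction factor. Setting $g_1 = g_i$ and $g_2 = g_j$ in that lemma and recalling $g_{ij} = g_ig_j^{-1}$ yields at once
\[ \Psi_\lambda\left(g_{ij}\right) = \Psi_\lambda\left(g_i\right)\, Q_{ij}\, \left(\Psi_\lambda\left(g_j\right)\right)^T , \qquad Q_{ij} = \prod_{m=2}^\infty \exp\left( Z_m\left(\ad{k_i^T}{v_i/\lambda},\ad{k_j^T}{v_j/\lambda}\right) \right) . \]
Since each $\Psi_\lambda\left(g_j\right)$ is identified with an orthogonal matrix, $\left(\Psi_\lambda\left(g_j\right)\right)^T = \left(\Psi_\lambda\left(g_j\right)\right)^{-1}$, which is exactly the ratio form in the statement; so the first claim requires nothing beyond the lemma.

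Before bounding $Q_{ij}$ I would verify that it is genuinely orthogonal. The arguments $\ad{k_i^T}{v_i/\lambda}$ lie in $\mathfrc{p}\subset\mathfrc{g}$ (recall $\mathfrc{p}$ is closed under the adjoint action), and every nested commutator $Z_m$ of elements of $\mathfrc{g}$ again lies in $\mathfrc{g}$. As $\mathfrc{g}$ is the Lie algebra of the compact group $\Gb$, its elements are skew-symmetric in the orthogonal representation, so each factor $\exp\left(Z_m(\cdot)\right)$ is orthogonal and hence so is the (convergent) product $Q_{ij}$. Convergence of this Zassenhaus product holds once $\lambda$ is large enough, in the same regime invoked in Proposition~\ref{prop:app_homo_contraction}.

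For the distance bound I would set $A = \ad{k_i^T}{v_i}/\lambda$ and $B = \ad{k_j^T}{v_j}/\lambda$, both of order $\lambda^{-1}$, and expand the product directly rather than taking a logarithm. Because the product begins at $m=2$ and $Z_m$ is homogeneous of degree $m$ in $(A,B)$, each $Z_m(A,B)$ is of order $\lambda^{-m}$; multiplying the factors gives $Q_{ij} = I + Z_2(A,B) + \mathcal{O}(\lambda^{-3})$, so that $Q_{ij} - I = Z_2(A,B) + \mathcal{O}(\lambda^{-3})$ with $Z_2$ a scalar multiple of $\comm{A}{B} = \lambda^{-2}\comm{\ad{k_i^T}{v_i}}{\ad{k_j^T}{v_j}}$. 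Measuring in the Frobenius norm and using $\norm{\ad{k}{v}} = \norm{v}$ for orthogonal $k$ (as in Proposition~\ref{prop:app_homo_contraction}) then gives $\norm{Q_{ij} - I}_F = \mathcal{O}(\lambda^{-2})$ with leading coefficient controlled by $\norm{\comm{\ad{k_i^T}{v_i}}{\ad{k_j^T}{v_j}}}$ --- precisely the quantity abbreviated $\comm{g_i}{g_j}$ in the statement.

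The main obstacle is the bookkeeping in this last step: one must confirm that the single degree-two commutator is the \emph{unique} source of the $\lambda^{-2}$ term, with all higher nested commutators and all cross products between distinct factors being $\mathcal{O}(\lambda^{-3})$ or smaller, and that passing from the product to $Q_{ij} - I$ introduces no spurious lower-order contribution. Both are routine given the homogeneity of the $Z_m$; the only remaining care is to ensure the tail of the product is summable, which again follows from the large-$\lambda$ convergence condition already used above.
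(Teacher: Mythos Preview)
Your proposal is correct and follows essentially the same route as the paper: the first assertion is read directly from Lemma~\ref{lemma:multiplicative_Psi_homo}, and the distance bound is obtained by expanding the Zassenhaus product and identifying the leading $\lambda^{-2}$ term as the single degree-two commutator. The paper's own justification is in fact terser than yours---it merely remarks that the bound follows ``by either rewriting the leading term of the Taylor expansion or by considering the leading matrix in the product and its geodesic distance to the identity''---so your explicit tracking of the homogeneity of the $Z_m$ and the verification that $Q_{ij}$ is orthogonal add useful detail without departing from the intended argument.
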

Bounding $Q_{ij}$ can be done as in the proof of Lemma~\ref{lemma:exp_dist_bound}, by either rewriting the leading term of the Taylor expansion or by considering the leading matrix in the product and its geodesic distance to the identity. 

The latter corollary gives further understanding about the nature of contraction maps for synchronization. It also provides an explanation for a typical numerical loss of a digit in synchronization error when acting on a full set of pure clean samples, see also Section~\ref{sec:numerical_examples}. While it is true that in the noiseless case one should use an unbiased method, such as the spanning tree method (see Section~\ref{sec:intro}), for noisy measurements it might be beneficial to estimate the group elements with a biased estimator that incurs a smaller variance error. Synchronization via contraction, which does not reconstruct the solution in case of clean measurements, does lead to improvements in many more realistic scenarios, as we will see later in Section~\ref{sec:numerical_examples}.

\subsection{Notes on noisy synchronization}

The analysis of synchronization in presence of noise, mostly in the case of the rotation group, has recieved a lot of attention in the past few years, e.g., \cite{boumal2014thesis, singer2011angular, tzeneva2011global, wang2013exact}. One common approach for analyzing global methods for synchronization is to study the spectrum of the measurement matrix, that is $M$ in \eqref{eq:MeasurementMatrixM}. The largest eigenvalues are associated with the synchronization solution, and are ideally separated from the rest of the spectrum which includes the noise. In other words, a gap between the largest eigenvalues and the rest of the spectrum means that a meaningful information is dominating the noise and a solution can be extracted. The eigenvalue separation fades away as noise increases and so does the accuracy of the solution. Other approaches for analysis include adapting tools from information theory, such as the analysis done in \cite{singer2011angular}, or the construction of lower bounds such as Cram{\'e}r-Rao \cite{boumal2014cramer}. Since the analysis of synchronization over compact groups has already been established, the key step in our analysis of synchronization via contraction lies in the compactification stage.

Let $g_{ij}$ be a noisy measurement, contaminated with a multiplicative noise matrix $N_{ij} \in \Gc$,
\begin{equation} \label{eqn:noisemodel}
g_{ij} = g_i N_{ij} g_j^{-1} .
\end{equation}  
Such a noise model is used in \cite{boumal2014cramer} and is often adapted by practitioners as well, for example over $\se(3)$ see \cite{barfoot2014associating}. In this multiplicative model we actually assume that the graph of the data $\mathcal{G}$ is contaminated with noise on its edges (this is not always the case in applications, e.g., \cite{singer2011viewing} but it facilitates the analysis comparing to the case of noisy vertices). 

A spectral analysis of rotation synchronization, for the case of noise on edges, is given in \cite[Chapter 5]{boumal2014thesis, singer2011angular}. The essence is as follows. Let $R$ be a block diagonal matrix, with its diagonal blocks $R_i$ are the unknown rotations. Then, each noisy measurement $g_{ij}$ can be viewed as the $ij$ block of $M= RWR^T$, where $W$ is a block matrix with the noise blocks $W_{ij}$. Since $M$ and $W$ are similar, the spectrum of $M$ is the same as $W$, which justifies studying $W$ for deriving conclusions on the spectrum of $M$. The goal is to deduce a condition that guarantees a separation of leading eigenvalues, as described above (meaning that the largest eigenvalues are not dominated by the noise). It is shown in \cite[Chapter 5]{boumal2014thesis} that if $W_{ij}$ is drawn from a distribution that has a density function which is both a class function and centered around the identity with $\E{W_{ij}} = \beta I$, $0\le \beta \le 1 $, a sufficient condition is  
\begin{equation} \label{eqn:analysis_cond}
\beta > \frac{1}{\sqrt{n}} .
\end{equation}
Next, we study some analogue requirements on the noise block $N_{ij}$ to satisfy similar conditions after applying the contraction mapping $\Psi_\lambda$.

Denote the components of each group element by $g_i = (k_i,v_i)$ and $N_{ij} = (\upsilon_{ij},a_{ij})$. Also, for simplicity we denote $\exp_\lambda(\cdot) = \exp(\cdot/\lambda)$. Then, a direct calculation of \eqref{eqn:noisemodel} by \eqref{eqn:Cartan_action} yields
\[ g_{ij} = \left( k_i \upsilon_{ij} k_j^T, v_i + \ad{k_i}{a_{ij}}-\ad{k_i \upsilon_{ij} k_j^T}{v_j} \right). \]
Some simplification using the commutativity of adjoint operator and the exponential map, and the definition of $\Psi_\lambda$ mean we can rewrite $r_{ij} =\Psi_\lambda(g_{ij}) \in \Gb$ as
\[ r_{ij} = k_i \left[ \exp_\lambda(\ad{k^T_i}{v_{i}}+a_{ij}-\ad{\upsilon_{ij} k_j^T}{v_j} ) \upsilon_{ij} \right] k_j^T . \]
One interpretation as a noisy data matrix for synchronization is to consider $\hat{W}_{ij} = \exp_\lambda(\ad{k^T_i}{v_{i}}+a_{ij}-\ad{\upsilon_{ij} k_j^T}{v_j} ) \upsilon_{ij} \in \Gb$ with the unknown $\{  k_i \}_{i=1,\ldots,n}$. However, further examination reveals that considering a slightly different matrix 
\begin{equation} \label{eqn:WnoisyCase}
 W_{ij} = \exp_\lambda(-\ad{k^T_i}{v_{i}})\exp_\lambda(\ad{k^T_i}{v_{i}}+a_{ij}-\ad{\upsilon_{ij} k_j^T}{v_j})\exp_\lambda(\ad{\upsilon_{ij} k_j^T}{v_j}) \upsilon_{ij}  
\end{equation}
leads to the more natural form $r_{ij} = \Psi_\lambda\left( g_i \right)  W_{ij}  \Psi_\lambda\left( g_j \right)^T$.

To simplify $W_{ij}$ of \eqref{eqn:WnoisyCase}, we apply the Baker-Campbell-Hausdorff (BCH) formula to have
\[  \exp_\lambda(\ad{k^T_i}{v_{i}}+a_{ij}-\ad{\upsilon_{ij} k_j^T}{v_j})\exp_\lambda(\ad{\upsilon_{ij} k_j^T}{v_j}) \upsilon_{ij} = 
\exp( (\ad{k^T_i}{v_{i}}+a_{ij})/\lambda + X ) , \]
where $X = \frac{1}{2}\comm{(\ad{k^T_i}{v_{i}}+a_{ij})/ \lambda }{(\ad{\upsilon_{ij} k_j^T}{v_j})/ \lambda } + \cdots $ is the series of nested commutators, with the leading term of order $\lambda^{-2} \left(  \comm{\ad{k^T_i}{v_{i}}}{\ad{\upsilon_{ij} k_j^T}{v_j}} + \comm{a_{ij}}{\ad{\upsilon_{ij} k_j^T}{v_j}}  \right) $. Then, a second application of the BCH formula implies that
\begin{equation} \label{eqn:SimplifiedNoiseMat}
  W_{ij} = \exp(a_{ij}/\lambda+Y)\upsilon_{ij} , 
\end{equation}
where $Y  =  \frac{1}{2}\comm{-\ad{k^T_i}{v_{i}}/\lambda}{(\ad{k^T_i}{v_{i}}+a_{ij})/\lambda + X} + \cdots $ is the corresponding series of nested commutators. Combine $X$ with the above gives the explicit form of the commutator as
\[  \lambda^{-2} \left(  \comm{a_{ij}}{\ad{k^T_i}{v_{i}}}   \right) + \lambda^{-3} \left(   \comm{\comm{\ad{k^T_i}{v_{i}}}{\ad{\upsilon_{ij} k_j^T}{v_j}} + \comm{a_{ij}}{\ad{\upsilon_{ij} k_j^T}{v_j}}}{\ad{k^T_i}{v_{i}}}      \right)   . \]
The next order nested commutator term in $Y$ corresponds to $\lambda^{-3}$ and $\lambda^{-4}$ and so on for higher order terms. One conclusion is that if the translational part of the noise is of zero expectation, then \eqref{eqn:SimplifiedNoiseMat} is in expectation a third order approximation to the Cartan decomposition of $W_{ij}$,
\begin{corollary} \label{cor:appCartanDecom}
Let $a_{ij}$ be a random variable independent of $k_i, k_j$ and $\upsilon_{ij}$, and satisfy $\E{a_{ij}}=0$. Then,
\[  \E{a_{ij}+Y} + \mathcal{O}\left( \lambda^{-4} \right) \in \mathfrc{p} . \]
\end{corollary}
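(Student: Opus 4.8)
My plan is to expand the exponent $a_{ij}+Y$ as a Lie series in the three $\mathfrc{p}$-valued generators that appear in the double BCH computation preceding the corollary, and then to argue that, after taking expectations, every contribution that is not already in $\mathfrc{p}$ is pushed to order $\lambda^{-4}$. The two ingredients are the $\mathbb{Z}/2$ grading coming from the Cartan involution and the zero-mean hypothesis on $a_{ij}$.

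First I would record that the three generators $a_{ij}$, $\ad{k_i^T}{v_i}$ and $\ad{\upsilon_{ij}k_j^T}{v_j}$ all lie in $\mathfrc{p}$: the translational noise $a_{ij}\in V=\mathfrc{p}$ by definition, and $\mathfrc{p}$ is preserved by the adjoint action of $K$. Let $\theta$ be the Cartan involution, so that $\theta=+1$ on $\mathfrc{t}$ and $\theta=-1$ on $\mathfrc{p}$; since $\theta$ is a Lie algebra automorphism, any nested commutator $B$ of $m$ of these generators satisfies $\theta B=(-1)^m B$, so $B\in\mathfrc{p}$ when $m$ is odd and $B\in\mathfrc{t}$ when $m$ is even. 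Because each bracket of $m$ generators also carries the prefactor $\lambda^{-m}$, the $\mathfrc{p}$/$\mathfrc{t}$ grading is synchronized with the parity of the power of $1/\lambda$. Hence the degree-one term $a_{ij}$ (order $\lambda^{-1}$) and the degree-three term at order $\lambda^{-3}$ are automatically in $\mathfrc{p}$, while the only obstruction through order $\lambda^{-3}$ is the single even, degree-two term at order $\lambda^{-2}$.

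The crux is therefore to handle that degree-two term, which by the displayed expansion is $\comm{a_{ij}}{\ad{k_i^T}{v_i}}\in\mathfrc{t}$. Here the hypotheses enter: it is linear in $a_{ij}$, and since $a_{ij}$ is independent of $k_i$ with $\E{a_{ij}}=0$, the identity $\E{\comm{a_{ij}}{\ad{k_i^T}{v_i}}}=\comm{\E{a_{ij}}}{\E{\ad{k_i^T}{v_i}}}=0$ holds. Taking expectations term by term then removes this lone $\mathfrc{t}$-valued contribution up to order $\lambda^{-3}$; the next commutator that can land in $\mathfrc{t}$ is of degree four, hence $\mathcal{O}(\lambda^{-4})$. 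Collecting the $\mathfrc{p}$-valued survivors gives $\E{a_{ij}+Y}+\mathcal{O}(\lambda^{-4})\in\mathfrc{p}$.

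The step I expect to be the most delicate is the bookkeeping at order $\lambda^{-2}$. The parity argument guarantees that every even-degree bracket lies in $\mathfrc{t}$, so the whole estimate hinges on the assertion that the degree-two part of the exponent reduces, after expectation, to a single commutator that is linear in the zero-mean $a_{ij}$. Any $\mathfrc{t}$-valued, $a_{ij}$-independent bracket surviving at order $\lambda^{-2}$ would not be killed by $\E{a_{ij}}=0$ and would break the claim, so I would take care to verify this by re-tracing both applications of the BCH formula and confirming exactly which degree-two brackets actually appear.
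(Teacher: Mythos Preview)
Your proposal is correct and follows essentially the same route as the paper's proof. The paper also argues by (i) using the bracket relations $\comm{\mathfrc{p}}{\mathfrc{p}}\subset\mathfrc{t}$ and $\comm{\mathfrc{p}}{\mathfrc{t}}\subset\mathfrc{p}$ to place the odd-degree (in particular the $\lambda^{-3}$) nested commutators in $\mathfrc{p}$, and (ii) observing that the sole $\lambda^{-2}$ term $\comm{a_{ij}}{\ad{k_i^T}{v_i}}$ is linear in $a_{ij}$ and hence has zero expectation by independence; your phrasing via the Cartan involution $\theta$ as a Lie-algebra automorphism is just a clean way to encode (i), and the bookkeeping worry you flag at order $\lambda^{-2}$ is exactly the computation carried out in the text immediately preceding the corollary, where the self-bracket $\comm{\ad{k_i^T}{v_i}}{\ad{k_i^T}{v_i}}$ vanishes and leaves only the $a_{ij}$-linear commutator.
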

\begin{proof}
The proof follows from two arguments regarding the series $Y$. First, for any independent factor $b \in \mathfrc{g}$, the commutator $\comm{a_{ij}}{b}$ is linear in $a_{ij}$ at each entry. So, $\E{a_{ij}} = 0$ implies $\E{\comm{a_{ij}}{b}} = 0$. Second, we know that $\comm{\mathfrc{p}}{\mathfrc{p}} \subset \mathfrc{t}$ and $\comm{\mathfrc{p}}{\mathfrc{t}} \subset \mathfrc{p}$, meaning that the remaining nonzero third order nested commutators like $\comm{\comm{\ad{k^T_i}{v_{i}}}{\ad{\upsilon_{ij} k_j^T}{v_j}}}{\ad{k^T_i}{v_{i}}}$ are in $\mathfrc{p}$. Namely, the leading order outside $\mathfrc{p}$ is of order $\lambda^{-4} $.
\end{proof}

The proof of Corollary~\ref{cor:appCartanDecom} shows that if $a_{ij}$ has a zero expectation, mapping the noise matrix $\Psi_\lambda \left(N_{ij} \right) = \exp \left(  a_{ij}/\lambda \right)   \upsilon_{ij} $, which is already a matrix in its Cartan form, is not too far away from the Cartan form, in expectation, of $W_{ij}$ of \eqref{eqn:SimplifiedNoiseMat}. For this case, which is a first order perturbation in $\lambda$ to $W_{ij}$, we can adapt conditions like \eqref{eqn:analysis_cond} from rotation synchronization to synchronization over the Euclidean group. In the following, for a cleaner presentation, we conclude the noise analysis for the case of $\se(d)$. The overall arguments can be carried with some minor modifications to a general Cartan motion group. The model is of a full graph, that is all measurements are available. We use $\circ$ for Hadamard (entrywise) matrix product.

\begin{prop} \label{prop:noise_analysis}
Denote by $W_{ij} = \Psi_\lambda \left(N_{ij} \right) = \exp \left(  a_{ij}/\lambda \right) \upsilon_{ij} $ the $ij^\text{th}$ block of the noise matrix $W$ and consider the rotation synchronization problem defined by the measurements matrix $RWR^T$. Assume all $a_{ij}$, and $\upsilon_{ij}$ are independent random variables over $V=\mathbb{R}^d$ and $K=\so(d)$ with probability density functions (PDF) $f_V$ and $f_K$, respectively. Furthermore, we assume each PDF is smooth and non-negative such that $\E{\upsilon_{ij}} = \beta I$, $0 \le \beta \le 1$ and $\E{a_{ij}}=0$ with even $f_V$. Then, the following condition guarantees a spectral gap between the eigenvalues due to noise and the leading eigenvalue that corresponds to the the synchronization solution,
\[  \min\{  (\gamma-\alpha_1)\beta, \gamma-\alpha_2) \}>\frac{1}{\sqrt{n}} . \]
Here $\alpha_1>0$ is $\alpha_1I = \int_{\norm{v}\le\pi}  \left( I \circ vv^T \right) h(v)dv $, and $\alpha_2=\int_{ \norm{v}\le\pi} \norm{v}^2 h(v)dv >0 $, and $\gamma = \int_{ \norm{v}\le\pi} h(v)dv$ with respect to the density function $h(x) = \frac{1-\cos(\norm{x} )}{\norm{x}^2} f_V(x) \norm{(I-xx^t)^{-\frac{1}{2}}}$.
\end{prop}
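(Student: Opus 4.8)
The plan is to follow the spectral-gap template used for rotation synchronization in \cite{boumal2014thesis,singer2011angular}, adapting it to the block structure created by the contraction, and in particular generalizing the criterion \eqref{eqn:analysis_cond}. Since $W$ is self-adjoint ($W_{ji}=W_{ij}^T$) and $M=RWR^T$ is similar to $W$, it suffices to locate the leading eigenvalues of $W=\E{W}+\left(W-\E{W}\right)$. The first task is therefore to compute $\E{W_{ij}}$ explicitly. Using the spectral structure of the embedded translation established in \eqref{eqn:eigenvalueofB}, the matrix $\exp(a_{ij}/\lambda)\in\so(d+1)$ obeys a Rodrigues-type identity, so that with $\theta=\norm{a_{ij}}/\lambda$ its four blocks are
\[
\exp(a_{ij}/\lambda)=\left[\begin{smallmatrix} \eye_d-\tfrac{1-\cos\theta}{\norm{a_{ij}}^2}a_{ij}a_{ij}^T & \tfrac{\sin\theta}{\norm{a_{ij}}}a_{ij}\\ -\tfrac{\sin\theta}{\norm{a_{ij}}}a_{ij}^T & \cos\theta\end{smallmatrix}\right].
\]
Multiplying on the right by $\upsilon_{ij}=\left[\begin{smallmatrix}\upsilon_{ij}&0\\0&1\end{smallmatrix}\right]$ and taking expectations, independence of $a_{ij}$ and $\upsilon_{ij}$ lets the factors separate; evenness of $f_V$ annihilates the two off-diagonal blocks (each is odd in $a_{ij}$), while $\E{\upsilon_{ij}}=\beta\eye_d$ supplies the rotational factor. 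Hence $\E{W_{ij}}=B$ is block diagonal, with a $d\times d$ upper block of the form $\beta(\gamma-\alpha_1)\eye_d$ and scalar lower block $\gamma-\alpha_2$.

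The identification of these two blocks with the stated constants is the computational core. I would pass to the scaled variable $v=a_{ij}/\lambda$, whose admissible range is the ball $\norm{v}\le\pi$ dictated by the injectivity condition \eqref{eqn:lambda_cond}, and absorb the resulting Jacobian together with the geometric weight $\tfrac{1-\cos\norm{v}}{\norm{v}^2}$ into the effective density $h$. The diagonal part of $\E{\tfrac{1-\cos\theta}{\norm{a_{ij}}^2}a_{ij}a_{ij}^T}$ is then $\int_{\norm{v}\le\pi}(\eye\circ vv^T)h(v)\,dv=\alpha_1\eye$, the identity contribution integrates to $\gamma\eye$, and the scalar block reduces to $\gamma-\alpha_2$; the Hadamard projection in the definition of $\alpha_1$ records that only the diagonal of this symmetric matrix is retained.

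With $B$ in hand, the decomposition $\E{W}=\eye_n\otimes(\eye_{d+1}-B)+J_n\otimes B$, where $J_n$ is the all-ones matrix, diagonalizes along the spectrum of $J_n$: on the signal subspace $\mathbf 1\otimes\R^{d+1}$ the operator acts as $\eye_{d+1}+(n-1)B$, producing the $d+1$ leading eigenvalues $1+(n-1)\beta(\gamma-\alpha_1)$ (multiplicity $d$) and $1+(n-1)(\gamma-\alpha_2)$, while on the orthogonal complement it acts as the bounded $\eye_{d+1}-B$. Because the synchronization solution is carried by all $d+1$ signal eigenvectors, the binding requirement is that the \emph{smallest} of these signal levels separate from the bulk. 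Invoking the concentration estimate $\norm{W-\E{W}}_2=\mathcal{O}(\sqrt n)$ for a self-adjoint matrix with independent, uniformly bounded off-diagonal blocks (as in \cite[Chapter 5]{boumal2014thesis}), the noise floor sits at height $\mathcal{O}(\sqrt n)$; demanding that each signal level exceed it gives $(n-1)\beta(\gamma-\alpha_1)\gtrsim\sqrt n$ and $(n-1)(\gamma-\alpha_2)\gtrsim\sqrt n$, which after dividing by $n$ collapses to $\min\{(\gamma-\alpha_1)\beta,\ \gamma-\alpha_2\}>1/\sqrt n$.

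The main obstacle I anticipate is the exact bookkeeping of the second step: producing the precise weight $h$, including the Jacobian factor $\norm{(\eye-vv^T)^{-1/2}}$, from the change of variables, and justifying that the injectivity truncation $\norm{v}\le\pi$ together with the evenness of $f_V$ really leaves only the diagonal contribution, so that the upper block of $B$ is a genuine multiple of $\eye_d$ with the two claimed eigenvalue branches, rather than a full symmetric matrix whose off-diagonal entries would split the degenerate level. The concentration bound itself, being a direct transcription of the rotation case, should require no essentially new argument.
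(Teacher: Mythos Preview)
Your proposal follows the same architecture as the paper's proof: reduce to computing $\E{W_{ij}}$, show it is the block-diagonal matrix $B=\operatorname{diag}\bigl((\gamma-\alpha_1)\beta\,\eye_d,\ \gamma-\alpha_2\bigr)$ via the Rodrigues expansion of $\exp(a_{ij}/\lambda)$ and independence, and then feed this into the random-matrix concentration bound of \cite[Chapter~5]{boumal2014thesis} to compare the signal eigenvalues of order $nB$ against the $\mathcal{O}(\sqrt n)$ bulk. Your Kronecker decomposition $\E{W}=\eye_n\otimes(\eye_{d+1}-B)+J_n\otimes B$ and the resulting two eigenvalue branches are in fact more explicit than the paper, which at that stage simply invokes \cite{boumal2014thesis} and notes that the non-scalar diagonal of $B$ does not obstruct the argument.

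The one substantive difference is the origin of the weight $\norm{(\eye-vv^T)^{-1/2}}$ in $h$. The paper does \emph{not} obtain it from the scaling $v=a_{ij}/\lambda$; it instead rewrites $\E{W_{ij}}$ as an integral over $\Gb$ with Haar measure, factors through the symmetric space $X=\Gb/K\cong S^d$, and then changes coordinates $p\mapsto v=\log p$ on $X$. That geometric change of variables on the sphere is what produces the Jacobian $\norm{(\eye-vv^T)^{-1/2}}$. Your direct route---integrating $\exp(M_{a/\lambda})f_V(a)\,da$ against Lebesgue measure on $\R^d$---would not generate this factor, which is precisely the bookkeeping obstacle you anticipate in your last paragraph. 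So the homogeneous-space viewpoint in the paper is not decorative: it is the mechanism that accounts for the specific form of $h$ in the statement.
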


The proof is given in supplementary, Section~\ref{app:proof_noise_analysis}.

The new condition of Proposition~\ref{prop:noise_analysis} shows a tradeoff between the amount of noise in the two parts of the data over $\se(d)$ versus the sample size. In addition, it encodes the effect of geometry of the induced homogeneous space $\Gb/K$ as it includes two additional factors that depend on the new, modified density function $h$. The condition shows that robustness to noise increases as the number of group elements is growing, as it allows more variance around the identity. The positivity of $\alpha_1$ and $\alpha_2$, implies the expectation values on (the diagonal parts of) $\E{W_{ij}}$ are bounded by $1$. Indeed, as $f_V$ tends to be concentrated around $0$, which means lower variance on the translational parts, $\alpha_1$ and $\alpha_2$ approach to zero and have a lesser effect on the condition. The expectation term $\beta$ affects the upper $d \times d$ block of each $\E{W_{ij}}$, that is associated with the subgroup $K$. As $f_K$ gets more concentrated around the identity, $\beta$ becomes closer to one. High variance on the element parts of $K$ (in a case of $f_K$ that is concentrated around the identity) implies smaller $\beta$ values and more data is needed to guarantee valid information.

As a final note, one can carry the same proof technique as we use in Proposition~\ref{prop:noise_analysis} to prove a similar result for more general Cartan motion groups. Nevertheless, such a result might have a less explicit form, as both the Jacobian of local coordinates in $\Gb/K$ as well as the exponential map there can be less accessible or lead to more complicated expressions.

\section{Numerical evaluation and its considarations}  \label{sec:numerical_examples}  
In this section, we discuss the numerical evaluation of the method of contraction for synchronization over two instances of Cartan motion groups: the group of rigid-motions and matrix motion group. The discussion goes hand in hand with numerical examples using synthetic and real world data.

\subsection{Preliminaries}
We firstly state how we address three preliminary issues: measuring the error, choosing the parameter $\lambda$ of the contraction, and optimizing the global rotations alignment. All algorithms and examples in this section were implemented in Matlab and are available online in \url{https://github.com/nirsharon/Synchronization-over-Cartan-Motion-Groups}. The numerical examples were executed on a Springdale Linux desktop equipped with 3.2 GHz i7-Intel Core\textsuperscript{TM} processor with 16 GB of memory.

\subsubsection{Measuring errors}
In our examples we choose to evaluate the performance of the algorithms in terms of mean squared error (MSE). However, since the solution of synchronization is non-unique, we explicitly show how we measure this error. Consider $X = \left\{\hat{g}_i\right\}_{i=1}^n$ to be the solution of one algorithm, that is the estimators of $\left\{g_i\right\}_{i=1}^n$. Then, the measure of error for this solution is given by
\begin{equation}
\label{eq:PerfMeasure}
\mse{X} = \frac{1}{n}\min_{ g \in \Gc } \sum_{i=1}^n    d_H \left(  \hat{g}_i g,g_i\right)^2 . 
\end{equation}
Using the above notation on the two components $(k,v)$ of each element, and by the definition of the action \eqref{eqn:Cartan_action}, the sum in \eqref{eq:PerfMeasure} becomes
\[
 \sum_{i=1}^n \left\| \hat{k}_i k - k_i \right\|_K^2 + \left\| \hat{v}_i - \ad{\hat{k}_i}{v} -v_i \right\|_V^2 . \]
Since we optimize over the single element $g=(k,v)$, we can clearly see the target function is separable. In particular, for $\se(d)$ and with the representation of elements as $g = \left[ \begin{smallmatrix} \mu & b \\ \mathbf{0}_{1\times d} & 1\end{smallmatrix} \right] $ we have
\[ \begin{aligned}[t]
	\sum_{i=1}^n \left\| \hat{g}_i g - g_i \right\|_F^2    &=  \sum_{i=1}^n \left\| \hat{\mu}_i\mu - \mu_i \right\|_F^2 + \left\| b - \hat{\mu}_i^T\left(b_i - \hat{b}_i\right) \right\|_2^2\\
  &= \left(n b^Tb - 2b^T\sum_{i=1}^n \hat{\mu}_i^T\left(b_i - \hat{b}_i\right) \right) - 2\tr \left( \mu^T \left( \sum_{i=1}^n \hat{\mu}_i^T\mu_i\right) \right) + C 
      \end{aligned} , \]
where $C = \sum_{i=1}^n \left\| b_i - \hat{b}_i \right\|_2^2 + 4n$ is independent of the minimized element $g$. Therefore, the minimizer is given by $b^* = \frac{1}{n} \sum_{i} \hat{\mu}^T\left(b_i - \hat{b}_i\right)$ and $\mu^* = V\left[\begin{smallmatrix} 1 & 0 \\ 0 & \det \left( VU^T \right) \end{smallmatrix} \right] U^T$, where $\sum_i \hat{\mu}_i^T\mu_i = U\Sigma V^T$ is the SVD of $\sum_i \hat{\mu}_i^T\mu_i$ with $\Sigma_{1,1} \geq \Sigma_{2,2}$, see e.g.,~\cite{hanson1981analysis}. 

The important conclusion is then, measuring the error by~\eqref{eq:PerfMeasure} is feasible to calculate.

\subsubsection{Choosing \texorpdfstring{$\lambda$}{the parameter}  }

The basic tradeoff of varying $\lambda$ is as follows. For large parameter values ($\lambda \gg 1$), the translational part has smaller effect on the data after applying $\Psi_\lambda$ (In fact, by looking at the explicit form of the exponential map, as given in supplementary, Section~\ref{apx:CartanCalculation}, the entries of the exponential roughly perturbs the identity quadratically with respect to the norm of the translational parts). On the other hand, in such cases the mapped measurements $\Psi_\lambda(g_{ij})$ are more consistent as data for synchronization in terms of satisfying the triplet relation $\Psi_\lambda(g_{ij})\Psi_\lambda(g_{j\ell}) \approx \Psi_\lambda(g_{i\ell})$. In contrary, small $\lambda$ values increase the information we gain from the translational part of the data but involves solving non-consistent synchronization problem which often results in large errors. One additional issue we would like to address in choosing the parameter of contraction is to fulfill both \eqref{eqn:lambda_cond} (keeping the data inside the injectivity domain of the exponential map) and the condition for the approximated homomorphism as given in Proposition~\ref{prop:app_homo_contraction}.

We tested numerically the dependency between varying $\lambda$ values and the synchronization error, resulted from our synchronization via contraction. As expected, this dependency is related to the different properties of the data (noise level, sparsity of the graph of data, etc.) as well as to the specific method one uses for the synchronization of rotations. To demonstrate the latter in the case of $\se(d)$, we compare the optimal $\lambda$ value, given as a function of the noise level, between two different rotation synchronization algorithms. One algorithm is the spectral method (see Algorithm~\ref{alg:eigenvector_method}) and the other is synchronization based on maximum-likelihood estimation (MLE) \cite{boumal2013robust}. The ground truth consists of $100$ elements, collected randomly on $\se(3)$. We formed the data by considering a full data graph and contaminated the samples with a multiplicative noise, as in \eqref{eqn:noisemodel} (we elaborate more on that type of noise in the next subsection). The results are given in Figure~\ref{subfig:lambda1}; while a difference exists between the two cases, the overall trend is similar -- the parameter $\lambda$ increases as the noise decreases and vice versa. Namely, the optimal $\lambda$ tends to be found closer to $1$ as the noise increases. As an example of how the error changes in the vicinity of the optimal $\lambda$ value, we present in Figure~\ref{subfig:lambda2} the error as a function of $\lambda$, for the case of EIG, with noise level of about $7 dB$ of SNR. 

\begin{figure}
\centering    
\subfloat[Optimal $\lambda$ values for two different methods for rotations synchronization]{	  \label{subfig:lambda1}  \includegraphics[width=0.3\textwidth]{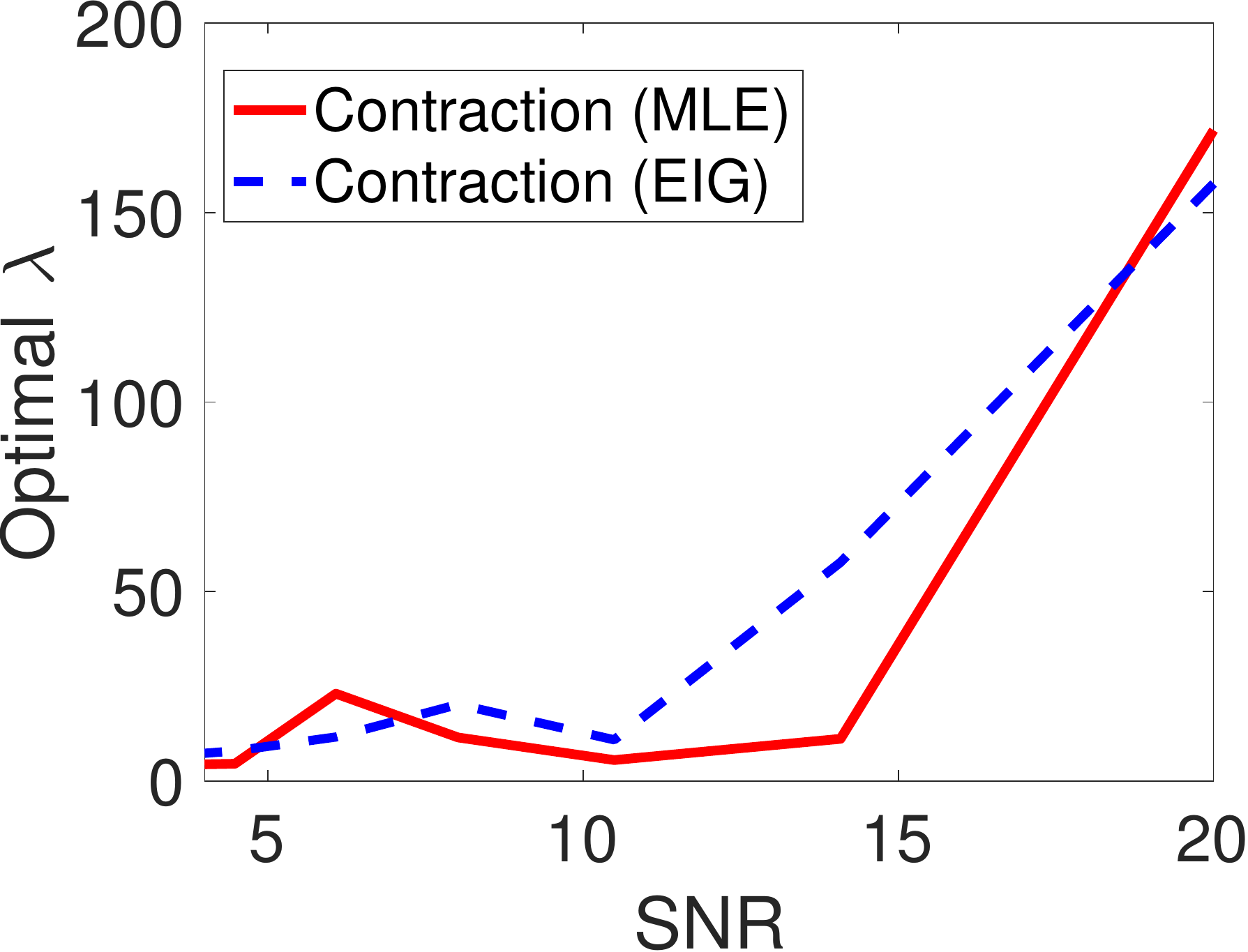}} \qquad
\subfloat[Synchronization error in the vicinity of an optimal $\lambda$ with SNR $\approx 7 dB$ ]{	\label{subfig:lambda2}    \includegraphics[width=0.3\textwidth]{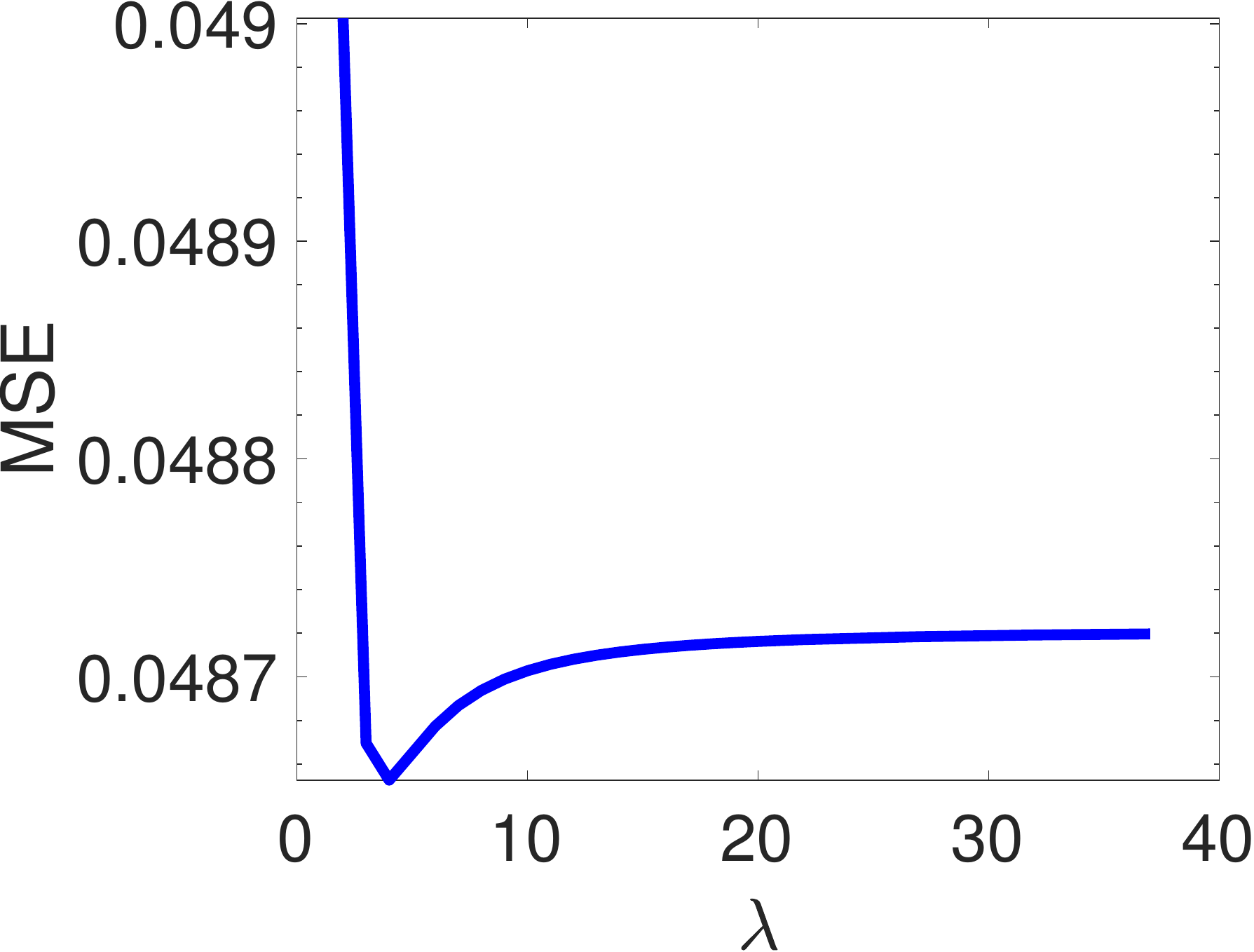}} 
\caption{The $\lambda$ parameter and its effect on synchronization error.}
\label{fig:MSEofLambda}
\end{figure}

Following the above experiments, we suggest the following approach for choosing the $\lambda$ parameter. First, we observe that as the level of noise decreases, such is the effect of different $\lambda$ on the error, that is the sensitivity of our choice decreases too. On the other hand, the search interval of $\lambda$ decreases as noise increase, which facilitates the search for (close to) optimal $\lambda$. This interval is determined from above according to the rotation synchronization in use, and ideally is inversely proportional to (an estimation of) the SNR \eqref{eqn:SNR}. Such an estimation is extracted from statistics on the spectrum of the translations (e.g., by considering the median) and can be further sharpened by having some assumptions on the data, see e.g., \cite[Chapter 11]{jeruchim2006simulation}. From below, we find the maximal norm of the translations of the data and set the lower bound to be $\frac{2}{0.59}$ times this number. This factor would guarantee the two conditions on $\lambda$ from Proposition~\ref{prop:app_homo_contraction} and \eqref{eqn:lambda_cond}. We set the number of synchronization evaluations we are willing to apply, and choose $\lambda$ that minimizes a predefined cost function, such as \eqref{eqn:Cartan_Optim}. In our examples, we follow such a procedure which is also available in the online code for the case of $\se(d)$.

\subsubsection{Optimizing the global alignment in the inner rotations synchronization} \label{subsec:global_alignment}

Proposition~\ref{prop:invarint_back_mapping} characterizes the conditions on $Q\in \Gb$ such that
\[  \Psi_\lambda^{-1} \left( Q_i Q \right) \Psi_\lambda^{-1} \left( (Q_j Q)^{-1} \right) = \Psi_\lambda^{-1} \left( Q_i \right) \Psi_\lambda^{-1} \left( Q_j \right)^{-1}  .\]
This means that a global alignment $Q$ on $\Gb$ also serves as a global alignment in $\Gc$, after the back mapping $\Psi_\lambda^{-1} $. The conditions on $Q$ means that it does not break the Cartan decomposition of $Q_i$. Namely, if $Q_i = \exp(v_i)k_i$ and $Q = \exp(v)k$ are the global Cartan decompositions, their product, under the conditions of Proposition~\ref{prop:invarint_back_mapping}, has the form $\exp(v_i+\ad{k_i}{v}) k_i k$, which is already in its Cartan form since $k_i k \in K$ and $v_i+\ad{k_i}{v} \in \mathfrc{p}$. Therefore, since multiplying from the right by any element from $K$ is invariant for the back mapping, it is clear that for an arbitrary global alignment $Q = \exp(v)k \in \Gb$, the problem is reduced to finding an optimal $v \in \mathfrc{p}$ that minimizes some error after applying $\Psi_\lambda^{-1}$. In other words, we look for a global alignment of the form  $Q = \exp(v)$.

In practice, a fast Cartan decomposition as described in Section~\ref{apx:CartanCalculation} of the supplementary, opens the door for searching a good global alignment. Specifically, using \eqref{eqn:prod_QQi} we have
\[ \Psi_\lambda^{-1} \left( Q_i Q \right) = \Psi_\lambda^{-1} \left( Q_i \exp(v) \right) = \Psi_\lambda^{-1} \left( \exp(v_i)\exp(\ad{k_i}{v}) k_i\ \right)  . \]
Denote the Cartan decomposition of the exponential product by
\[ \exp(v_i)\exp(\ad{k_i}{v})  = \exp(\widetilde{v}_i) \widetilde{k}_i , \quad \widetilde{v}_i \in \mathfrc{p} , \quad \widetilde{k}_i \in K . \]
This decomposition can be efficiently calculated. Similarly, we have 
\[ (Q_j Q)^{-1} = k_j^T \widetilde{k}_j^T \exp(-\widetilde{v}_j) =  \exp(\ad{k_j^T \widetilde{k}_j^T}{-\widetilde{v}_j}) k_j^T \widetilde{k}_j^T .\]
Then, the product of the back mapping is also efficiently computable, and is a function of $v$,
\[ \begin{aligned}[t]
p_{ij}(v)  =  \Psi_\lambda^{-1} \left( Q_i Q \right) \left( \Psi_\lambda^{-1} \left( Q_i Q \right) \right)^{-1} & = \Psi_\lambda^{-1} \left( Q_i Q \right)  \Psi_\lambda^{-1} \left( (Q_i Q)^{-1} \right) \\
  &= \left( \widetilde{k}_i k_i,  \lambda \widetilde{v}_i \right)   \left( k_j^T \widetilde{k}_j^T ,  \lambda \ad{k_j^T \widetilde{k}_j^T}{-\widetilde{v}_j}\right) \\
    &= \left( \widetilde{k}_i k_i k_j^T \widetilde{k}_j^T , \lambda ( \widetilde{v}_i  - \ad{\widetilde{k}_i k_i k_j^T \widetilde{k}_j^T}{\widetilde{v}_j} ) \right) \in \Gc .  
\end{aligned}  \]
Having the above product, we want to optimize
\[ \min_{v \in \mathfrc{p}} \sum_{(i,j) \in \mathcal{E}} w_{ij} d\left(p_{ij}(v), g_{ij} \right) , \] 
where $d(\cdot,\cdot)$ is an appropriate metric on $\Gc$ and $g_{ij}$ are measurements of synchronization.

To keep the implementation of synchronization efficient, we limit this procedure to a fixed, small number of iterations. For cases of large amount of measurement, a further time improvement can be done by adopting a random sampling strategy (``bootstrapping") to approximate the cost function by averaging the results of several partial sum calculations and avoid a large number of computations of Cartan decomposition needed for each iteration. Implementation of such a process is embedded in the code online.

\subsection{Synthetic Data over Special Euclidean Group}  \label{subsec:synthData}

We study the performances of four main methods for synchronization over $\se(d)$; first is our method of synchronization via contraction. Second is the spectral method as done in \cite{arrigoni2015spectral, bernard2015solution}, however, we further accommodate it by allowing a varying scale on the translations as we do in the contraction method. The third is a separation-based method implemented similarly to the approach in \cite{cucuringu2012sensor} (solving first the rotational parts of synchronization and then use the result for estimating the translational parts). To make the comparison more realistic, we strengthen this separation-based algorithm by allowing it to solve the rotational part in the most suitable way, exactly as we do for our method after applying the mapping to rotations. We elaborate more on the choice of rotations synchronization when describing the specific examples. The fourth method is an intrinsic solver of least squares as defined in~\eqref{eqn:Cartan_Optim}. We initiate the least squares algorithm (as it is a local solver) with the solution of our synchronization method to make the scenario more practical. A demonstration of different choices of initial guess and a numerical connection between the least squares cost function and the synchronization error are given in the complementary online code. One common feature to all of those methods is their availability for data with any given dimensionality $d$. For more details on the abovementioned approaches see Subsection~\ref{subsec:available_solutions_se_group}.

In the sequel, we generate synthetic random data as our ground truth elements, to be estimated by the synchronization algorithms. Every element is generated by projecting to $\so(d) $ a $d \times d$ matrix with entries chosen uniformly over $[0,1]$, and a translation part with entries uniformly picked from $[0,2]$. Having the ground truth data, we generate the relative relations $g_ig_j^{-1}$ to serve as data for the synchronization problem. The three obstacles that we artificially create are missing data, noise, and outliers in data. In the different scenarios, we mix these conditions in several ways to imitate real-world conditions.

In the first set of examples, we examine the effect of missing data, noise, and their combination on the performance of the synchronization algorithms. A
common choice for the noise is to assume it is Gaussian. However, there is more than one way to define such a distribution over a given Lie group. One approach is to define the PDF using the Euclidean multivariate Gaussian PDF on the Lie algebra (in terms of manifolds, on the tangent plane of the identity). This is done by clipping the tail of the Gaussian function or by folding the tails, also known as ``wrapped Gaussian", for more details see e.g., \cite[Chapter 2]{chirikjian2011stochastic}. This distribution, which is closely related to the von Mises distribution on the circle, facilitates the implementation of normal noise and therefore we adopt it in our experiments. To evaluate the noise we use Signal-to-Noise Ratio (SNR), which is a standard measure in signal processing. Given a data sample, the SNR is defined as the ratio between deviations in data and deviations in noise. Also, it is usually measured in logarithmic scale that is decibels (dB) units. Since we use Gaussian noise over the Lie algebra, we measure the noise level there too. To be specific, recall our noisy measurements $g_{ij}$ are of the form\eqref{eqn:noisemodel}, we use
\begin{equation} \label{eqn:SNR}
 \text{SNR}\left( \{ g_{ij} \}_{(i,j)\in \mathcal{E}} \right) = \frac{20}{|\mathcal{E}|} \sum_{(i,j)\in \mathcal{E}} \log_{10} \left( \norm{\log(g_ig_j^{-1})}/\norm{\log(N_{ij})}  \right) , 
\end{equation}
where $|\mathcal{E}|$ stands for the number of edges in $\mathcal{E}$. Measuring norms in the Lie algebra is also justified as it stands for the geodesic distance from the identity.

The other difficulty that we artificially create in our database is formed by making some of the measurements non-available. Potentially, for a set of $n$ unknown group elements, one has $\binom{n}{2}$ different meaningful measurements $g_{ij}$. However, it is often the case in applications that not all measurements are available. In the first set of examples, we consider cases with only a fraction of $p \binom{n}{2}$, $p \in [0.05,0.3]$ available measurements. 

Figure~\ref{fig:MissingDataWnoise} presents the first abovementioned set of example of synchronization over a set of measurements with missing data and noise. The ground truth are elements over the group $\se(d)$ of rigid motions over $\mathbb{R}^5$) with $d=3$ and $d=5$, in two sets of lengths $n=100$ and $n=200$. For the rotation synchronization in both the contraction and separation algorithms, we used the spectral method (see Algorithm~\ref{alg:eigenvector_method}). To demonstrate the flexibility in the contraction method, we also show in the first two examples the gain from choosing the rotation synchronization for the contraction method by combining it with the MLE solver. IN addition, the contraction based method uses automatic $\lambda$ picking, that typically outputs values in $50-120$ for those examples. Note that we also use $\lambda$ to scale the translational parts for the spectral method as well, to factor out the scaling from the performance comparison. Figure~\ref{subfig:miss1} depicts the synchronization error~\eqref{eq:PerfMeasure} as a function of the fraction of available data, contaminated with wrapped Gaussian noise of about $\approx 12 dB$. In this first example, we use the data set of $n=100$ group elements. We see that the performance of the least squares and separation methods is almost the same, and worst than the spectral and contraction methods. The contraction based methods show slightly superior error rates, especially as the amount of available data drops. Next, we zoom in on a fixed rate of $10\%$ of available data and increasing the noise level (recall that lower SNR means higher noise level). The error rates of this scenario are shown in Figure~\ref{subfig:miss2}, where we can clearly see the difference between the methods of separation and least squares (which give higher error rates) and the two other approaches.  Indeed, as the scenario becomes more challenging, i.e. more noisy data, the contraction methods perform better. We repeated the last example, and increase even more the noise level with a data set of $n=200$ group elements in the higher dimension group $\se(5)$. Figure~\ref{subfig:miss4} provides the error rate for this case, where more information is obtained with the same percentage of available data compared to the case of $n=100$ (recall that the amount of measurements grows quadratically with $n$). Once again, upon further pushing the SNR to about $\approx 8 dB$, the advantage of using contraction-based method is pointed out.
\begin{figure}
\centering    
\subfloat[Varying level of missing data with a fixed level of added noise]{	\label{subfig:miss1}    \includegraphics[width=0.3\textwidth]{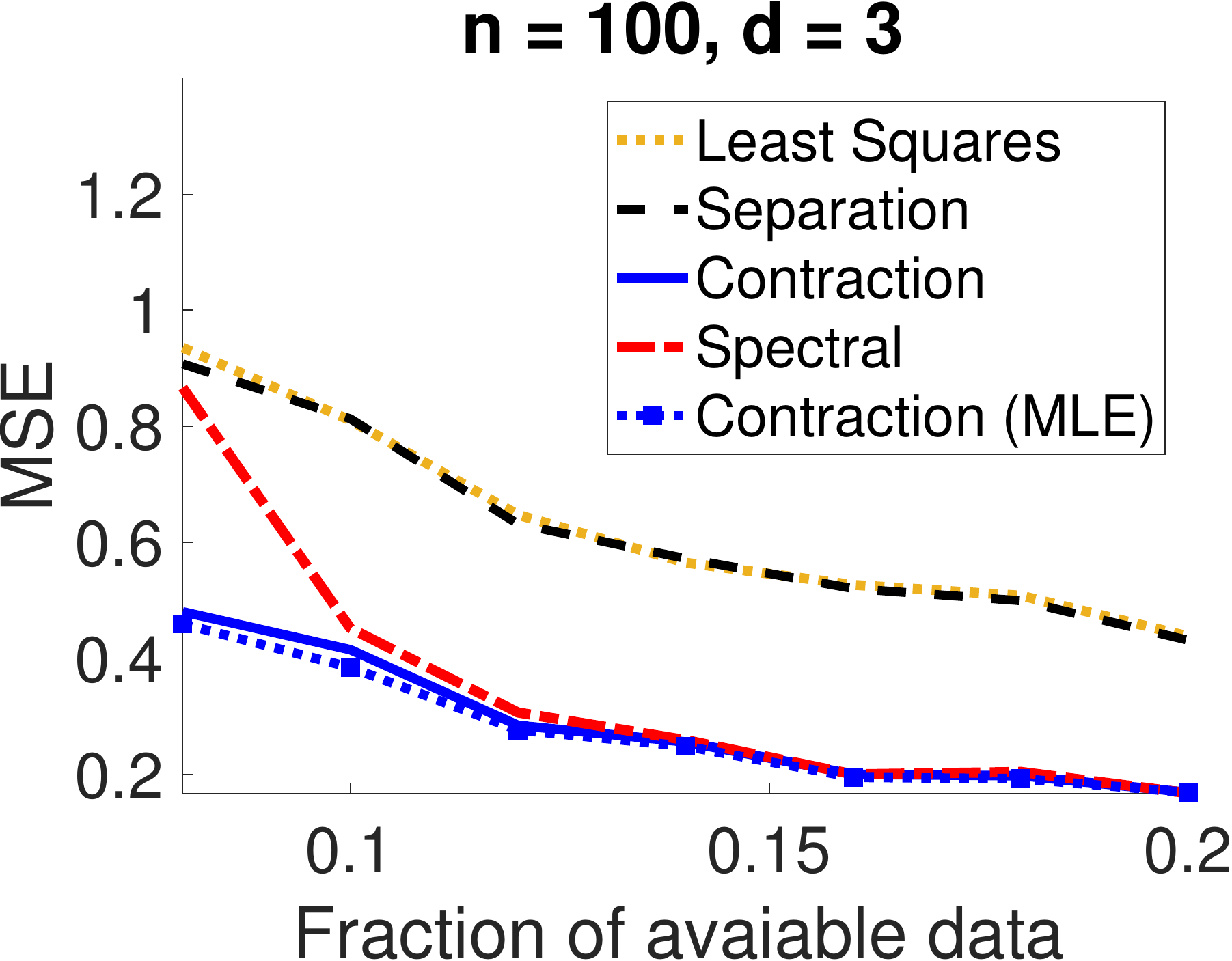}} \quad
\subfloat[A fixed level of missing data ($10\%$) and varying noise]{	\label{subfig:miss2}    \includegraphics[width=0.3\textwidth]{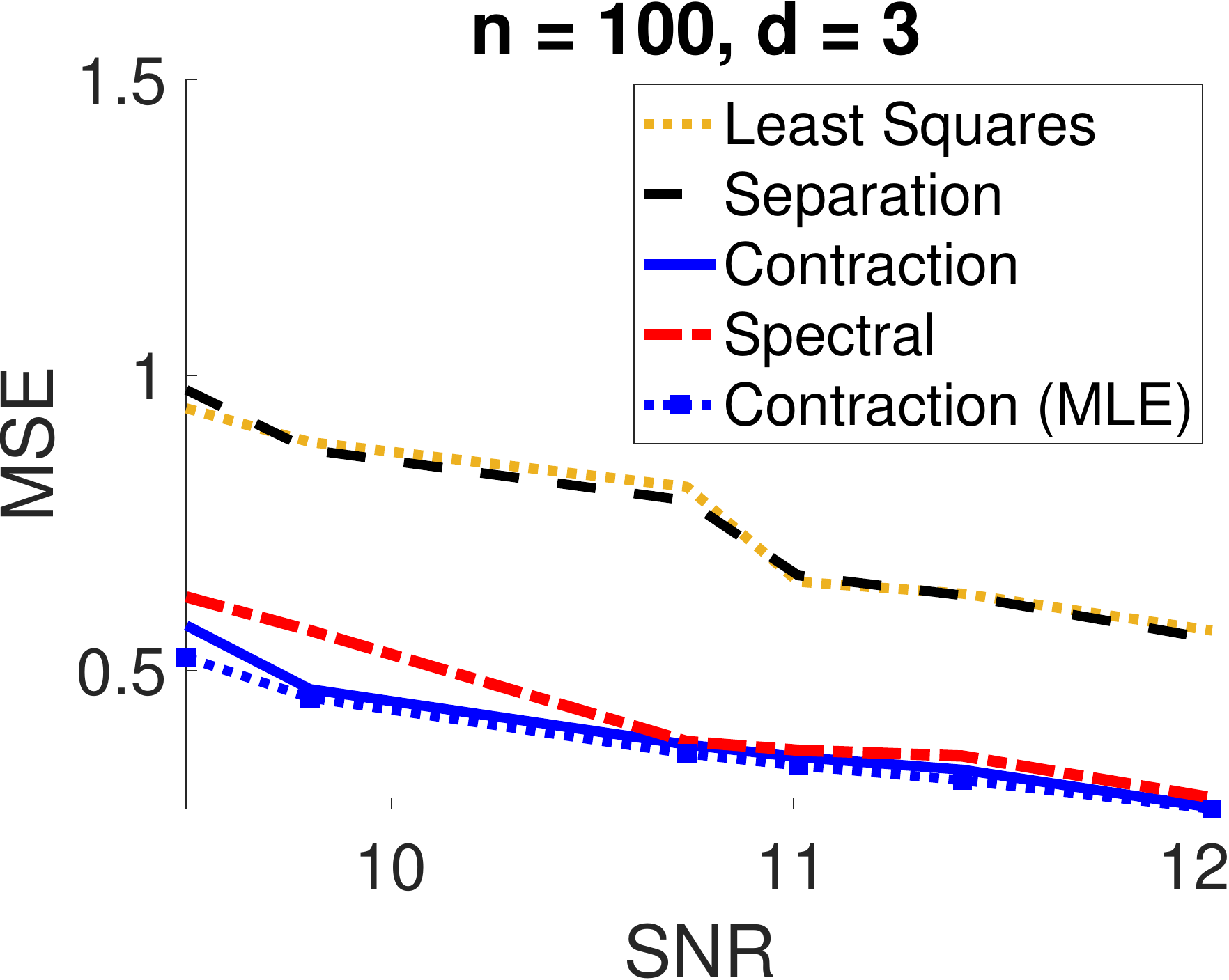}} \quad
\subfloat[A fixed level of missing data ($10\%$) and varying noise in higher dimension]{	 \label{subfig:miss4}   \includegraphics[width=0.3\textwidth]{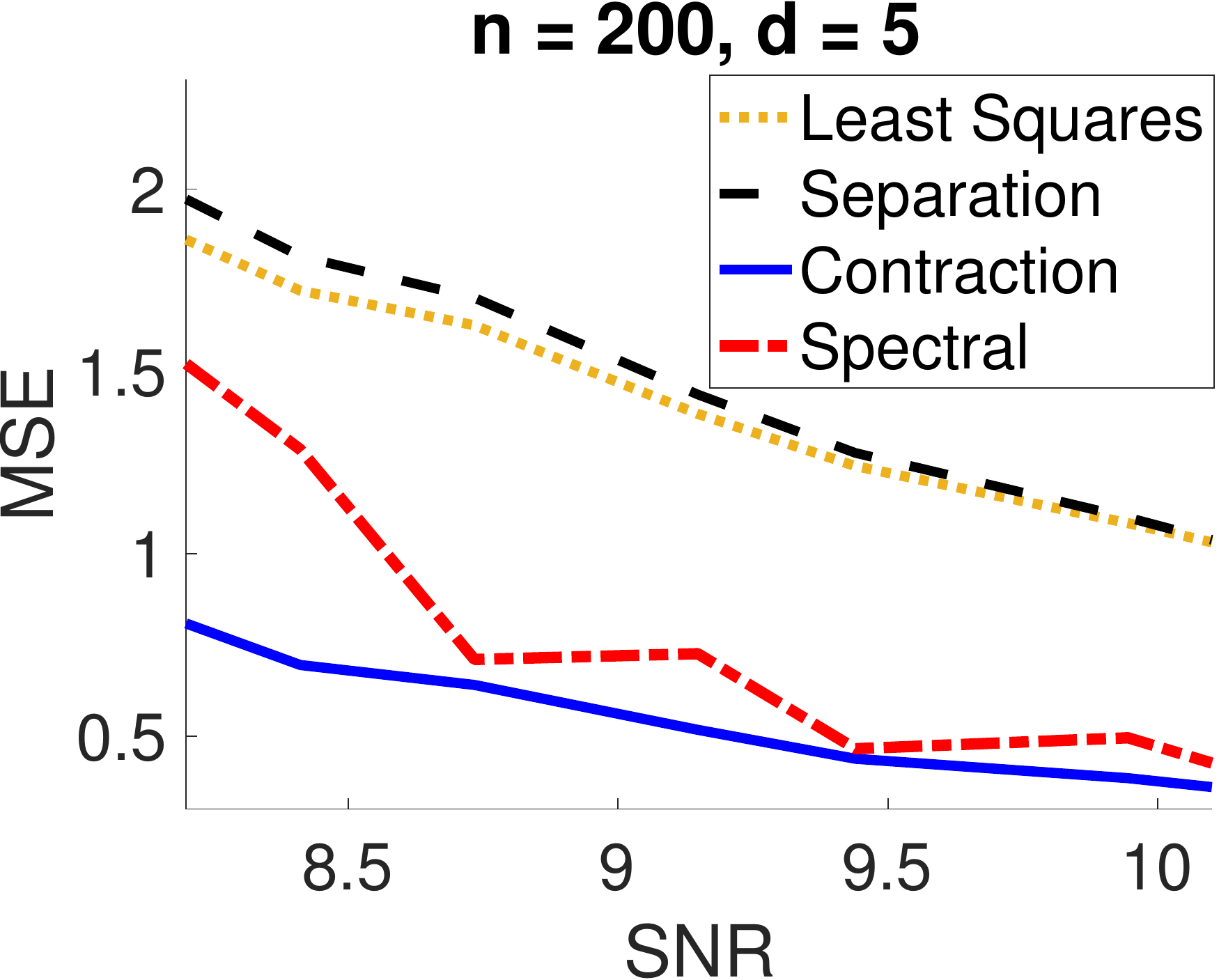}}   
\caption{Comparison in cases of missing data and with and without the presence of noise. If not stated otherwise, the rotation synchronization is done using spectral method (EIG).}
\label{fig:MissingDataWnoise}
\end{figure}

The second set of examples deals with the case of outliers in data. Each outlier is a randomly chosen $\se(d)$ element, that replaces a certain measurement $g_{ij}$. We generate the outlier to be distinguished from the other measurements. Specifically, the rotational part is a projection of a square matrix of order $d$ with standard normal distributed entries to $\so(d)$, and a $d$ size vector of uniform entries over $[0,1]$ for the translational part. These examples are done for $d=2$, that is rigid motions in $\mathbb{R}^2$. In order to efficiently address cases of outliers, we use for rotations synchronization the LUD algorithm \cite{wang2013exact}, minimizing $\ell_1$ cost function. This is true for the contraction method as well as for the separation-based method. In addition, for separation method we also applied $\ell_1$ minimization for the translation part too, when applicable, that is for $n=100$. The contraction method uses $\lambda = 220$ for this set of examples. The results are given in Figure~\ref{fig:OutliersSD}, where we examine the error rates as a function of outliers, where the $x$-axis in each plot is the non-outliers rate, that is the fraction of available genuine data. The range of outliers is $10\%-60\%$, meaning $90\%$ down to $40\%$ of good measurements. 

In the first examples, we use $n=100$ group elements and a model of only outliers. The results are presented in Figure~\ref{subfig:outl1}, where we observe significant advantage to the methods of contraction and separation. Nevertheless, for values of around $40\%$ of outliers and more, the contraction method is superior. With the same $n$, we change the model and add to the measurements a fixed level of Gaussian noise, resulting in about $\approx 12 dB$ of SNR. In this case, all methods gain more error, where the contraction method still shows very accurate estimations till approaching $50\%$ of outliers, as seen in Figure~\ref{subfig:outl2}. We repeated those examples, now with $n=300$ group elements. Since $\ell_1$ minimization for the translation part of the separation method became computationally intractable (the constraints are in a matrix of size $n^2 \times n^2$), we use a standard least squares solver there. In the presence of noise, this modification turns to have little to no effect on the overall performance of the separation. The results in the ``only outliers" scenario are given in Figure~\ref{subfig:outl3}, where the effect of using LUD is significant and provides excellent results for the contraction method. Then, we add noise to the measurements, of about $\approx 9 dB$. The result, given in Figure~\ref{subfig:outl4} show better performance of contraction for any tested value of outliers rate and demonstrate the advantage of using the contraction for this mixed model of noise.

\begin{figure}
\centering    
\subfloat[Varying level of outliers in data]{	 \label{subfig:outl1}   \includegraphics[width=0.3\textwidth]{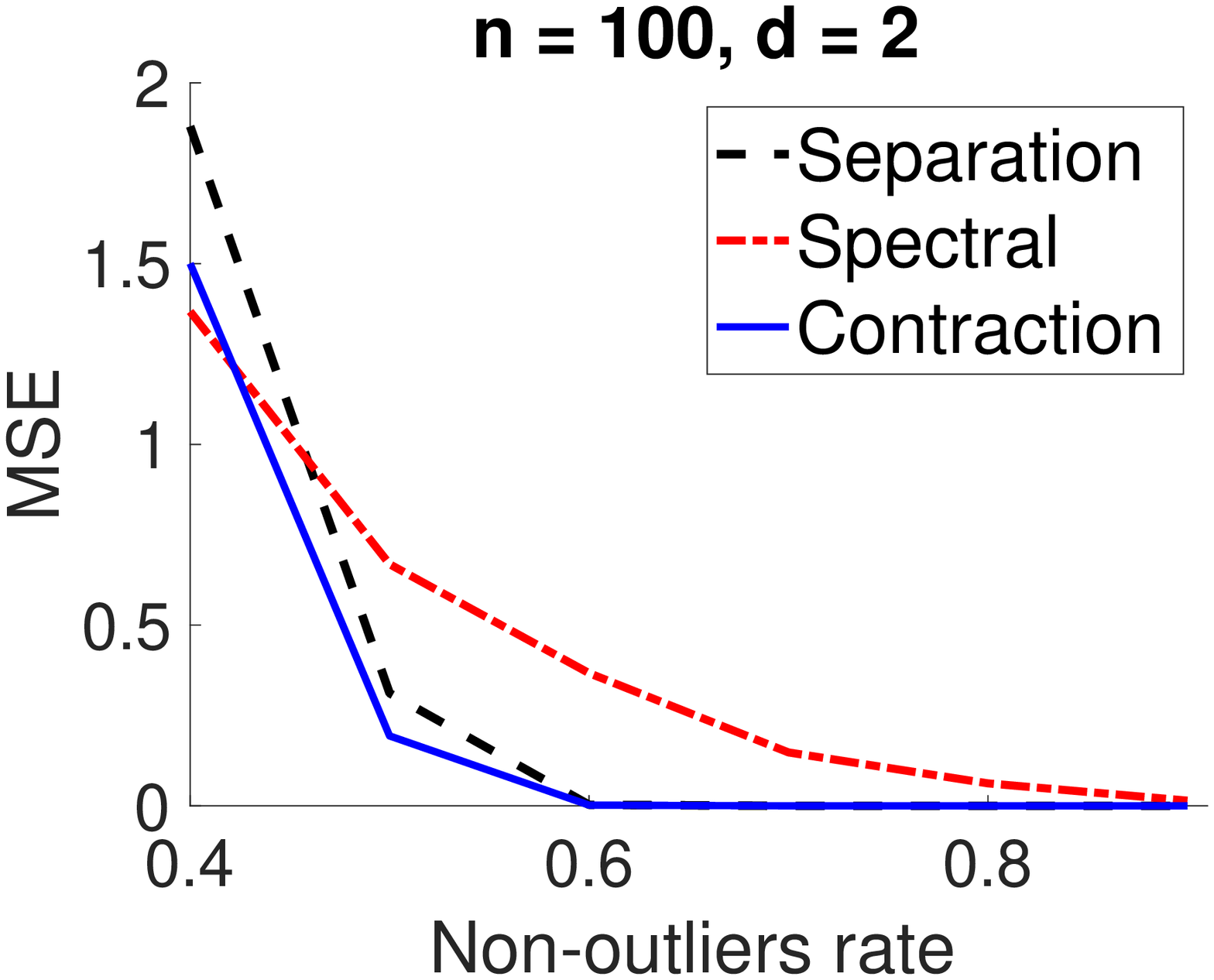}} \qquad
\subfloat[Varying level of outliers in data with a fixed level added noise]{	\label{subfig:outl2}    \includegraphics[width=0.3\textwidth]{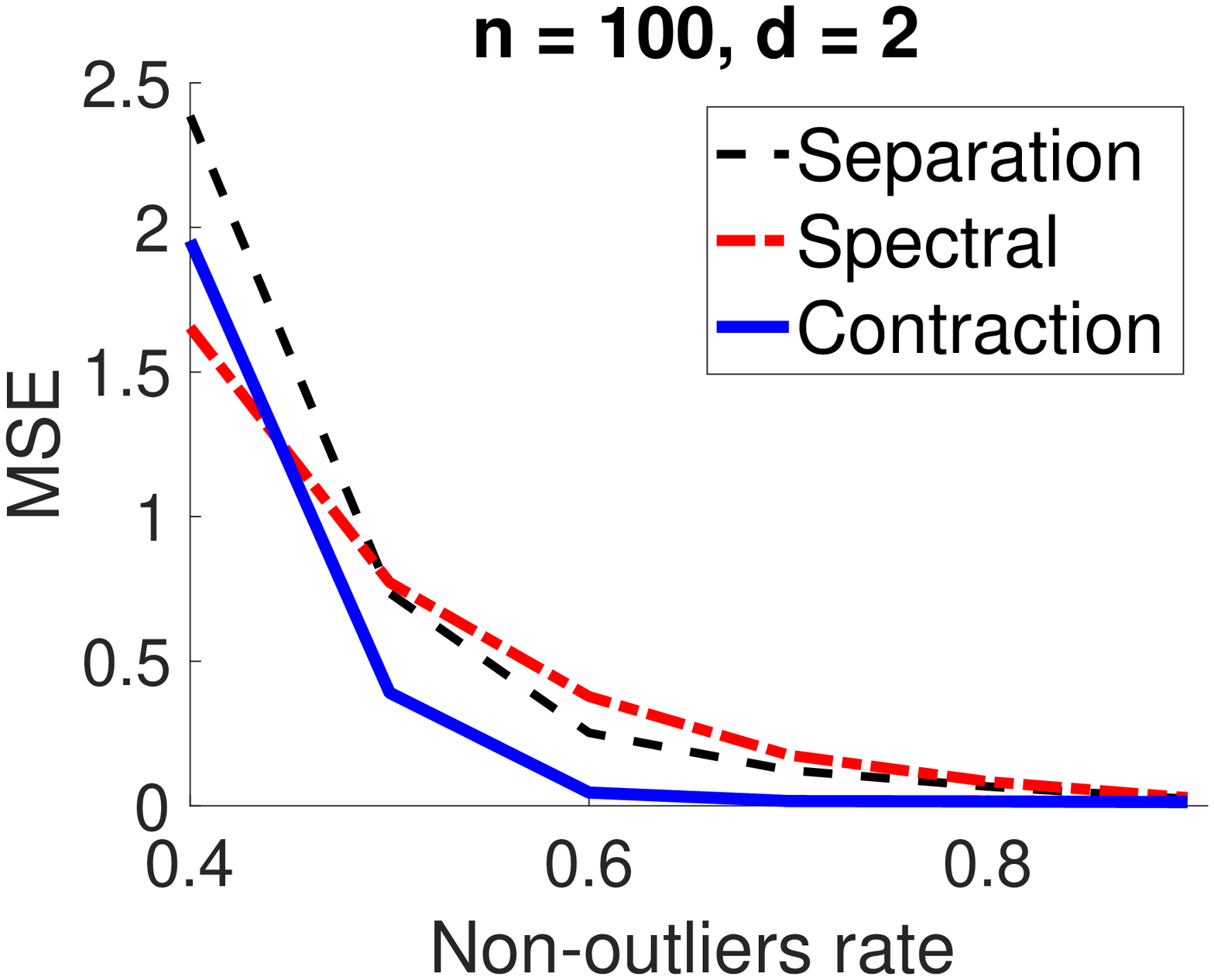}}  \\
\subfloat[Varying level of outliers in data]{	  \label{subfig:outl3}  \includegraphics[width=0.3\textwidth]{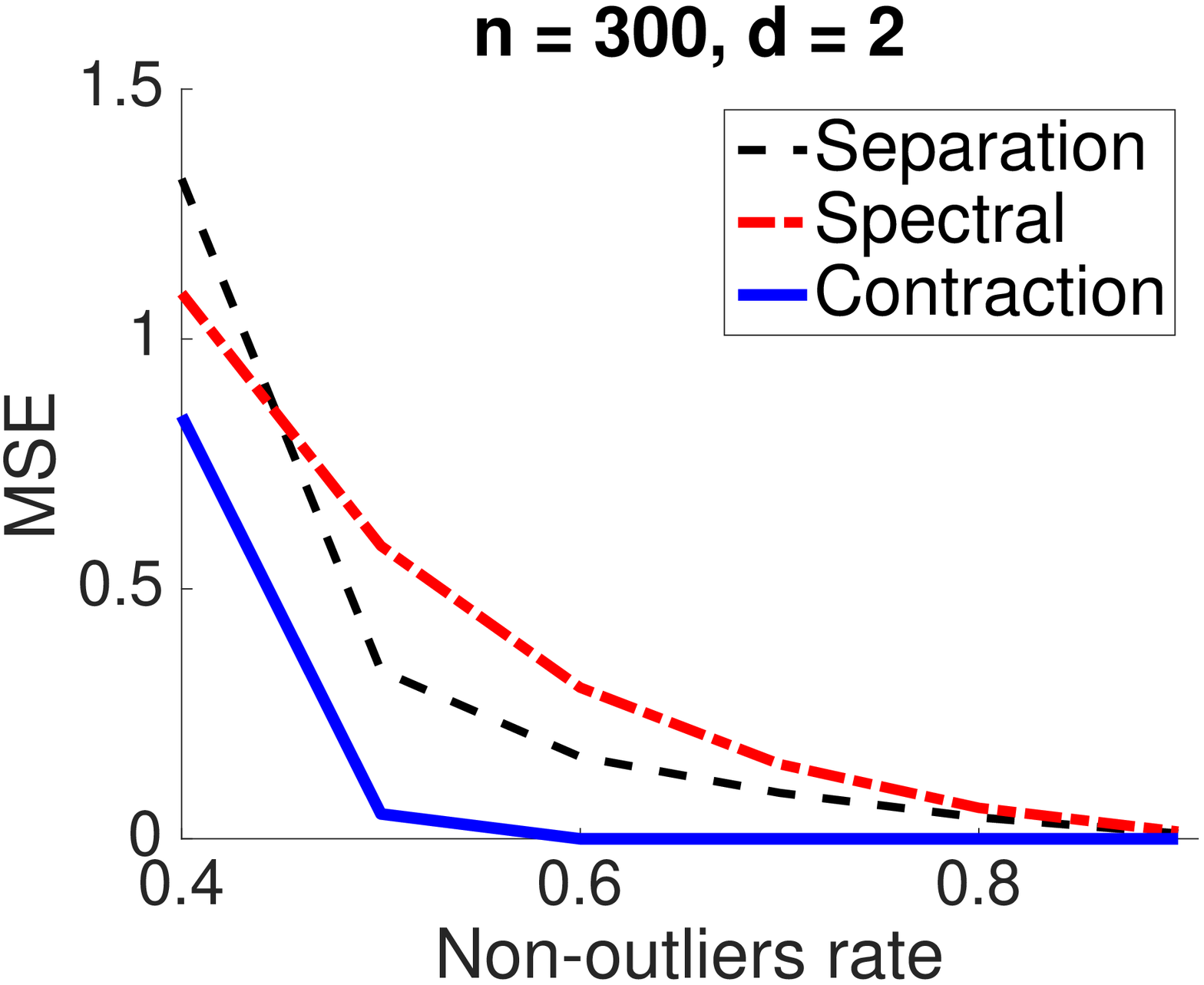}}  \qquad
\subfloat[Varying level of outliers in data with a fixed level added noise]{	\label{subfig:outl4}    \includegraphics[width=0.3\textwidth]{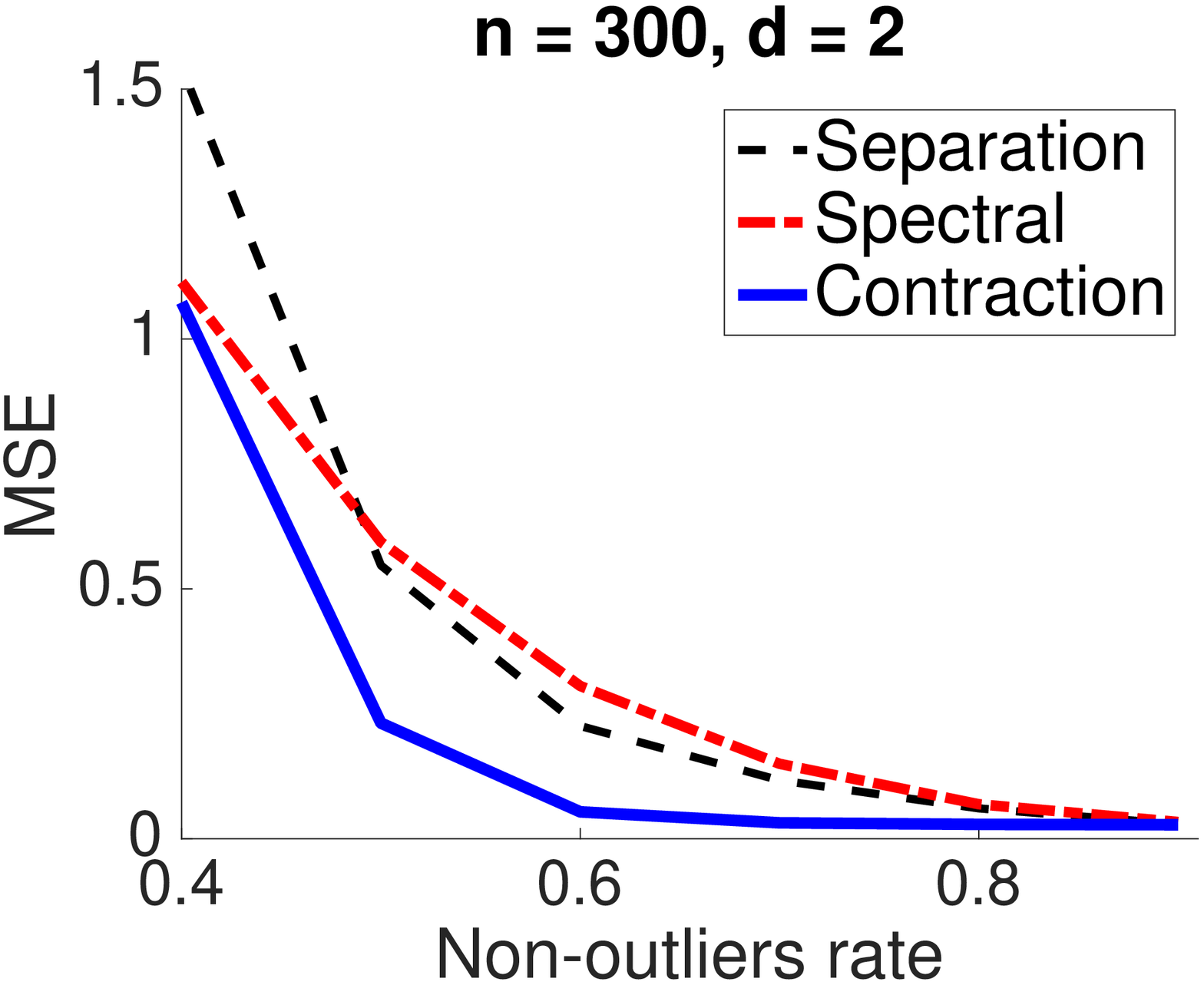}}   
\caption{Comparison in case of outliers. }
\label{fig:OutliersSD}
\end{figure}


\subsection{Synchronization over Matrix Motion Group} \label{subsec:MMG}

The group of matrix motion, 
\[ \mmg(d, \ell) = \left( \on(d) \times \on(\ell) \right) \ltimes \m(d,\ell), \] 
is briefly mentioned in Example~\ref{exm:MMG}, where we present how the Cartan decomposition of $\on(d + \ell)$ leads to the construction of this Cartan motion group. In this group, the ``motion part" is a matrix of size $d \times \ell$, comparing to the translation vector of length $d$ in $\se(d)$. The ``compact part" of $\mmg(d, \ell)$ is also more general than in $\se(d)$ as it consists of products of orthogonal matrices, which in addition does not necessarily have positive determinants. In this point, the difference becomes even more significant, as the adjoint representation defines the action of the group \eqref{eqn:Cartan_action} to be
\[ \left(\mu_1, \eta_1, B_1 \right)\left(\mu_2, \eta_2, B_2 \right) = \left(\mu_1\mu_2, \eta_1\eta_2, \mu_1B_2 \eta_1^T + B_1 \right) , \]
which makes it harder to write the group elements in terms of matrices. Note that the adjoint $(\mu,\eta) \mapsto \mu B \eta^{T}$ of $\on(d) \times \on(\ell)$ on $\m(d,\ell)$ is followed directly by $\left[ \begin{smallmatrix} \mu & 0 \\ 0 & \eta \end{smallmatrix} \right] \left[ \begin{smallmatrix} 0 & B \\ B^T & 0 \end{smallmatrix} \right]\left[ \begin{smallmatrix} \mu & 0 \\ 0 & \eta \end{smallmatrix} \right]^{-1} $. The inverse is then 
\[ \left(\mu, \eta, B \right)^{-1} = \left(\mu^T, \eta^T, -\mu^T B \eta \right)  .\]

The contraction map $\Psi_\lambda \colon \mmg(d, \ell) \mapsto \on(d+\ell)$  takes the form of
 \[ \Psi_\lambda \left(\mu, \eta, B \right) = \exp \left( \left[ \begin{smallmatrix} 0 & B_\lambda \\ -B_\lambda^T & 0 \end{smallmatrix} \right]  \right) \left[ \begin{smallmatrix} \mu & 0 \\ 0 & \eta \end{smallmatrix} \right] , \quad B_\lambda = \frac{1}{\lambda} B .  \]
The back mapping $\Psi_\lambda^{-1}$ is locally unique when $ \exp \left( \left[ \begin{smallmatrix} 0 & B_\lambda \\ -B_\lambda^T & 0 \end{smallmatrix} \right]  \right)$ lies near the identity, that is for $B_\lambda$ small enough. In practice, we calculate $\Psi_\lambda^{-1}$ using a gradient based optimization, as described in Section~\ref{apx:CartanCalculation}, and based on this uniqueness for large enough $\lambda$.

To measure the synchronization error we also use \eqref{eq:PerfMeasure}, where the global alignment $g \in \mmg(d, \ell)$ minimizes the separated loss function
\[ \sum_{i=1}^n \norm{ \hat{g}_i g - g_i }_F^2    =  \sum_{i=1}^n \norm{ \hat{\mu}_i\mu - \mu_i }_F^2 + \norm{ \hat{\eta}_i\eta - \eta_i }_F^2 + \norm{ \hat{\mu}_i B  \hat{\eta}_i^T + \hat{B}_i - B_i }_F^2 . \]
The last term which involves $B$, is solved in a least squares fashion, where the first two summands are calculated using the SVD decomposition, similar to the case of \eqref{eq:PerfMeasure}.

As a benchmark for our method, we implement a separation-based method for $\mmg(d, \ell) $. In this method, we first solve \eqref{eq:NonCompactOptimPrb1} separately for the two orthogonal parts, by applying rotation synchronization. Then, for the ``translational" parts $\{B_i\}_{i=1}^n$, we use a least square solution defined as follows. Denote each unknown group element by $g_i = \left(\mu_i, \eta_i , B_i \right)$ and by $g_{ij} = \left(\mu_{ij}, \eta_{ij} , B_{ij} \right)$  an element of the data. Then, the ``translational" parts are the minimizers of
\[  \sum_{i,j} w_{ij} \norm{B_{ij} \eta_j +\mu_i \mu_j^T B_j + B_i}_F^2 . \]
Other methods, such as the spectral method, heavily depend on the matrix representation one chooses for the group, and are thus omitted at this point from the numerical demonstration. For simplicity, both separation and contraction methods use for rotation synchronization the spectral algorithm. We compare the performances of the two methods over three scenarios: added noise, missing data with a fixed noise level, and noisy data further contaminated with outliers. The results are depicted in Figure~\ref{fig:MMGresults}. As the ground truth, we randomly generated $n=60$ elements in $\mmg(4,3)$, and make synchronization data according to the different scenarios. The contraction map uses $\lambda=50$ for all scenarios. In Figure~\ref{subfig:noisy}, we use full graph data, where each measurement is calculated with a multiplicative noise. For each test, we use noise with different variance and so the x-axis represents a growing amount of noise as the variance grows. The difference between the separation and contraction methods becomes significant as the noise increases. Figure~\ref{subfig:missing} displays the results for synchronization data consists of only partial data. A similar behavior of the two methods is observed, with a fixed advantage to the results of the contraction method. In figure~\ref{subfig:outliers}, we add outliers instead of the missing data of the previous scenario (outliers were drawn randomly and separately on each component of the group element). The advantage of the contraction is yet again shown.

\begin{figure}
\centering    
\subfloat[Different noise levels]{	  \label{subfig:noisy}  \includegraphics[width=0.3\textwidth]{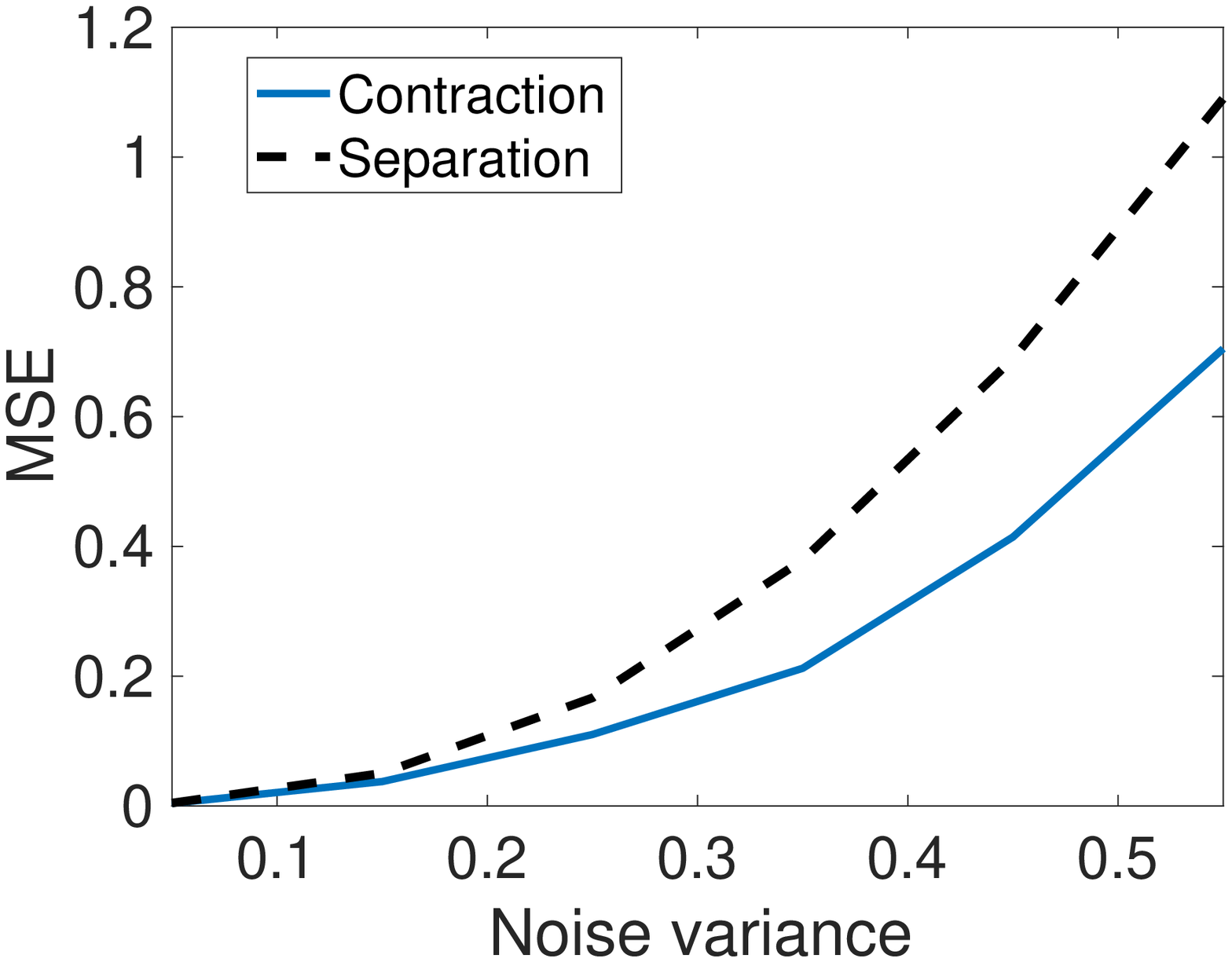}} \quad
\subfloat[Missing data with noise]{	\label{subfig:missing}    \includegraphics[width=0.3\textwidth]{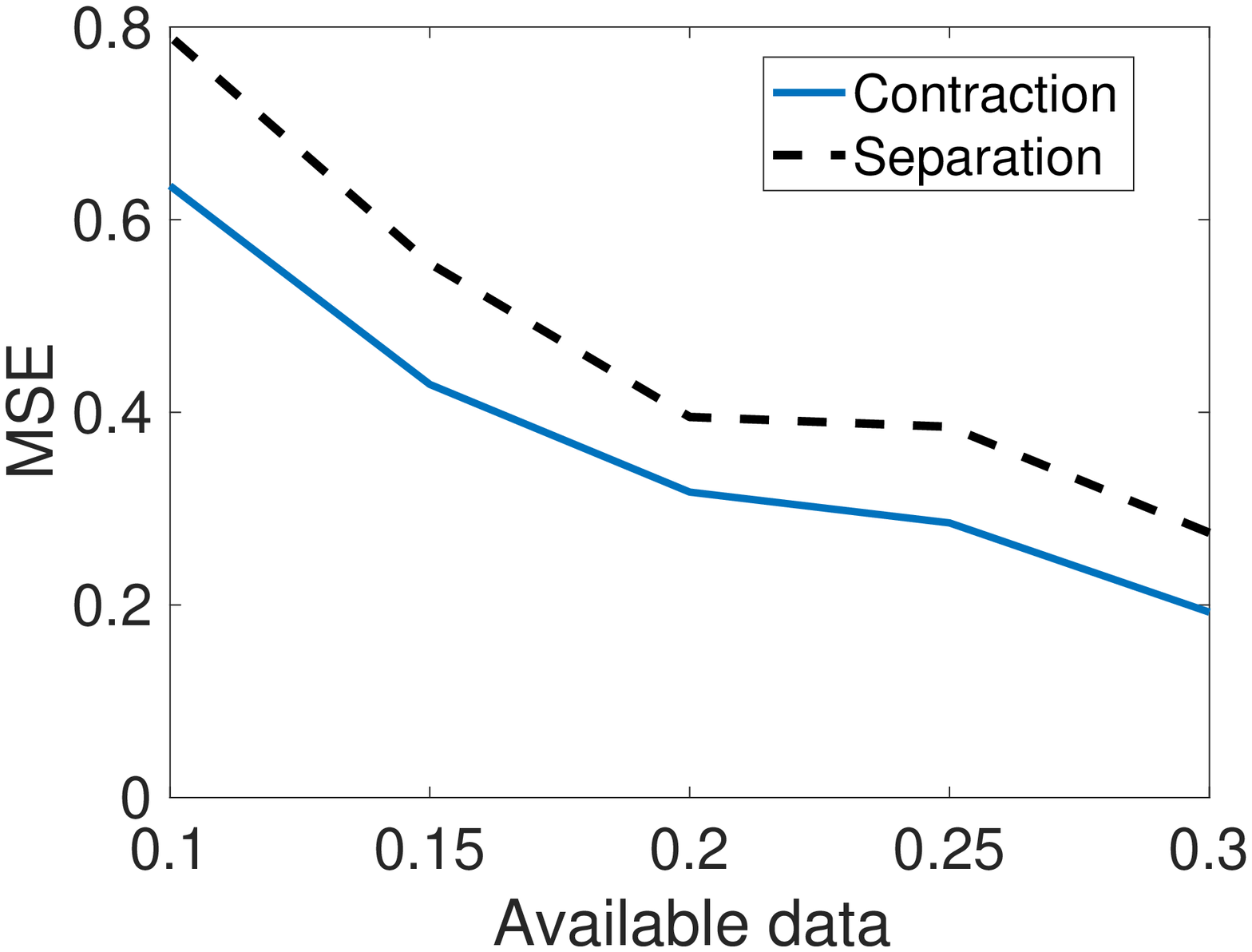}}  \quad
\subfloat[Outliers with noise]{	\label{subfig:outliers}    \includegraphics[width=0.3\textwidth]{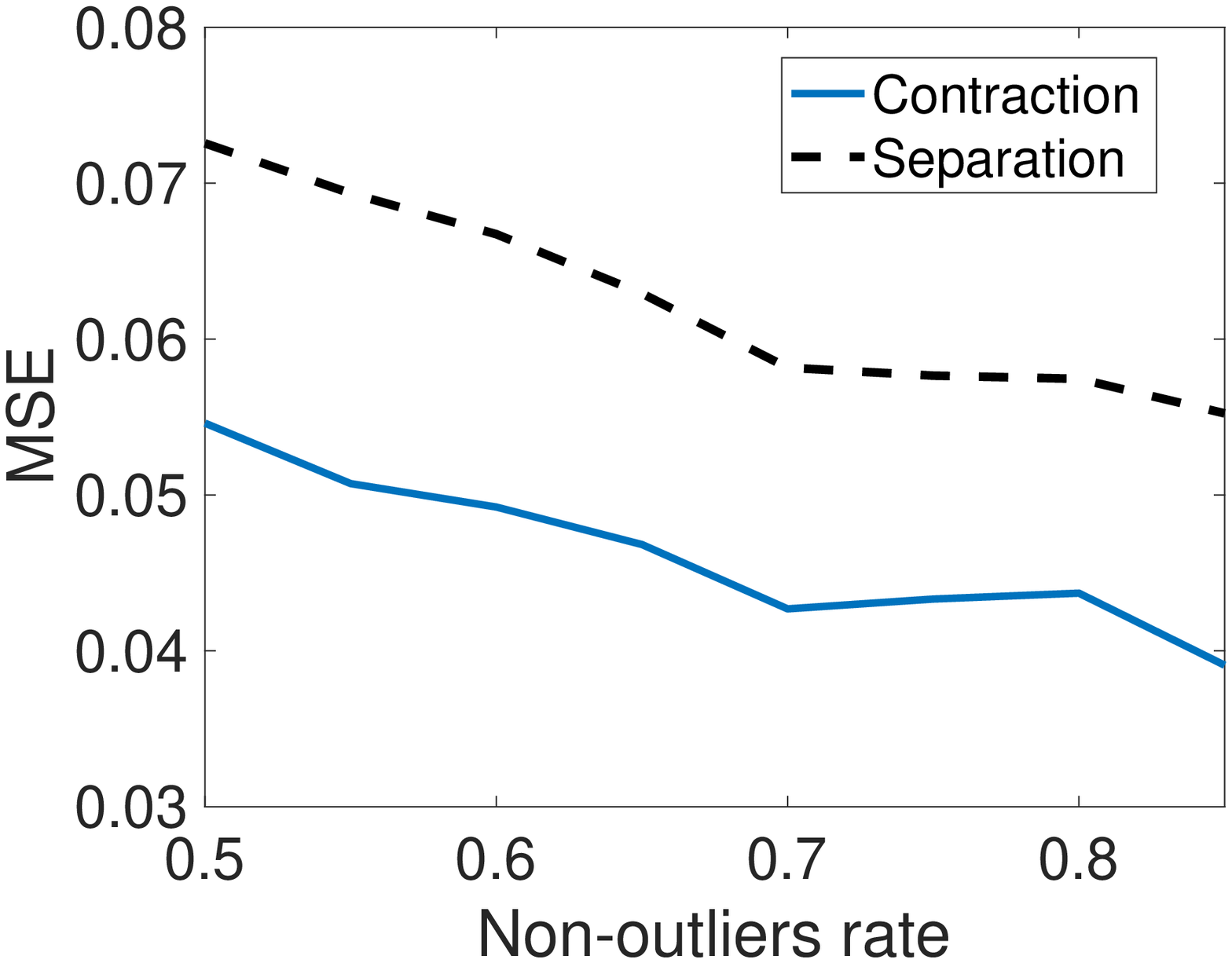}} 
\caption{Three comparisons of two synchronization methods over $\mmg(4, 3)$ with $n=60$, separation-based versus contraction method. The comparisons, from left to right: varying noise level (x-axis corresponds to noise variance), a fixed level of noise with different percentage of available data, and a varying amount of non-outliers contaminated with a fixed level of noise.}
\label{fig:MMGresults}
\end{figure}

\subsection{Examples of real data --- 3D point cloud registration}

We further demonstrate the performance of our synchronization method for real data. The problem we address is multiple point-set registration in $\mathbb{R}^3$. In this problem we are given point clouds that represent raw scans of a three dimensional object. We aim to recover the transformations required to bring each range scan into a single coordinate system, in order to form an estimation to the original object. In our examples we have the original model object so we can calculate the ground truth transformations and compare them to the results we get from the different algorithms being tested. 

The recovery process of the 3D object begins with estimating the relative motions between pairs of scans, computed by the Iterative Closest Point Algorithm (ICP), see e.g., \cite{rusinkiewicz2001efficient}. In particular, for any two sets of point clouds we apply the ICP algorithm for several initial guesses, as done e.g., in \cite{govindu2014averaging, torsello2011multiview}. The ICP algorithm has two outputs, a rigid motion (an element from $\se(3)$) that transforms one point cloud to the other, and an estimated error of the mutual registration after applying the transform. We use the latter as an indicator for determining whether to include a given measurement or relative motion or not. This should be done carefully in order to eliminate both measurements of large deviations as well as outliers measurements. Since most of raw scans have small part of intersection, if any, just adding measurements would not improve the registration results but the opposite.

In general, there are two main approaches for solving the multiple point-set registration. First is to optimize a cost function that depends on bringing together corresponding points, e.g., \cite{pulli1999multiview}. The other approach, also known as frame space approach, is to solve the relations (rotations and translations) between mutual scans, e.g., \cite{sharp2004multiview}. We follow the second approach in order to highlight the synchronization algorithms and their performance for addressing the 3D registration problem. We compare the performance of six algorithms, including two variants of our contraction algorithm. Apart from the synchronization via contraction, the participanting algorithms in the examples are: the spectral method and separation-based method, as done in the previous subsection. Another method is the diffusion method in \cite{torsello2011multiview} that is based upon the relation of $\se(3)$ and dual quaternions, we termed this method QD in the tables \footnote{The code is courtesy of the authors of \cite{torsello2011multiview}, see \url{https://vision.in.tum.de/members/rodola/code}}. We also use the linear variant of contraction, done in the matrix level and based on polar decomposition, see Subsection~\ref{subsec:matrix_projection}. Both contraction methods as well as the separation methods use for rotations synchronization the maximum likelihood estimator (MLE) from \cite{boumal2013robust}. At last we also include synchronization via contraction with the spectral method (Algorithm~\ref{alg:eigenvector_method}).

For our first example we use the frog model taken from the shape Repository of the Visualisation Virtual Services \url{http://visionair.ge.imati.cnr.it/ontologies/shapes}, the model ID is ``269-frog-range images". This model consists of $24$ scans, each is a point cloud consists of between $24,000$ to $36,000$ points. The full model of the frog is presented in Figures~\ref{subfig:FrogView1}--\ref{subfig:FrogView2}, where the entire set of scans, each under different color, is depicted in Figure~\ref{subfig:FrogSamples}. We repeat the comparison for two different sets of measurements; the first one has $80$ pairwise measurements (from a total of potential $276$, about $29\%$) and the second has $163$ pairwise measurements. These measurements were chosen to minimize the errors according to the estimated error of each measurements, as generated by the ICP algorithm. This also means that adding more measurements increases the noise level. The results are given both numerically in Table~\ref{tab:resultsFrog} and visually in Figures~\ref{fig:FrogModelresults}--\ref{fig:FrogModelHighlights}. In Table~\ref{tab:resultsFrog} we notice that the three leading methods are the two contraction methods (both the group level contraction and the matrix level contraction) and the separation method. All three methods use MLE rotations synchronization, with $\lambda = 400$ for the contraction map and $\lambda=200$ for the projection map. The method based on group contraction shows a slightly better numerical error, for both sets of measurements, however the visual differences in shape reconstruction is minor. The reconstructions (with the exception of the one of QD which was too erroneous) are given in Figures~\ref{subfig:FrogCont} --\ref{subfig:FrogConteig}, where the colors correspond to different scans, as shown in Figure~\ref{subfig:FrogSamples}. The error of the synchronization via contraction, that is based on EIG with $\lambda=850$, shows larger errors but still leads to a good visual reconstruction. However, both the spectral method and the diffusion method (dual quaternions) yield higher errors which were also very significant visually and do not provide good reconstructions, see Figure~\ref{fig:FrogModelHighlights}. Last interesting result is that the additional measurements of the second set of data did not lead to better results, probably due to the extra noise it adds to the data. This is true for all methods except for the method based on group contraction combined with the spectral method for rotation synchronization which shows improvement in error.

\begin{figure}
\centering    
\subfloat[Original object first view]{	 \label{subfig:FrogView1}   \includegraphics[width=0.25\textwidth]{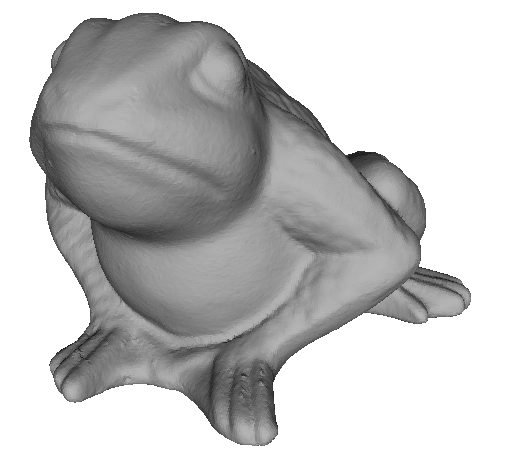}} \quad
\subfloat[Original object second view]{\label{subfig:FrogView2}   \includegraphics[width=0.25\textwidth]{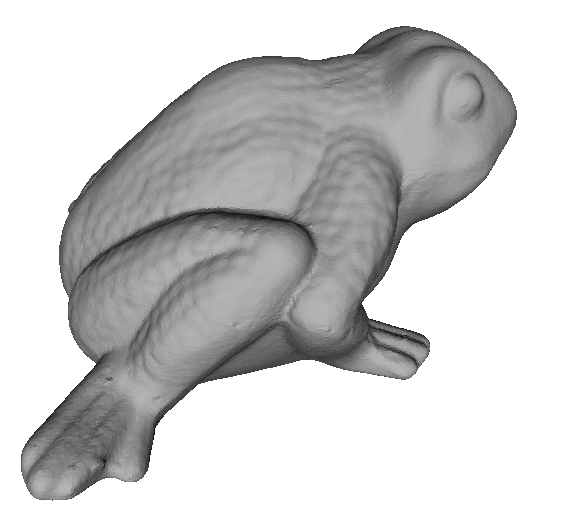}} \quad
\subfloat[Colored raw scans]{				 \label{subfig:FrogSamples}    \includegraphics[width=0.35\textwidth]{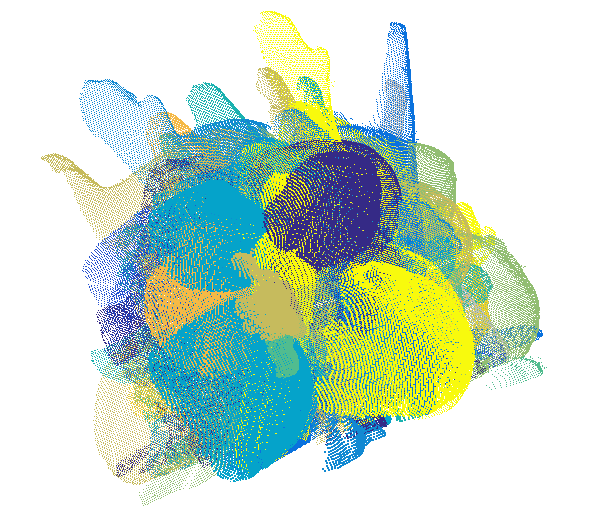}}   \\
\caption{The ``Frog" model: full model views and the raw scans colored.}
\label{fig:FrogModel}
\end{figure}
 
 \begin{figure}
\centering    
\captionsetup[subfloat]{labelformat=empty}
\subfloat[Contraction (MLE)]{   \label{subfig:FrogCont}   \includegraphics[width=0.18\textwidth]{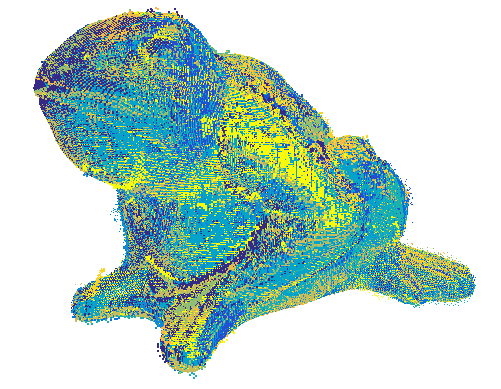}} 
\subfloat[Separation (MLE)]{		\label{subfig:FrogSept}   \includegraphics[width=0.18\textwidth]{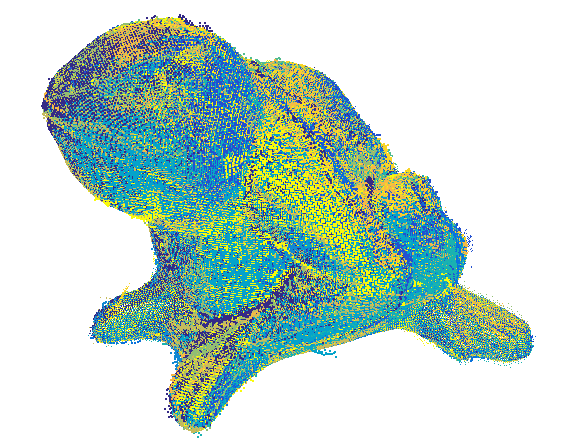}} 
\subfloat[PD (MLE)  			 ]{	    \label{subfig:FrogPD}    \includegraphics[width=0.18\textwidth]{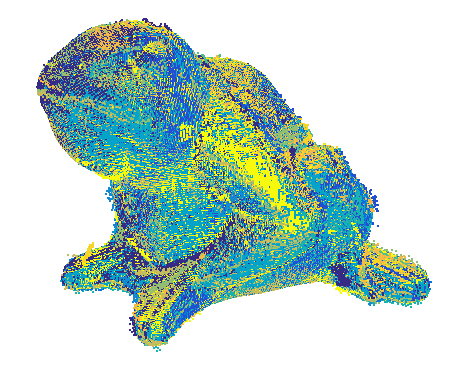}}   
\subfloat[Spectral  		    ]{	    \label{subfig:FrogSpec}   \includegraphics[width=0.18\textwidth]{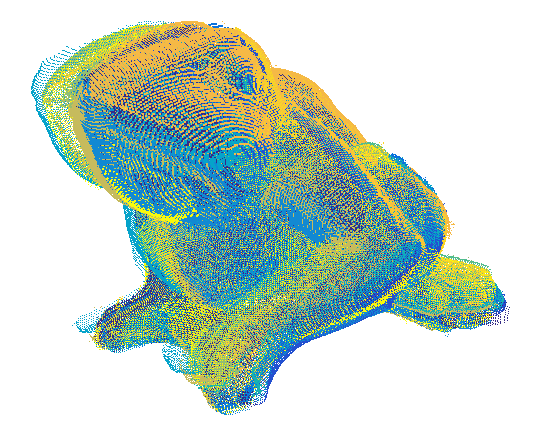}} 
\subfloat[Contraction (EIG)]{	    \label{subfig:FrogConteig}   \includegraphics[width=0.18\textwidth]{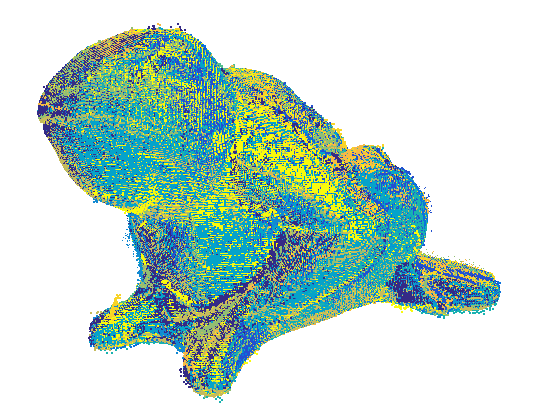}} 
\caption{The ``Frog" model reconstruction result, with the first set of measurements consists of $29\%$ least registration error chosen measurements.}
\label{fig:FrogModelresults}
\end{figure}

\begin{figure}
\centering    
\includegraphics[width=0.9\textwidth]{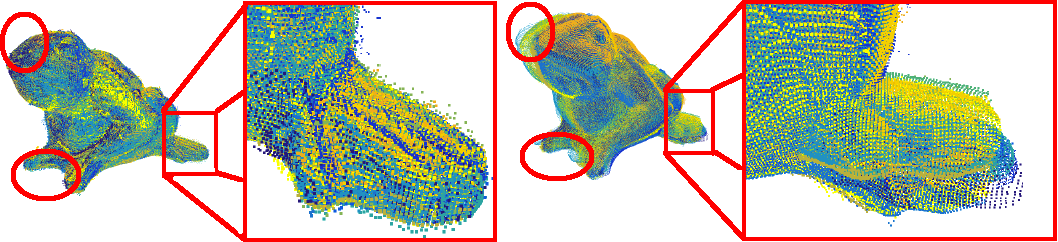}
\caption{Highlight of some of the differences between a good reconstruction obtained by the contraction based method to a bad reconstruction when error $>0.01$.}
\label{fig:FrogModelHighlights}
\end{figure}

 \begin{table}\centering
 \scalebox{0.9}{
  \begin{tabular}{c|cccccc}
    Algorithm   & Contraction (MLE)  			& Separation (MLE)    & PD (MLE) 				  & Spectral 	& Contraction (EIG)  & QD			\\ \hline\hline
    Error on set 1  & \textbf{.001757}   & \textbf{.00176}     & \textbf{.001763}  & $>0.01$   & .003891 		& $>0.01$ \\
    Error on set 2  & \textbf{.001758}   & \textbf{.001759}   & .001816  				  & $>0.01$   & .002807 		& $>0.01$
  \end{tabular}  }
  \caption{Numerical results over the "Frog" model. Set 1 includes $29\%$ of available measurements, set 2 consists of $59\%$ of available measurement. The best result (up to fourth) are highlighted in bold.}
  \label{tab:resultsFrog}
\end{table}

The second example is registration over the ``Skeleton Hand" model of the Georgia Tech large geometric models archive, \url{http://www.cc.gatech.edu/projects/large_models/hand.html}. In this example we have nine scans, each of between $33,000$ to $55,000$ points, where the complete model holds $327,323$ points. We use the same technique of deriving pairwise measurements, as done in previous example. Due to the small amount of scans, we use one optimal set of measurements which consists of $9$ out of the $36$ possible pairwise measurements. The raw scans in this example are somehow more similar than the ones in the frog model, which leads to less variance in error. However, a very similar hierarchy of performance was observed in this example as well, as seen in Table~\ref{tab:resultsHand}. The synchronization via contraction, based on MLE rotations synchronization and with $\lambda=100$, presents the least erroneous result. Contrary to the previous example, the spectral method shows very good results as well. The other two compactification methods use $\lambda=1000$ for contraction with the spectral rotation synchronization (EIG) and $\lambda=100$ for the projection method (PD) with the MLE rotations synchronization. The reconstruction by synchronization via contraction is given visually in Figure~\ref{fig:SkelatonHandModel}.

\begin{figure}
\centering    
\subfloat[Original object]{	 \label{subfig:handFull1}   \includegraphics[width=0.23\textwidth]{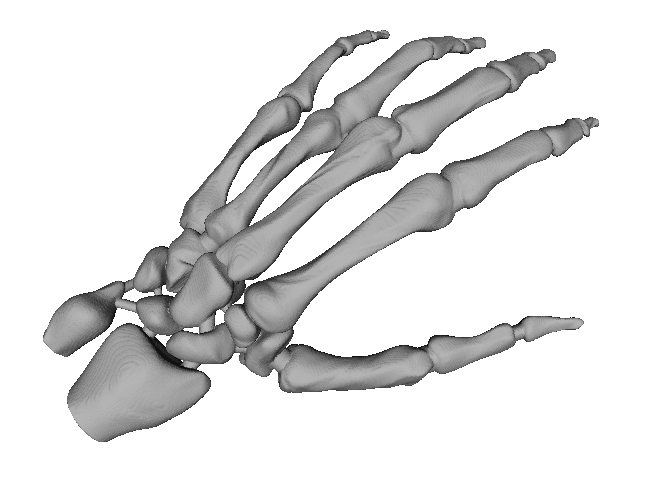}} \qquad
\subfloat[Raw scans]{	\label{subfig:handSamples}    \includegraphics[width=0.33\textwidth]{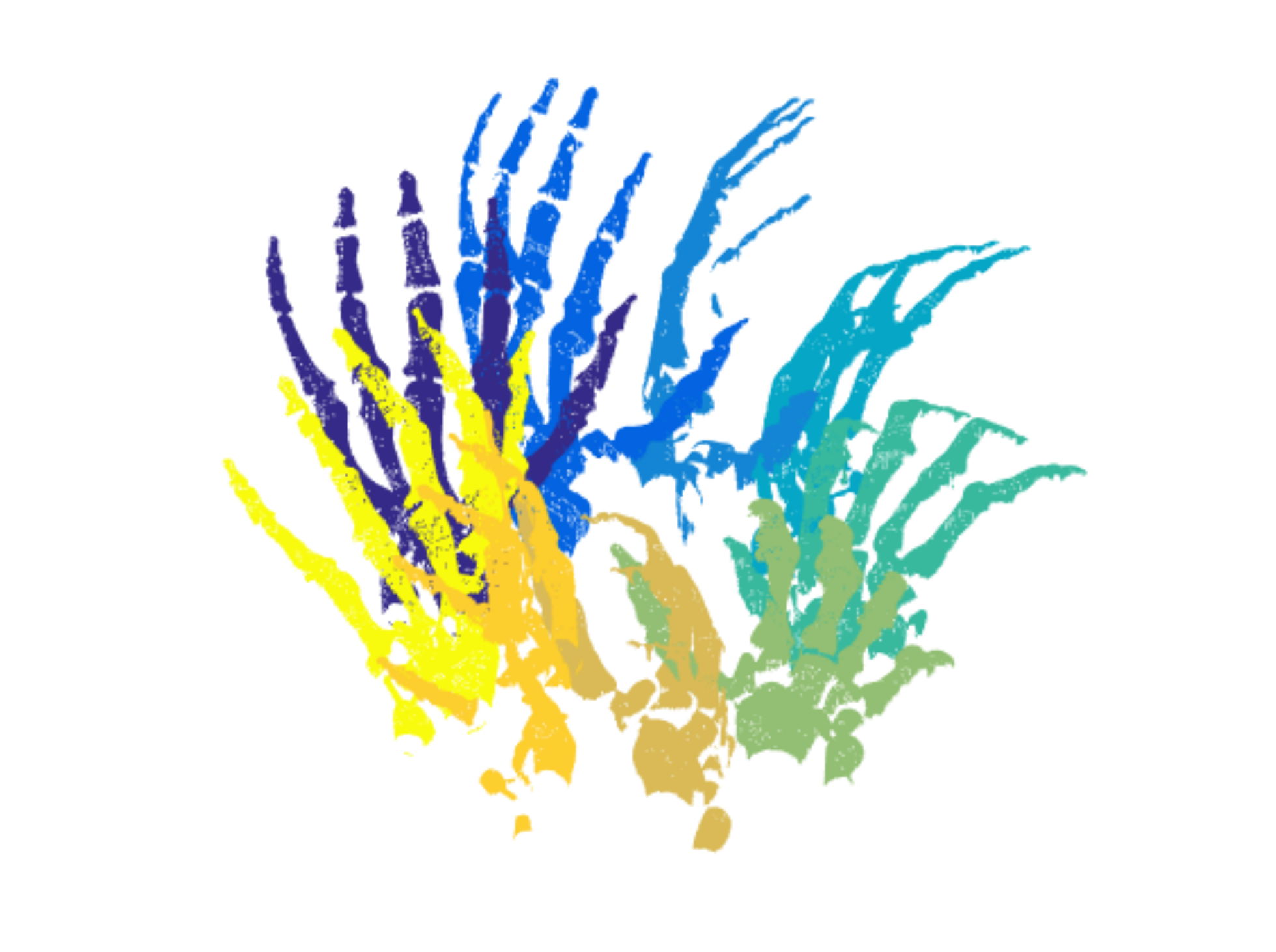}}   
\subfloat[Reconstruction]{	  \label{subfig:handRecons}  \includegraphics[width=0.34\textwidth]{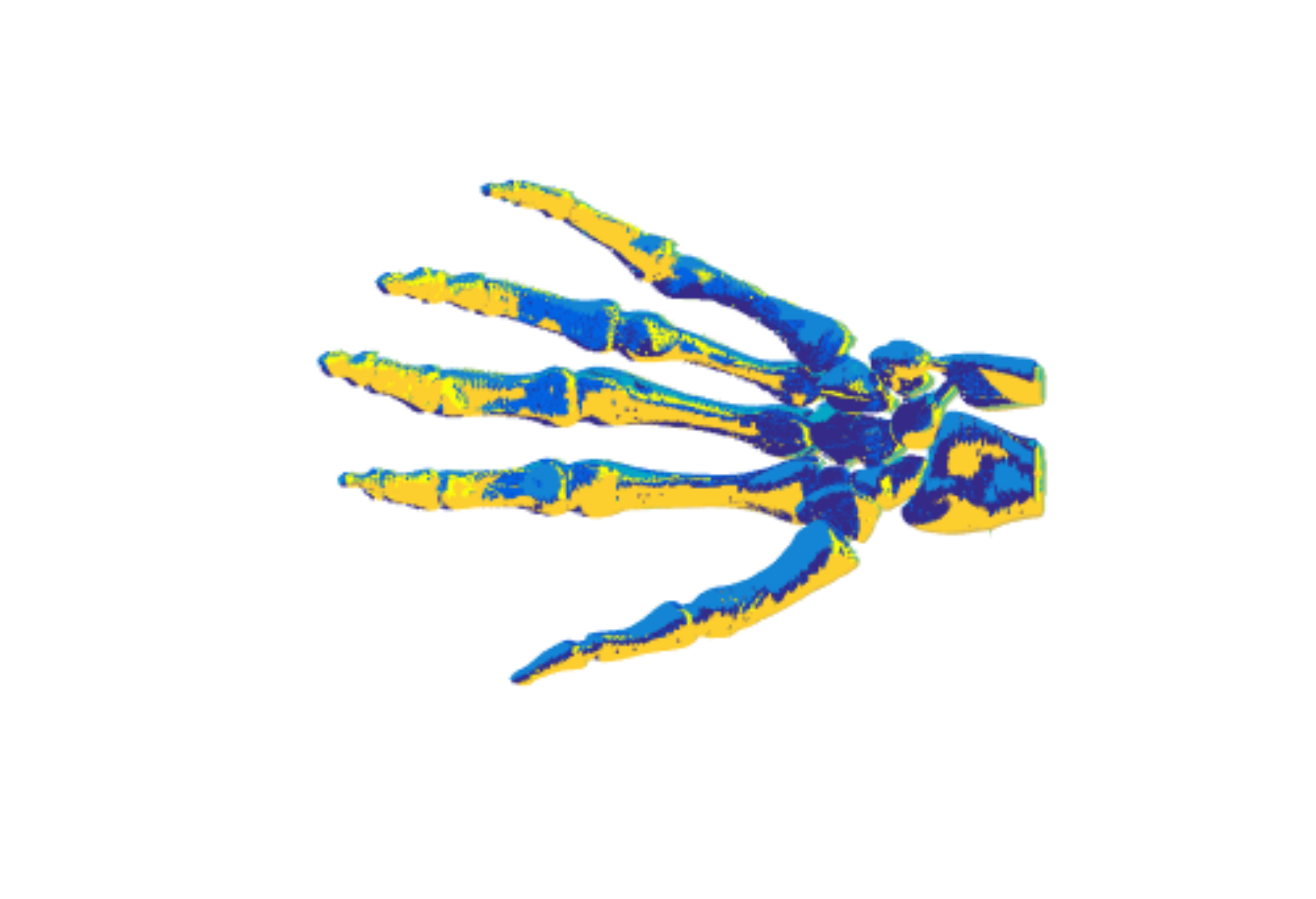}}  
\caption{The ``Skeleton Hand" model, its raw scans and their registration by synchronization via contraction.}
\label{fig:SkelatonHandModel}
\end{figure}

\begin{table}\centering
 \scalebox{0.90}{
  \begin{tabular}{c|cccccc}
    Algorithm & Contraction (MLE)  & Separation (MLE) & PD (MLE) & Spectral & Contraction (EIG) & DQ \\ \hline\hline
    Error & \textbf{.00077} & .00078 & .00078 & .00079 & .00079 & .00135
  \end{tabular} }
  \caption{Empirical results over the "Hand" model.}
  \label{tab:resultsHand}
\end{table}


\section{Conclusion}
This paper focuses on applying a compactification process, that is, a mapping from a non-compact domain into a compact one, for solving synchronization problem. We illustrate this idea using contraction, a tool for compactification drawn from group theory. The contraction mappings enable to transform measurements from a Cartan motion group to an associated compact group. We then efficiently solve the reduced synchronization problem over the compact group, which ultimately provides a solution for synchronization in its original domain. We analyze the properties of synchronization via contraction and show its numerical efficiency in several different data scenarios.

As an extension to this work, it will be interesting to further consider contractions (or any alternative compactification methods) to solve approximation problems on non-compact groups other than synchronization, such as non-unique games \cite{bandeira2015non}, and to form a broader base of knowledge regarding the power of these methods. Furthermore, since compactification methods exist for general Lie groups, these methods might find use for addressing different classes of problems. As another perspective, we hope in future work to extend our analysis to more refined models of noise and data; for example, by considering other statistical measures in $\se(d)$ or by changing the assumptions on the graph structure of the data.


\section*{Acknowledgments}
The authors were partially supported by Award Number R01GM090200 from the NIGMS, FA9550-12-1-0317 and FA9550-13-1-0076 from AFOSR, Simons Foundation Investigator Award and Simons Collaborations on Algorithms and Geometry, and the Moore Foundation Data-Driven Discovery Investigator Award.
The authors thank Amit Bermano for many useful discussions regarding 3D registration. 

\bibliographystyle{plain}
\bibliography{SNC_bib_V2}

\appendix

\section{Complementary proofs} \label{app:proofs_of_mat_projection}

\subsection{Proof of Proposition~\ref{prop:detailed_phi}} \label{app:proof_of_phi_form}

\begin{proof}
Let $T_{\lambda} = \left[ \begin{smallmatrix} \eye_{d} & b / \lambda \\ \mathbf{0}_{1\times d} & 1\end{smallmatrix} \right]$, so that $g_{\lambda} = T_{\lambda}\left[ \begin{smallmatrix} \mu & \mathbf{0}_{d\times 1} \\ \mathbf{0}_{1\times d} & 1\end{smallmatrix} \right]$. We rewrite the SVD of $g_{\lambda}$ in terms of the SVD of $T_{\lambda} = U_{\lambda}\Sigma_{\lambda}V_{\lambda}^T$, i.e., $g_{\lambda} = U_{\lambda}\Sigma_{\lambda}V_{\lambda}^T\left[ \begin{smallmatrix} \mu & \mathbf{0}_{d\times 1} \\ \mathbf{0}_{1\times d} & 1\end{smallmatrix} \right]$. Here, the columns of $U_{\lambda}$ and $V_{\lambda}$ are the eigenvectors of 
\begin{equation} \label{eqn:explicitTT}
T_{\lambda}T_{\lambda}^T = \eye_{d+1} + \left[ \begin{smallmatrix} bb^T/\lambda^2 & b/\lambda \\ b^T/\lambda & 0 \end{smallmatrix}\right] \quad
 \text{ and } \quad T_{\lambda}^TT_{\lambda} = \eye_{d+1} + \left[ \begin{smallmatrix} \mathbf{0}_{d\times d} & b/\lambda \\ b^T/\lambda & b^Tb/\lambda^2 \end{smallmatrix}\right], 
\end{equation}
respectively. 

We observe that the subspace of vectors orthogonal to $b$ is of dimension $d-1$, and is the nullspace of both 
\begin{equation} \label{eqn:nullspaceof}
\left[ \begin{smallmatrix} bb^T/\lambda^2 & b/\lambda \\ b^T/\lambda & 0 \end{smallmatrix}\right] \quad \text{  and  } \quad \left[ \begin{smallmatrix} \mathbf{0}_{d\times d} & b/\lambda \\ b^T/\lambda & b^Tb/\lambda^2 \end{smallmatrix}\right]. 
\end{equation}
Therefore, any set of vectors $\left\{ \left[ \begin{smallmatrix} \nu_i \\ 0 \end{smallmatrix} \right] \right\}_{i=1}^{d-1}$ that forms an orthogonal basis for this subspace is also a set of orthogonal eigenvectors, with the eigenvalue $1$, for $T_{\lambda}T_{\lambda}^T$ and $T_{\lambda}^TT_{\lambda}$. 

To find the two remaining columns of $U_{\lambda}$ and $V_{\lambda}$, we make the ansatz that $\left[\begin{smallmatrix} b/\lambda \\ \alpha \end{smallmatrix}\right]$ and $\left[\begin{smallmatrix} b/\lambda \\ \beta \end{smallmatrix}\right]$ are the two remaining eigenvectors of $T_{\lambda}T_{\lambda}^T$ and $T_{\lambda}^TT_{\lambda}$, respectively. We also remember that the spectrum of the matrices are identical. Indeed, using~\eqref{eqn:explicitTT} we get
\[T_{\lambda}T_{\lambda}^T \left[ \begin{smallmatrix} b/\lambda\\ \alpha \end{smallmatrix}\right] = \left[ \begin{smallmatrix} \left(1 + \alpha + \|b\|_2^2/\lambda^2 \right)b/\lambda\\ \alpha + \|b\|_2^2/\lambda^2 \end{smallmatrix}\right] =  \left(1+\alpha + \|b\|_2^2/\lambda^2 \right) \left[ \begin{smallmatrix} b/\lambda\\  \alpha \end{smallmatrix}\right] , \]
where the last equality holds iff $\alpha^2 + \left(\|b\|_2^2/\lambda^2\right)(\alpha-1) = 0$, that is with the eigenvalues
\[ \alpha_{1,2}  = \frac{-\frac{\|b\|_2^2}{\lambda^2} \pm \sqrt{\frac{\|b\|_2^4}{\lambda^4} + 4\frac{\|b\|_2^2}{\lambda^2}}}{2} . \]
Similarly,
\[ T_{\lambda}^TT_{\lambda} \left[ \begin{smallmatrix} b/\lambda\\ \beta \end{smallmatrix}\right] = \left[ \begin{smallmatrix} \left( \beta+1\right) b/\lambda \\ \left(\|b\|_2^2/\lambda^2+1\right)\beta + \|b\|_2^2/\lambda^2 \end{smallmatrix}\right] = \left(\beta+1\right) \left[ \begin{smallmatrix} b/\lambda\\ \beta \end{smallmatrix}\right] ,\]
with the eigenvalues
\[  \beta_{1,2}  = \frac{\frac{\|b\|_2^2}{\lambda^2} \pm \sqrt{\frac{\|b\|_2^4}{\lambda^4} + 4\frac{\|b\|_2^2}{\lambda^2}}}{2} = -\alpha_{2,1} .\]
Denote the normalized remaining eigenvectors of $U_\lambda$ as
\[ u_{1} = \frac{1}{\sqrt{\|b\|^2/\lambda^2 + \alpha_+^2}} \left[\begin{smallmatrix} b/\lambda \\ \alpha_+ \end{smallmatrix} \right] , \quad u_2 = \frac{1}{\sqrt{\|b\|^2/\lambda^2 + \alpha_-^2}} \left[\begin{smallmatrix} b/\lambda \\ \alpha_- \end{smallmatrix} \right] , \]
and those of $V_\lambda$ as
\[ v_1 = \frac{1}{\sqrt{\|b\|^2/\lambda^2 + \beta_+^2}} \left[\begin{smallmatrix} b/\lambda \\ \beta_+ \end{smallmatrix} \right], \quad v_2= \frac{1}{\sqrt{\|b\|^2/\lambda^2 + \beta_-^2}} \left[\begin{smallmatrix} b/\lambda \\ \beta_- \end{smallmatrix} \right] .\]
As a result, with an orthogonal basis $\left\{ \left[ \begin{smallmatrix} \nu_i \\ 0 \end{smallmatrix} \right] \right\}_{i=1}^{d-1}$ for the null subspace of \eqref{eqn:nullspaceof}, we get
\[ \begin{aligned}[t]
    \Phi_{\lambda}\left(g\right) &= U_{\lambda}V_{\lambda}^T\left[\begin{smallmatrix} \mu & \mathbf{0}_{d\times 1} \\ \mathbf{0}_{1\times d} & 1\end{smallmatrix}\right] \\
																 &= \left(\left[\begin{smallmatrix} \sum_{i = 1}^{d-1} \nu_i \nu_i^T  & \mathbf{0}_{d\times 1}\\ \mathbf{0}_{1\times d} & 0\end{smallmatrix}\right] +  v_1u_1^T +  v_2u_2^T \right)\left[\begin{smallmatrix} \mu & \mathbf{0}_{d\times 1} \\ \mathbf{0}_{1\times d} & 1\end{smallmatrix}\right] \\
																 & = \left[\begin{matrix}  \left( 2\tau_{\lambda}\hat{b}\hat{b}^T + P \right)\mu &\frac{ \tau_{\lambda}}{\lambda}b\\ - \frac{\tau_{\lambda}}{\lambda}b^T\mu & 2\tau_{\lambda} \end{matrix}\right] .
      \end{aligned} \]
Note that a direct calculation shows that the columns of $\Phi_{\lambda}\left(g\right)$ are indeed orthonormal. The explicit form of $\Phi_{\lambda}$ also imples positive determinant of $\Phi_{\lambda}\left(g\right)$ since for blocks $A,B,C,D$, where $D$ is inverible, it holds that \cite{silvester2000determinants}
\[ \det(\left[\begin{matrix}  A \enspace B \\ C \enspace D \end{matrix}\right])= \det(A-BD^{-1}C)\det(D) . \]
In addition, $\lambda,\tau_{\lambda}>0$, $\mu \in \so(d)$, and the spectrum of $P$, as a projection operator, is non-negative. Finally, writing $P = I - \hat{b}\hat{b}^T$ and the claim follows.
\end{proof}

\subsection{Proof of Proposition~\ref{prop:matrix_proj_app_hom}}  \label{app:proof_of_approximated_homomorphism}

\begin{proof}
Starting with the inverse claim of the approximated homomorphism, we have that for $g_{\lambda} = V_{\lambda}\Sigma_{\lambda}W_{\lambda}^T$ we also  get $g_{\lambda}^{-1} = W_{\lambda}\Sigma_{\lambda}^{-1}V_{\lambda}^T$. Therefore,
\[ \Phi_{\lambda}\left(g^{-1}\right) = W_{\lambda}V_{\lambda}^T = \left(V_{\lambda}W_{\lambda}^T\right)^{-1} = \left(\Phi_{\lambda}(g)\right)^{-1} .\]
For the second part of the approximated homomorphism, observe that $\tau_{\lambda}$ of Proposition~\ref{prop:detailed_phi} satisfies (by Taylor)
\begin{equation} \label{eqn:tau_Taylor_expn}
\tau_{\lambda} = \frac{1}{2} + \mathcal{O}\left(\frac{1}{\lambda^2} \right) . 
\end{equation}
Recall that $\{  \nu_i \}_{i=1}^{d-1}$  is an orthogonal basis to the orthogonal complement of $\{b\}$, which implies that $\hat{b}\hat{b}^T + \sum_{i=1}^{d-1}\nu_i\nu_i^T = \eye$. Therefore, 
\[ \left\|2\tau_{\lambda}\hat{b}\hat{b}^T + \sum_{i=1}^{d-1}\nu_i\nu_i^T - \eye \right\|_F = 
\mathcal{O}\left(\frac{1}{\lambda^2}\right) \left\| \hat{b}\hat{b}^T  \right\|_F =
\mathcal{O}\left(\frac{1}{\lambda^2}\right). \] 
In other words, we can denote for simplicity
\begin{equation} \label{eqn:simpleNotation}
\Phi_{\lambda}\left( g \right) = \left[ \begin{matrix} X\mu & \frac{\tau_\lambda}{\lambda}b \\ -\frac{\tau_\lambda}{\lambda}b^T\mu & 2 \tau_\lambda \end{matrix} \right] , \quad X = \eye + S, \quad \norm{S}_F = \mathcal{O}\left(\frac{1}{\lambda^2}\right) .
\end{equation}
We show the leading order of $\norm{\Phi_{\lambda}(g_1g_2) - \Phi_{\lambda}(g_1)\Phi_{\lambda}(g_2)}^2_F$ by dividing the matrix to the four main blocks as
\[  \norm{\left[\begin{matrix}  A_{d \times d} \enspace B_{d \times 1} \\ C_{1 \times d} \enspace D_{1 \times 1} \end{matrix}\right]}_F^2 =  \norm{A}_F^2+\norm{B}_2^2 + \norm{C}_2^2 + D^2 . \]
We use \eqref{eqn:tau_Taylor_expn} and the notation in \eqref{eqn:simpleNotation} with subscripts. The upper block yields
\begin{align*}
\norm{X_{12}\mu_1\mu_2 - X_1 \mu_1 X_2 \mu_2 + \frac{1/4 + \mathcal{O}\left(\frac{1}{\lambda^2}\right)}{\lambda^2}b_1b_2^T \mu_1 }_F & =   \\
\norm{ \left( \eye + S_{12} \right) \mu_1- \left( \eye + S_{1} \right) \mu_1 \left( \eye + S_{2} \right)  + \mathcal{O}\left(\frac{1}{\lambda^2}\right) b_1b_2^T\mu_1\mu_2^T }_F & = \\
\norm{S_{12}\mu_1 - S_{1}\mu_1 +  \mu_1 S_{2} + S_1\mu_1 S_{2} + \mathcal{O}\left(\frac{1}{\lambda^2}\right)b_1b_2\mu_1\mu_2 }_F & = \mathcal{O}\left(\frac{1}{\lambda^2}\right) .
\end{align*}
The last equality is since the different summands are all matrices of leading order $\mathcal{O}\left(\frac{1}{\lambda^2}\right)$. The second block (last column upper part) is
\[ \frac{1/2 + \mathcal{O}\left(\frac{1}{\lambda^2}\right)}{\lambda} \left( \mu_1b_2+b_1\right) - \frac{1/2 + \mathcal{O}\left(\frac{1}{\lambda^2}\right)}{\lambda}X_1\mu_1 b_2+ 2\frac{1/4 + \mathcal{O}\left(\frac{1}{\lambda^2}\right)}{\lambda}b_1 . \]
Thus, the leading order is $\norm{\frac{1}{2\lambda}\left(\eye - X_1\right) \mu_1 b_2 }_2 = \mathcal{O}\left(\frac{1}{\lambda^3}\right)$. Similarly, the third block is
\[ -\frac{1/2 + \mathcal{O}\left(\frac{1}{\lambda^2}\right)}{\lambda} \left( b_1^T \mu_1 \mu_2 \right) + \frac{1/2 + \mathcal{O}\left(\frac{1}{\lambda^2}\right)}{\lambda} b_1^T \mu_1 X_2\mu_2 + 2\frac{1/4 + \mathcal{O}\left(\frac{1}{\lambda^2}\right)}{\lambda}b_2^T \mu _2 . \]
The matrix $\mu_2$ is canceled out and  the leading order is again $\norm{\frac{1}{2\lambda} b_1^T\mu_1 \left(\eye - X_2\right) }_2 = \mathcal{O}\left(\frac{1}{\lambda^3}\right)$. Finally, for the last scalar block we have
\[ 2\left(1/2 + \mathcal{O}\left(\frac{1}{\lambda^2}\right) \right) + \frac{1/2 + \mathcal{O}\left(\frac{1}{\lambda^2}\right)}{\lambda^2} b_1^T \mu_1 b_2 - 4\left(1/4 + \mathcal{O}\left(\frac{1}{\lambda^2}\right) \right) = \mathcal{O}\left(\frac{1}{\lambda^2}\right) . \]
\end{proof}

\subsection{Proof of Proposition~\ref{prop:distance_between_const_functions}}  \label{app:proof_of_norms_dist_bound}

To prove Proposition~\ref{prop:distance_between_const_functions} we use two auxiliary Lemmas.
\begin{lemma} \label{lemma:orthogonal_mat_dist}
Suppose $Q_1,Q_2$ and $P_1,P_2$ are orthogonal matrices, such that 
\[ \norm{Q_1-Q_2}_F\le\varepsilon_1, \quad \text{ and }, \quad \norm{P_1-P_2}_F\le\varepsilon_2. \]
Then,
\[ \norm{Q_1P_1-Q_2P_2}_F \le \varepsilon_1+\varepsilon_2 .\]
\end{lemma}
\begin{proof}
We use the orthogonality invariance of the Frobenius norm and the triangle inequality to get
\[ \begin{aligned}[t]
    \norm{Q_1P_1-Q_2P_2}_F        &=  \norm{I-P_1^T Q_1^T Q_2P_2}_F                                \\
            &=\norm{I+P_1^T \left( I-Q_1^T Q_2\right) P_2 -  P_1^T P_2 }_F  \\
            &\le \norm{I- P_1^T P_2} + \norm{P_1^T \left( I-Q_1^T Q_2\right) P_2  }_F  \\
            & = \norm{P_1-P_2}_F + \norm{Q_1-Q_2}_F \le \varepsilon_1 + \varepsilon_2 .
      \end{aligned} \]
\end{proof}

The next lemma bounds the distance between two exponential maps with respect to the original distance in the algebra. The two distances are linked by a term that involves the commutator (which is also related to the curvature of the Lie group manifold).
\begin{lemma} \label{lemma:exp_dist_bound}  
Let $p_1,p_2 \in \mathfrc{g} $, and let $\lambda$ be sufficiently large such that 
\[ \exp(p_1/\lambda), \quad \exp(p_2/ \lambda), \text{  and  }\exp(p_1/ \lambda)\left(\exp(p_2/\lambda)\right)^{-1} \] 
are within the injectivity radius around the identity. Then,
\[ \norm{\exp(p_1/\lambda)-\exp(p_2/\lambda)} \le \norm{p_1-p_2} + C\left( \lambda^{-2} \right) ,     \]
where the constant $C$ depends on the commutator $\comm{p_1}{p_2}$ (and thus can be bound by $ \max{\norm{p_1},\norm{p_2}}$) but independent on $\lambda$.
\end{lemma}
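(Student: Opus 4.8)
The plan is to exploit the orthogonality of the group representation to collapse the two-exponential difference into a distance from the identity, and then to merge the two exponentials into a single one via the Baker--Campbell--Hausdorff (BCH) formula, exactly as suggested in the remark following Corollary~\ref{cor:clean_measure_analysis}. First I would use that each $\exp(p_i/\lambda)$ is identified with an orthogonal matrix (for $p_i \in \mathfrc{g}$ in the compact case, $\rho(p_i/\lambda)$ is skew symmetric and its exponential is orthogonal), so the Frobenius norm is invariant under right multiplication by $\left(\exp(p_2/\lambda)\right)^{-1} = \exp(-p_2/\lambda)$:
\[ \norm{\exp(p_1/\lambda)-\exp(p_2/\lambda)} = \norm{\exp(p_1/\lambda)\exp(-p_2/\lambda) - I}. \]
The hypothesis that $\exp(p_1/\lambda)\left(\exp(p_2/\lambda)\right)^{-1}$ lies within the injectivity radius around the identity guarantees both that this product has a single well-defined logarithm near $0$ and that the BCH series converges.

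Next I would apply BCH to write $\exp(p_1/\lambda)\exp(-p_2/\lambda) = \exp(Z)$ with
\[ Z = \frac{p_1-p_2}{\lambda} - \frac{1}{2\lambda^2}\comm{p_1}{p_2} + \mathcal{O}(\lambda^{-3}), \]
where the higher-order terms are nested commutators in $p_1$ and $p_2$. Then, from the Taylor expansion $\exp(Z)-I = Z + Z^2/2 + \cdots$ together with submultiplicativity of the Frobenius norm, one obtains $\norm{\exp(Z)-I} \le \norm{Z} + \mathcal{O}(\norm{Z}^2)$.

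It remains to collect orders in $\lambda$. Since $\norm{Z} \le \norm{p_1-p_2}/\lambda + \frac{1}{2\lambda^2}\norm{\comm{p_1}{p_2}} + \mathcal{O}(\lambda^{-3})$, we have $\norm{Z} = \mathcal{O}(\lambda^{-1})$ and hence $\norm{Z}^2 = \mathcal{O}(\lambda^{-2})$, giving
\[ \norm{\exp(p_1/\lambda)-\exp(p_2/\lambda)} \le \frac{\norm{p_1-p_2}}{\lambda} + C\lambda^{-2}, \]
with $C$ controlled by $\norm{\comm{p_1}{p_2}}$ and $\norm{p_1-p_2}^2$ (hence bounded in terms of $\max\{\norm{p_1},\norm{p_2}\}$) and independent of $\lambda$. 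Because $\lambda \ge 1$, the leading factor satisfies $\norm{p_1-p_2}/\lambda \le \norm{p_1-p_2}$, which yields the stated bound.

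The hard part will be justifying rigorously that the BCH remainder and the quadratic Taylor tail are genuinely $\mathcal{O}(\lambda^{-2})$ with a constant depending only on the commutator and on $\norm{p_1},\norm{p_2}$, and not on $\lambda$. This is precisely where the injectivity-radius assumption does the work: it both legitimizes passing to the single logarithm $Z$ and keeps $\norm{Z}$ uniformly of order $\lambda^{-1}$, so that the commutator tails and the exponential's higher-order terms sum to quantities with $\lambda$-independent bounds.
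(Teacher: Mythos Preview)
Your argument is correct and follows the same overall skeleton as the paper: reduce to $\norm{\exp(p_1/\lambda)\exp(-p_2/\lambda)-I}$, apply BCH to collapse the product into a single exponential $\exp(Z)$ with $Z=(p_1-p_2)/\lambda+\mathcal{O}(\lambda^{-2}\comm{p_1}{p_2})$, and then bound the result using $\lambda\ge 1$.

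The one genuine difference is in the last step. You bound $\norm{\exp(Z)-I}$ directly by the Taylor tail $\norm{Z}+\mathcal{O}(\norm{Z}^2)$ via submultiplicativity, which is completely elementary but produces an extra $\norm{p_1-p_2}^2/\lambda^2$ contribution in the constant $C$ (you correctly note this and absorb it into $\max\{\norm{p_1},\norm{p_2}\}$). The paper instead invokes the bi-invariant Riemannian metric on the compact group: it observes that the Euclidean chord $\norm{\exp(Z)-I}$ is bounded by the geodesic distance $d_R(\exp(Z),I)=\norm{Z}$, so no quadratic tail appears and the constant $C$ depends purely on the commutator. Your route is more self-contained (no Riemannian geometry needed); the paper's route gives a slightly sharper constant with less bookkeeping. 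Both are valid and lead to the stated lemma.
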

\begin{proof}
Since $\Gb$ is compact, the Lie group exponential map is surjective and there is $q\in \mathfrc{g} $ such that $\exp(q) = \exp(p_1/\lambda)\left(\exp(p_2/\lambda)\right)^{-1}$. Now, the compactness also implies that $\Gb$ admits a bi-invariant metric, see e.g., \cite[Chapter 2]{alexandrino2015lie}, and thus the induced Riemannian metric $d_R(\cdot,\cdot)$ satisfies
\[    d_R(\left( \exp(p_1/\lambda),\exp(p_2/\lambda) \right)  =   d_R \left( \exp(q) ,I \right) =   \norm{q}        . \]
However, from Baker-Campbell-Hausdorff (BCH) formula we get
\[ q = \frac{1}{\lambda}(p_1-p_2) + \mathcal{O}\left( \comm{\frac{p_1}{\lambda}}{\frac{p_2}{\lambda}} \right) . \]
The result of the lemma follows since the intrinsic (geodesic) Riemannian distance bounds the Euclidean distance in the embedded space, and since $\frac{1}{\lambda} \le 1$, that is
 \[ \norm{\exp(p_1/\lambda)-\exp(p_2/\lambda)} \le  d_R \left( \exp(p_1/\lambda),\exp(p_2/\lambda) \right)  \le \norm{p_1-p_2} + \frac{1}{\lambda^2}\mathcal{O}\left(\norm{\comm{p_1}{p_2}}\right) . \]
\end{proof}

We now turn to prove the proposition.
\begin{proof}[Proof of Proposition~\ref{prop:distance_between_const_functions}]
By the approximated homomorphism \eqref{eqn:app_homo} we have 
\[ \norm{\Psi_\lambda(g_1)\Psi_\lambda(g_2)^{-1}-\Psi_\lambda(g)}_F = \norm{\Psi_\lambda(g_1)\Psi_\lambda(g_2^{-1})-\Psi_\lambda(g)}_F .\]
Then, using the second part of \eqref{eqn:app_homo} and triangle inequality we get
\[ \begin{aligned}[t]
    \norm{\Psi_\lambda(g_1)\Psi_\lambda(g_2^{-1})-\Psi_\lambda(g)}_F  &=  \norm{\Psi_\lambda(g_1)\Psi_\lambda(g_2^{-1})-\Psi_\lambda(g_1g_2^{-1})+\Psi_\lambda(g_1g_2^{-1})-\Psi_\lambda(g)}_F  \\
            &\le \mathcal{O}(\lambda^{-2})+ \norm{\Psi_\lambda(g_1g_2^{-1})-\Psi_\lambda(g)}_F                .
      \end{aligned} \]
Denote the decompositions $g_1g_2^{-1} = (k,v),g= (\bar{k},\bar{v}) \in \Gc$ and by \eqref{eqn:cartan_contraction} 
\[ \norm{\Psi_\lambda(g_1g_2^{-1})-\Psi_\lambda(g)}_F =\norm{ \exp(\lambda v) \cdot k-\exp(\lambda \bar{v}) \cdot \bar{k}}_F . \]
Now, recall that $k,\exp(\lambda v),\bar{k},\exp(\lambda \bar{v}) \in \Gb$ and thus they are represented by orthogonal matrices. Apply Lemma~\ref{lemma:orthogonal_mat_dist},
\[ \norm{\Psi_\lambda(g_1g_2^{-1})-\Psi_\lambda(g)}_F \le \norm{k -\bar{k}}_F + \norm{\exp(\lambda v)- \exp(\lambda \bar{v})}_F .\]
Assume the elements are close enough so we can apply Lemma~\ref{lemma:exp_dist_bound} to finally obtain
\[ \norm{\Psi_\lambda(g_1)\Psi_\lambda(g_2)^{-1}-\Psi_\lambda(g)}_F \le \norm{k -\bar{k}}_F + \norm{v- \bar{v}}_F + \mathcal{O}(\lambda^{-2}). \] 
\end{proof}

\subsection{Proof of Proposition~\ref{prop:noise_analysis}}  \label{app:proof_noise_analysis}

\begin{proof}  
We follow the analysis of the phase transition point in the spectral method, as done in \cite[Chapter 5.3]{boumal2014thesis} for the case of a full set of measurements. This proof uses the estimation of the maximal eigenvalue of random block matrix in \cite{girko1996matrix}. In essence, $M=RWR^T$ and $W$ share the same full rank spectrum. Therefore, it is easier to approach the largest eigenvalues of the data matrix $M$ by analyzing $W$. In particular, setting $W_{ii} = 0$ and since $M$ is a symmetric matrix, we also have that $Y= W - \E{W}$ is a symmetric block matrix, with zero mean and $Y_{ii}=0$. These conditions together with the fact that $W$ has a fixed Frobenius norm ($W_{ij}$ are orthogonal matrices) give the general structure needed to apply the results in \cite{girko1996matrix} for estimating the largest eigenvalue in $Y$. There are two specific conditions to show. First is the independence between the blocks which we have from the independence assumptions on $a_{ij}$ and $\upsilon_{ij}$. Second is the expectation over $W_{ij}$. To calculate this expectation, we use $f_V$ and $f_K$ and define a joint PDF $f$ based upon the global Cartan decomposition $\Gb \cong P\cdot K$, 
\[ f(g) = f_V(\log(p))f_K(k), \quad g = p\cdot k \in \Gb .\]
Note that $K$ acts transitively on $P$ so $f$ is well-defined. The coset $X=\Gb/K$ is a symmetric homogeneous space equipped with a Riemannian metric so integration over $X$ is well-understood, and we can use it to separate the integration on $\Gb$. Also, $f$ is separable by definition and smooth. We use the Haar measure and the fact that $\Gb$ is compact to have
\[  \E{W_{ij}} = \int_{\Gb} g f(g) d\mu(g) = \int_{X} p f_V(\log(p)) d\mu(p) \int_{K} k f_K(k) d\mu(k) .\]
The most right integral is equal by assumption to $\left[ \begin{smallmatrix} \E{\upsilon_{ij}} & \mathbf{0}_{d\times 1} \\ \mathbf{0}_{1\times d} & 1\end{smallmatrix} \right] $. For the second integral, we change coordinates to have
\[  \int_{X} p f_V(\log(p)) d\mu(p) = \int_{\norm{v}\le \pi} \exp(M_v) f_V(v) \norm{J(v)}^{-1}  d\mu(v) .\]
Here $v$ in $f_V(v) $ is understood as a vector and inside the exponential we have it as a matrix using $M_x = \left[ \begin{smallmatrix} 0 & -x^T \\ x & 0\end{smallmatrix} \right] $. The injectivity radius which we use for this case is followed by \eqref{eqn:lambda_cond}. For our case of the sphere, since it is a rank one coset, we have an explicit form for the Jacobian, $(I-vv^t)^\frac{1}{2}$, see e.g., \cite[Chapter 9, p. 391]{gilmore2012lie}. To simplify the exponential we use the result obtained in \ref{apx:CartanCalculation} to deduce that only three terms are involved in $\exp(v)$, that are $I$, $v$ and $v^2$. Namely, we end up with
\[  \int_{\norm{v}\le \pi} \left( I + \sin(\norm{v})M_{v/ \norm{v}} +(1-\cos(\norm{v}))M_{v/ \norm{v}}^2  \right) f_V(v) \norm{(I-vv^t)^{-\frac{1}{2}}}  d\mu(v)  , \]
The weight function $g(v) = f_V(v)\norm{(I-vv^t)^{-\frac{1}{2}}}$ is even with $v$, which leaves us with only two terms, the identity and the diagonal part of $v^2$ (the parts that include the quadratic terms) as all other terms lead to odd Integrands over a symmetric domain around the identity. Therefore, the integral is
\[ \left( \int_{\norm{v}\le \pi} g(v)  d\mu(v) \right) I      
- \int_{\norm{v}\le \pi}   \frac{(1-\cos(\norm{v}))}{\norm{v}^2}  \left[ \begin{smallmatrix} I \circ vv^T & 0  \\ 0 & \norm{v}^2 \end{smallmatrix} \right]  g(v)  d\mu(v) 
= \gamma I - \left[ \begin{smallmatrix} \alpha_1  I & 0  \\ 0 & \alpha_2 \end{smallmatrix} \right] . \]
To conclude, the expectation of each noise block is a diagonal matrix of the form
 \[  \E{W_{ij}} = \left[ \begin{smallmatrix} (\gamma-\alpha_1)\beta I_d & \mathbf{0}_{d\times 1} \\ \mathbf{0}_{1\times d} & \gamma-\alpha_2\end{smallmatrix} \right] ,\]
where $\gamma$, $\alpha_1$ and $\alpha_2$ as described in the statement of the proposition. Note that although $\E{W_{ij}}$ not a scalar matrix (the last entry on the diagonal is not necessary equals to the others), the conclusions about the largest eigenvalues $Y$ from \cite[Chapter 5.3]{boumal2014thesis} are still valid.
\end{proof}

\section{Supplements} \label{app:Supplements}

\subsection{The spectral method -- pseudocode algorithm}

The spectral method is given as a pseudocode in Algorithm~\ref{alg:eigenvector_method}. For more details on this algorithm see~\cite{singer2011angular}. A Matlab implementation for the spectral method, for both $\so(d)$ and $\on(d)$, is available online in \url{https://github.com/nirsharon/Synchronization-over-Cartan-Motion-Groups}.

\begin{algorithm}[ht]
\caption{Spectral method for data on a semisimple compact group $G$}
\label{alg:eigenvector_method}
\begin{algorithmic}[1]
\REQUIRE  Ratio measurements $\{ g_{ij} \}_{(i,j) \in E} $. \\ Measurement confidence weights $\{ w_{ij} \}_{(i,j) \in \mathcal{E}} $.  \\ Non-trivial orthogonal faithful representation $\rho$ of order $d$ over $G$. \\ 
\ENSURE An approximated solution for $\{g_i\}_{i=1}^n$.

\STATE Initialize $M$ and $D$ as $dn \times dn$ zeros matrices.
\FOR{$(i,j) \in E$}    
\STATE  Define block $M_{ij} \gets w_{ij} \rho\left({g}_{ij}\right) $.  
\FOR{$\{ \ell \mid (i,\ell) \in E \}$}    
\STATE  Define block $D_{ii} \gets D_{ii}+w_{i \ell} \eye_d$.  
\ENDFOR    
\ENDFOR 
\STATE  $H \gets D^{-1}M$.  
\STATE  $ \mu \gets\operatorname{EIG}(H,d)$. \COMMENT{Extract top $d$ eigenvectors}  
\FOR{$i=1,\ldots,n$}    
\STATE $b_i \gets \operatorname{BLOCK}(\mu,d,i) $. \COMMENT{The $i^\text{th}$block} 
\STATE  $ \mu_i \gets \rd(b_i) $.  \COMMENT{Rounding procedure} 
\ENDFOR    
\RETURN $\{  \rho^{-1}(\mu_i) \}$, $i=1,\ldots,n$.
\end{algorithmic}
\end{algorithm}

\subsection{Calculating the Cartan decomposition} \label{apx:CartanCalculation}

To calculate $\Psi_\lambda^{-1}$ we have to reveal the Cartan decomposition of an element from $\Gb$. In the algebra level, the Cartan decomposition is straightforward since it induced from a direct sum of linear spaces. However, in the group level this calculation is less trivial. To make the discussion concrete, consider $\Gb=\so(d+1)$ and denote by $Q$ the element in $\Gb$ to be decomposed. Then, the Cartan decomposition consists of two matrices $P,R \in \so(d+1)$ such that 
\begin{equation}  \label{eqn:CartanRotation}
 Q = PR, \quad P = \exp(p), \quad p \in \mathfrc{p} , \quad R = \left[ \begin{smallmatrix} \mu & 0 \\ \mathbf{0}_{1\times d} & 1\end{smallmatrix} \right] , \quad \mu\in \so(d)  . 
\end{equation}
Here $ \mathfrc{p}$ is the subspace given from the Cartan decomposition in the algebra level. Let $q = \log(Q)$ be the correspondence Lie algebra element of $Q$, and denote its Cartan decomposition by 
\[ q = q_t+q_p, \quad q_t \in \mathfrc{t}, \quad q_p \in \mathfrc{p} . \] 
The difficulty in decompose at group level is that in general, $q_t \neq r = \log(R)$ and similarly $q_p \neq p$, since by BCH 
\begin{equation}  \label{eqn:BCH_4_cartan}
 q_t+q_p = p + r + \frac{1}{2}[p,r] + \frac{1}{12} \left( [p,[p,r]]+[r,[r,p]]\right) + \ldots .  
\end{equation}
From the other hand, it is not enough to work only in the group level and decompose $Q$ as $ \widehat{P} \left[ \begin{smallmatrix} \widehat{\mu} & 0 \\ \mathbf{0}_{1\times d} & 1\end{smallmatrix} \right]$ (this can be done, for example, by applying Givens rotations to $Q$) does not guarantee that $\log{\widehat{P}} \in \mathfrc{p} $. 

For the special case of measurement in $\se(d)$ and the contraction map there, the most efficient approach for attaining the Cartan decomposition of a given rotation is based upon a generalization of the Rodrigues formula. In particular, we can exploit the special structure of 
\[ p = \left[ \begin{smallmatrix} 0 & -b^T \\ b & 0\end{smallmatrix} \right] \in \mathfrc{p} , \]
where $b \in \mathbb{R}^d$, to have the Rodrigues formula for any $d$ (and not just for $P=\exp(p)$ of order $3$). By \eqref{eqn:eigenvalueofB} we learn that the only nonzero eigenvalues of $p$ are $\pm i \theta$ with $\theta = \norm{b}$. Therefore, inspired from the proof of the Rodrigues formula in \cite{gallier2003computing}, the key observation is that the normalized $p_\theta = \frac{1}{\theta}p$ satisfies $p_\theta^3 = -p_\theta$. This leads to the more general relations $p_\theta^{2m+j} = (-1)^m p_\theta^j$ for $j=1,2$ and $m \in \mathbb{N}$, which imply the Rodrigues formula
\begin{eqnarray*}
 \exp(p) &=& \sum_{n=0}^\infty \frac{p^n}{n!} = I + \frac{\theta  p_\theta}{1!} + \frac{\theta^2  p_\theta^2}{2!} + \ldots \\
 &=& I + \left(  \frac{\theta}{1!}-\frac{\theta^3}{3!}+\frac{\theta^5}{5!} + \ldots \right) p_\theta+ \left(  \frac{\theta^2}{2!}-\frac{\theta^4}{4!}+ \ldots \right) p_\theta^2 \\
  &=& I + \sin(\theta)p_\theta + (1-\cos(\theta))p_\theta^2 .
\end{eqnarray*}
The block structure of $p$ indicates that $p_\theta$ and $p_\theta^2$ do not have any common places of nonzero blocks. Therefore, the first $d$ entries in the last column of $\exp(p)$ are exactly as in $\sin(\theta)p_\theta$ and the last entry (on the diagonal) is affected by $I$ and $p_\theta^2$ and equals to $1+(1-\cos(\theta))(p_\theta^2)_{d+1,d+1}$, which is actually just $\cos(\theta)$ since the normalization implies $(p_\theta^2)_{d+1,d+1}=-1$. The last column is of particular interest as by \eqref{eqn:CartanRotation} we deduce that the last columns of $\exp(p)$ and $Q$ are identical.

We summarize the above in Algorithm~\ref{alg:CartanRotation}. Note that we assume $\theta = \cos^{-1}(Q_{d+1,d+1})<\pi$ as $\theta=\pi$ implies we are on the boundary of the diffeomorphism domain of the exponential map and there is no unique decomposition (which indicates that $\lambda$ was wrongly chosen initially).
\begin{algorithm}[ht]
\caption{Global Cartan decomposition for special orthogonal matrices}
\label{alg:CartanRotation}
\begin{algorithmic}[1]
\REQUIRE  The matrix $Q \in \so(d+1)$. 
\ENSURE The matrices $P$ and $R$ of the Cartan decomposition $Q=PR$ of \eqref{eqn:CartanRotation}.
\STATE $\theta \gets \cos^{-1}\left(Q(d+1,d+1)\right)$
\IF{$0<\theta<\pi$} 
\STATE {$ b \gets \lambda \frac{\theta}{\sin(\theta)} Q(1:d\, ,d+1)$} 
\ELSIF{$\theta = 0$} 
\STATE{$b \gets 0$} 
\ENDIF
\RETURN $ P = \exp(\left[ \begin{smallmatrix} \mathbf{0}_{d\times d} & b \\ -b^T & 0\end{smallmatrix} \right])$, $R = P^TQ$.
\end{algorithmic}
\end{algorithm}

One other general approach for the Cartan decomposition is to approximate $p$ and $r$ using the observation that the right hand side of \eqref{eqn:BCH_4_cartan} can be divided into two types of terms: ones from $\mathfrc{p}$, that are $p,\frac{1}{2}[p,r] ,\frac{1}{12} [r,[r,p]],\ldots$ and the other terms from $\mathfrc{t}$. Then, truncated BCH series can be applied which also requires solving a non-linear polynomial equation in either $r$ or $p$, see e.g., \cite{earp2005constructive}. 

Finally, a more practical approach for the general case would be to simply optimize the decomposition. In details, one can define an optimization on the linear subspace $\mathfrc{p}$, and minimize a loss function in the least squares fashion in the vicinity of the zero vector (that is a neighborhood of the identity in the group level). In this neighborhood, there is a unique solution which is guaranteed by the unique global Cartan decomposition. Given an element to decompose $Q$ on the matrix group $\Gb$, an optimization problem can be defined as
\begin{equation} \label{eqn:cartan_opt}
\begin{aligned}
& \underset{{p \in \mathfrc{p} }}{\text{minimize}}
& & \left \| U_1 \exp(-p)Q U_2 \right \|_F^2 \\
\end{aligned}
\end{equation}
where $U_1$ and $U_2$ are matrices that restrict the product $\exp(-p)Q $ to the structure of matrices from $K$. For example, for $\so(d+1)$ we need to zero out the last column, and so one chooses $U_2$ to be the unit vector in $\mathbb{R}^{d+1}$ with one in its last entry, and $U_1 = \left[ \begin{smallmatrix} I_{d\times d} & 0\\ 0 & 0\end{smallmatrix} \right]$. For the case of contraction map over $\mmg(d, \ell)$, we have $\Gb=\on(d+\ell)$ (see Example~\ref{exm:MMG} and Section~\ref{subsec:MMG}) and so we can use $U_1 = \left[ \begin{smallmatrix} I_{\ell \times \ell} & 0_{\ell \times d } \end{smallmatrix} \right] $ and $U_2 = \left[ \begin{smallmatrix} 0_{\ell \times d } \\  I_{d \times d} \end{smallmatrix} \right]  $.
 
By extracting the gradient of the cost function in \eqref{eqn:cartan_opt} there are efficient gradient-based solvers to use for \eqref{eqn:cartan_opt}, such as the trust region algorithm. Testing this approach numerically, the solution $p^\ast$ is typically found, to double precision, in about $20$ iterations. Note that we defined \eqref{eqn:cartan_opt} as unconstrained problem, while we really want to obtain a solution inside $\norm{p} \le \pi$ (to ensure $p$ is inside the injectivity radius of the exponential, see \eqref{eqn:lambda_cond}). Such a solution zeroes out the cost function and thus local minima (if exist) can be avoided. Typically, by starting with an initial guess around the identity, a valid solution is attended. However, to guarantee that, one might use a constraint optimization such as interior point algorithm. A Matlab implementation of both Algorithm~\ref{alg:CartanRotation} and the Cartan decomposition via optimization are available online in \url{https://github.com/nirsharon/Synchronization-over-Cartan-Motion-Groups}.

\end{document}